\newtheorem{thm}{Theorem}[section]
\newtheorem{prop}[thm]{Proposition}
\newtheorem{lem}[thm]{Lemma}
\newtheorem{lemma}[thm]{Lemma}
\newtheorem{cor}[thm]{Corollary}
\newtheorem{cl}[thm]{Claim}
\newtheorem{dd}[thm]{Definition}
\newtheorem{definition}[thm]{Definition}
\newtheorem{rmk}[thm]{Remark}
\newtheorem{remark}[thm]{Remark}
\newtheorem*{thm*}{Theorem}
\newtheorem*{cor*}{Corollary}
\newtheorem*{prop*}{Proposition}
\newtheorem*{bartwistedcomplex}{Proposition \ref{th:bartwistedcomplex}}
\newtheorem*{rankmasterinequalities}{Proposition \ref{th:rankmasterinequalities}}
\newcommand{\e}{\epsilon}
\newcommand{\Z}{\mathbb{Z}}
\newcommand{\A}{\mathcal{A}}
\renewcommand{\d}{\delta}
\newcommand{\J}{\mathcal{J}}
\renewcommand{\H}{\mathcal{H}}
\newcommand{\R}{\mathbb{R}}
\newcommand{\CC}{\mathcal{C}}
\newcommand{\C}{\mathbb{C}}
\newcommand{\M}{\mathcal{M}}
\newcommand{\D}{\mathcal{D}}
\newcommand{\Fuk}{\mathcal{F}uk}
\newcommand{\F}{\mathcal{F}}
\newcommand{\Diff}{\mathit{Diff}}
\newcommand{\fib}{\mathit{fib}}
\newcommand{\Lk}{\mathcal{L}_k}
\newcommand{\s}{\sigma}
\newcommand{\bdm}{\begin{displaymath}}
\newcommand{\edm}{\end{displaymath}}
\newcommand{\bq}{\begin{equation}}
\newcommand{\eq}{\end{equation}}
\numberwithin{equation}{section}
\title{Dehn twists and free subgroups  \\ of symplectic mapping class groups}
\author{Ailsa Keating}
\begin{document}
\maketitle

\begin{abstract}
Given two Lagrangian spheres in an exact symplectic manifold, we find conditions under which the Dehn twists about them generate a free non-abelian subgroup of the symplectic mapping class group. This extends a result of Ishida for Riemann surfaces \cite{Ishida}. The proof generalises the categorical version of Seidel's long exact sequence \cite{Seidel03} to arbitrary powers of a fixed Dehn twist. We also show that the Milnor fibre of any isolated degenerate hypersurface singularity contains such pairs of spheres. 
\end{abstract}

\tableofcontents

\section{Introduction}

Given any Lagrangian sphere $L$ in a symplectic manifold $M$, one can define the Dehn twist $\tau_L$ about it. This  is a symplectomorphism supported in a tubular neighbourhood of $L$, that is defined up to Hamiltonian isotopy, and a choice of framing for the sphere (\cite{Seidel99}). If $M$ and $L$ are exact, so is the symplectomorphism. These are sometimes known as generalised Dehn twists; in real dimension two, they recover the classical notion of a Dehn twist on a Riemann surface (discussed e.g.~in \cite{FLP}).
They arise, for instance, when studying the monodromy of a Milnor fibration, thought of as a compactly supported symplectomorphism. For a Morse singularity, this automorphism is a Dehn twist. For a general isolated hypersurface singularity, it can be expressed as a composition of Dehn twists about a collection of Lagrangian spheres. 
A natural problem is to study the subgroup of symplectomorphisms (up to some isotopy) generated by these. More generally, given a symplectic manifold with a set of distinguished Lagrangian spheres, one might want to understand the automorphisms generated by the Dehn twists about them;
 we study a sub-question.

Fix two Lagrangian spheres, say $L$ and $L'$. If they are disjoint, the two Dehn twists commute. If  they have exactly one intersection point, the Dehn twists have a braid relation: 
$\tau_L \tau_{L'} \tau_L = \tau_{L'} \tau_L \tau_{L'} $. (One can check this manually for Riemann surfaces. In general, it can be derived from monodromy computations for $(A_2)$ type singularities; see \cite{SeidelThomas} and Appendix A of \cite{Seidel99}.) 
Generically, it seems there need not be any other relations \cite{BirmanHilden, KhovanovSeidel}. We consider the case where the two spheres intersect twice or more; Ishida has studied this for Riemann surfaces.

\begin{thm*}[Ishida \cite{Ishida}]
Suppose $a$, $b$ are a pair of simple closed curves on $\Sigma_{g,n}$, an $n$-punctured genus $g$ surface, and that $I_{min}(a,b) \geq 2$. Then there are no relations between $\tau_a$ and $\tau_b$.
\end{thm*}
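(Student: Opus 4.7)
The natural approach is to apply the ping-pong lemma to the action of the mapping class group of $\Sigma_{g,n}$ on the set $\mathcal{S}$ of isotopy classes of essential simple closed curves, where the relevant size of a curve $c$ is measured by its geometric intersection numbers $i(a,c)$ and $i(b,c)$. The essential ingredient is the classical inequality describing how intersection numbers transform under iterated Dehn twists: for any simple closed curves $c, d$ with $i(a,c) \geq 1$ and any integer $n \neq 0$,
\[
\bigl| i(\tau_a^n c,\, d) - |n|\, i(a,c)\, i(a,d) \bigr| \leq i(c, d),
\]
and symmetrically for $\tau_b$. Geometrically this reflects the fact that $\tau_a^n c$ wraps around $a$ roughly $|n| \cdot i(a,c)$ times; it is proved by lifting to the annular cover corresponding to $a$ and counting arcs, using a bigon criterion to verify minimality of the twisted configuration.

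With this in hand I would define the ping-pong sets
\[
X_a = \{ c \in \mathcal{S} : i(b,c) > i(a,c) \}, \qquad X_b = \{ c \in \mathcal{S} : i(a,c) > i(b,c) \},
\]
which are obviously disjoint. They are non-empty because $i(a,b) \geq 2$ places $a \in X_a$ and $b \in X_b$ (since $i(a,a) = i(b,b) = 0$). The heart of the argument is to verify the ping-pong inclusions $\tau_a^n(X_b) \subseteq X_a$ and $\tau_b^n(X_a) \subseteq X_b$ for every $n \neq 0$. For $c \in X_b$ the strict inequality $i(a,c) > i(b,c) \geq 0$ forces $i(a,c) \geq 1$, so the estimate above applies, and combined with $i(a,b) \geq 2$ it yields
\[
i(\tau_a^n c, b) \;\geq\; |n|\, i(a,c)\, i(a,b) - i(b,c) \;\geq\; 2\, i(a,c) - i(b,c) \;>\; i(a,c) \;=\; i(\tau_a^n c, a),
\]
so $\tau_a^n c \in X_a$. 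The argument with $\tau_b$ is identical. The ping-pong lemma then delivers a free subgroup of rank two generated by $\tau_a$ and $\tau_b$, which is exactly the statement.

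The primary technical obstacle is the intersection number inequality itself; everything downstream is a short manipulation. It is worth noting where the hypothesis $I_{\min}(a,b) \geq 2$ enters: precisely to produce the factor of $2$ that makes the estimate strict even in the worst case $|n| = 1$. If $i(a,b) = 1$ the analogous bound gives only $i(\tau_a^n c, b) \geq |n|\, i(a,c) - i(b,c)$, which is too weak, as it must be since the braid relation $\tau_a \tau_b \tau_a = \tau_b \tau_a \tau_b$ precludes the conclusion in that case.
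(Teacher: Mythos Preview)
Your proposal is correct and matches the approach the paper attributes to Ishida. The paper does not itself prove this theorem---it is quoted as background---but it singles out as Ishida's ``key technical lemma'' the inequality
\[
|n|\cdot I_{\min}(c,a)\cdot I_{\min}(a,b)\;\le\;I_{\min}(c,b)+I_{\min}(\tau_a^n(c),b),
\]
which is exactly the lower bound you extract from your two-sided estimate (with $d=b$). The ping-pong iteration you run is precisely what the paper reproduces, written out by hand rather than as an appeal to the ping-pong lemma, in Section~\ref{sec:conclusion} for its higher-dimensional generalisation: Lemma~\ref{lem2.3} there is the implication $hf(L,L_1)>hf(L_0,L_1)\Rightarrow hf(L,\tau_L^n L_1)<hf(L_0,\tau_L^n L_1)$, which is your inclusion $\tau_a^n(X_b)\subseteq X_a$ in different notation. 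One small point worth making explicit in your write-up: the version of ping-pong with two disjoint sets needs one of the cyclic groups to have order at least~$3$; here $\tau_a$ has infinite order, which you can read off from the same inequality applied to $c=d=b$.
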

$I_{min}(a,b)$ is the minimal intersection number of $a$ and $b$, as defined in \cite{FLP}; it is the smallest unsigned count of intersection points, varying over the isotopy classes of $a$ and $b$. The key technical lemma for Ishida's result is the inequality
\bq
|n| \cdot I_{min}(c,a) \cdot I_{min}(a,b)
\leq I_{min}(c,b) + I_{min}(\tau^n_a (c), b) 
\eq
that holds for any triple of simple closed curves $a$, $b$ and $c$. 
To generalise this result to higher dimensions, we need to find a suitable measure of `intersection'. Suppose that $M$ is an exact symplectic manifold with contact type boundary, $L$ a Lagrangian sphere, and $L_0$, $L_1$ arbitrary compact exact Lagrangians. Seidel's long exact sequence \cite{Seidel03} gives the inequality of Floer ranks
\bq
dim (HF(L_1, L)) \cdot dim (HF(L, L_0)) \leq dim (HF(L_1, L_0)) + dim ( HF( \tau_{L} (L_1), L_0).
\eq
This suggests using rank of Floer cohomology groups as a substitute for intersection numbers; these recover the minimal intersection number of any two non-isotopic simple closed curves. However, for any exact Lagrangian sphere $L$ and any Hamiltonian perturbation $\phi$, $rk (HF(L, \phi(L))) = 2$, and Dehn twists about these spheres are isotopic. 
We have to make sure to exclude such cases. Such a pair $L$ and $\phi(L)$ are Lagrangian isotopic, which in particular implies that they are quasi-isomorphic objects of the Fukaya categeory of $M$, which in this case just means that the Floer products
\begin{eqnarray}
HF(L, \phi(L)) \otimes HF(\phi(L), L) & \to & HF(\phi(L), \phi(L)) \\
HF(\phi(L), L) \otimes HF(L, \phi(L)) & \to & HF(L, L)
\end{eqnarray}
are surjective (see Corollary \ref{th:surjectivegiveiso}). Our main result is:

\begin{thm}\label{th:freegroup}
Suppose $n \geq 2$. Let $M^{2n}$ be an exact symplectic manifold with contact type boundary, and  $\pi_0(Symp(M))$ the group of symplectomorphisms of $M$, up to symplectic isotopy.
Suppose $L$ and $L'$ are two Lagrangian spheres such that $dim (HF(L,L')) \geq 2$; additionally, if $dim(HF(L,L'))=2$, we require that $L$ and $L'$ be not quasi-isomorphic in the Fukaya category of $M$. Then the Dehn twists $\tau_L$ and $\tau_{L'}$ generate a free subgroup of $\pi_0(Symp(M))$.
\end{thm}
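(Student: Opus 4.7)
The strategy is to follow Ishida's script, with Floer cohomology ranks playing the role of unsigned intersection numbers and a generalisation of inequality (2) replacing her key inequality (1). The main technical goal is a rank inequality
$$|n| \cdot \dim HF(L_1, L) \cdot \dim HF(L, L_0) \leq \dim HF(L_1, L_0) + \dim HF(\tau_L^n(L_1), L_0)$$
valid for every integer $n$, every exact Lagrangian sphere $L$, and any compact exact Lagrangians $L_0, L_1$. I would prove it by extending the categorical proof of Seidel's long exact sequence \cite{Seidel03} to express $\tau_L^n(L_1)$ as a bar-type iterated twisted complex built from $n$ copies of $L$, glued by higher $A_\infty$-compositions of evaluation and coevaluation between $L$ and $L_1$. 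Seidel's cone $HF(L, L_1) \otimes L \to L_1$ recovers the case $n=1$, and the rank inequality then follows by decomposing the spectral sequence of the bar complex into $n$ identical blocks.

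Granted the rank inequality, I would conclude by a ping-pong on Floer ranks. Given a non-trivial reduced word $w = \tau_L^{n_k} \tau_{L'}^{m_k} \cdots \tau_L^{n_1} \tau_{L'}^{m_1}$, apply it to $X_0 := L$, let $X_i$ denote the $i$-th partial product, and track $(\alpha_i, \beta_i) := (\dim HF(X_i, L),\, \dim HF(X_i, L'))$. Since $\tau_L$ fixes $L$ up to grading shift, $\tau_L^{n_i}$ leaves $\alpha$ unchanged while the rank inequality forces $\beta_{\mathrm{new}} \geq |n_i|\, d\, \alpha - \beta$, with $d := \dim HF(L, L')$; symmetrically, $\tau_{L'}^{m_i}$ leaves $\beta$ unchanged and forces $\alpha_{\mathrm{new}} \geq |m_i|\, d\, \beta - \alpha$. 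Provided the first step is strict, iterating these recurrences with $d \geq 2$ forces $\alpha_i + \beta_i$ to grow strictly (geometrically when $d \geq 3$), so $(\alpha_k, \beta_k) \neq (2, d) = (\alpha_0, \beta_0)$. Hence $w(L)$ is not quasi-isomorphic to $L$ in the Fukaya category, and $w$ is not symplectically isotopic to the identity, since a symplectic isotopy of exact symplectomorphisms moves an exact compact Lagrangian by a Hamiltonian isotopy and so preserves quasi-isomorphism classes. Reduced words of other shapes (ending in a power of $\tau_L$, starting with $\tau_{L'}$, etc.) are handled by the same argument applied to $L'$ or after cyclic re-bracketing.

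The delicate case is $d = 2$: the very first invocation of the recurrence only yields $\alpha_1 \geq d^2 - 2 = 2 = \alpha_0$, which is not strict, and the induction stalls. Here the hypothesis $L \not\simeq L'$ enters: by Corollary \ref{th:surjectivegiveiso}, non-quasi-isomorphism is equivalent to one of the Floer products (5)--(6) failing to be surjective, and the corresponding cokernel contributes an extra piece to Seidel's long exact sequence that strictly improves the rank inequality at the base step. This extra $+1$ then propagates through the remainder of the word by the main argument. I expect the principal obstacle to be the construction of the generalised twisted complex for $\tau_L^n$ and the verification that it converges to $\tau_L^n(L_1)$: the single-twist case is already the technical heart of \cite{Seidel03}, and setting up and controlling an $n$-fold bar resolution requires substantially more elaborate $A_\infty$-bookkeeping. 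Extracting the base-case refinement from the non-quasi-isomorphism hypothesis is a secondary but still subtle point.
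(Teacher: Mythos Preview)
Your overall strategy---build a bar-type twisted complex for $\tau_L^n$ and then run a ping-pong on Floer ranks following Ishida---is exactly the paper's, but the specific rank inequality you propose is false, and this is a genuine gap.

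The inequality
\[
|n|\cdot hf(L_1,L)\cdot hf(L,L_0)\ \le\ hf(L_1,L_0)+hf(\tau_L^n L_1,L_0)
\]
fails already for $L_0=L_1=L$: since $\tau_L$ fixes $L$ up to quasi-isomorphism, the right-hand side is $2+2=4$, while the left is $4|n|$. More structurally, the truncated bar complex $(hom(L,L_0)\otimes_A hom(L_1,L))_n$ does \emph{not} split into $n$ independent blocks: the $A$-module $hom(L,L_0)$ can contain summands $R_k$ (with $\mu^k(r^0,\epsilon,\ldots,\epsilon)=r^1$), and these contribute differentials that jump across tensor lengths and kill cohomology. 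For a single $R_k$ tensored with a trivial $\Z_2$, the cohomology of the truncated bar complex stabilises at rank $2k-2$ once $n\ge k$, so there is no linear growth in $n$. The paper's Section~\ref{sec:classification} classification of $A$-modules makes this precise.

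What the paper actually proves (Proposition~\ref{th:rankmasterinequalities}) is weaker but sufficient: a factor-$1$ inequality for all $n\neq 0$, and a factor-$2$ inequality for $|n|\ge 2$ provided $L\ncong L_0,L_1$ (the latter uses Corollary~\ref{th:multiplybyenontrivialimage} to force $k\ge 3$ in every $R_k$ summand). The factor-$1$ inequality alone, combined with $hf(L,L')\ge 2$, already drives the Ishida iteration (Lemma~\ref{lem2.3}): one shows that the \emph{sign} of $hf(L,\cdot)-hf(L',\cdot)$ flips after each non-trivial power of $\tau_L$ or $\tau_{L'}$, which is enough for the contradiction. Your recursion $\beta_{\mathrm{new}}\ge |n_i|\,d\,\alpha-\beta$ is thus replaced by the qualitative statement $hf(L,L_1)>hf(L_0,L_1)\Rightarrow hf(L,L_1)<hf(\tau_L^n L_0,L_1)$.

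Your treatment of $d=2$ also diverges from the paper. There is no ``extra $+1$'' coming from a cokernel in the long exact sequence; instead the paper splits into two cases. For $|m|\ge 2$ the factor-$2$ inequality gives $hf(\tau_{L'}^m L,L)\ge 2d^2-2=6>2$ directly. For $|m|=1$ one argues by contradiction: if $hf(\tau_{L'}L,L)=2$, the long exact sequence forces the Floer product $HF(L,L')\otimes HF(L',L)\to HF(L,L)$ to be surjective, and Corollary~\ref{th:surjectivegiveiso} then gives $L\cong L'$, contrary to hypothesis.
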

A Dehn twist induces a functor from the Fukaya category of $M$, $\Fuk(M)$, to itself, defined up to quasi-isomorphism;  it is invertible, also up to quasi-isomorphism. In particular, the induced functor on the cohomology category $H(\Fuk(M))$ is invertible.
We use a stronger form of the long exact sequence of \cite{Seidel03} that is phrased using twisted complexes, viewing the Dehn twist as such a functor \cite[Corollary 17.17]{Seidel08}.  The proof of Theorem \ref{th:freegroup} hinges on a generalisation of this to arbitrary powers of a fixed Dehn twist (Proposition \ref{th:bartwistedcomplex}). 
 As a by-product we show that:
 
 \begin{thm}\label{th:freegrouponFukaya}
 $\tau_L$ and $\tau_{L'}$ generate a free subgroup of automorphisms of $H(\Fuk(M))$. 
 \end{thm}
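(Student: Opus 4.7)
The plan is to prove Theorem \ref{th:freegrouponFukaya} directly; Theorem \ref{th:freegroup} then follows automatically, since the natural homomorphism
\begin{equation*}
\Phi \colon \pi_0(Symp(M)) \longrightarrow \mathrm{Aut}(H(\Fuk(M)))
\end{equation*}
preserves any relation. Concretely, given a non-trivial reduced word
\begin{equation*}
w \;=\; \tau_{L_{i_1}}^{n_1}\cdots\tau_{L_{i_k}}^{n_k}, \qquad L_{i_j}\in\{L,L'\}\ \text{alternating},\ n_j\in\Z\setminus\{0\},
\end{equation*}
I would exhibit an object $L_0\in\Fuk(M)$ (to be chosen among $L$ and $L'$) and a test object $K$ such that $\dim HF(w(L_0),K)\neq\dim HF(L_0,K)$. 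Since rank of Floer cohomology is a quasi-isomorphism invariant in both entries, this forces $w(L_0)\not\simeq L_0$ in $H(\Fuk(M))$; hence $\Phi(w)$ is not isomorphic to the identity functor and is therefore a non-trivial element of $\mathrm{Aut}(H(\Fuk(M)))$.

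The tool for producing the rank separation is the iterated Dehn twist long exact sequence, packaged as the bar twisted complex of Proposition \ref{th:bartwistedcomplex}, combined with the inductive rank estimates of Proposition \ref{th:rankmasterinequalities} which upgrade Seidel's inequality (1.2) to arbitrary powers $\tau_L^n$. Peeling the Dehn twists off the right end of $w$ one at a time, these estimates bound $\dim HF(w(L_0),K)$ below by a product that grows exponentially in $k$: provided $\dim HF(L,L')\geq 2$, the multiplicative gain at each step is at least some fixed $C>1$, yielding
\begin{equation*}
\dim HF(w(L_0),K)\;\geq\;C^{k-k_0}
\end{equation*}
for all sufficiently long $w$, which eventually exceeds the fixed value $\dim HF(L_0,K)$.

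The main obstacle is making this bound strong enough to handle \emph{short} words, for which the exponential growth has not yet taken effect. For single powers $\tau_L^n$, $\tau_{L'}^m$ and two-letter words $\tau_L^n\tau_{L'}^m$, I would inspect the twisted complex of Proposition \ref{th:bartwistedcomplex} directly and read off non-triviality by a Floer rank computation. In the borderline case $\dim HF(L,L')=2$, the extra hypothesis that $L$ and $L'$ are not quasi-isomorphic is what rules out the single configuration in which the twisted complex could collapse and accidentally give $w(L_0)\simeq L_0$; this is precisely why the surjectivity condition on the Floer products is imposed in the statement of Theorem \ref{th:freegroup}. Once these base cases are in hand, the inductive step from Proposition \ref{th:rankmasterinequalities} reduces each longer word to a shorter one and the exponential estimate carries the argument through.
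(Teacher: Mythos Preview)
Your overall plan---iterate the rank inequalities of Proposition~\ref{th:rankmasterinequalities} along the word and produce a test object whose Floer rank changes---is the right one, and for $hf(L,L')\geq 3$ it is essentially equivalent to the paper's argument. The paper organises the same information differently: rather than tracking a single quantity and claiming exponential growth, it tracks the \emph{pair} $\big(hf(L,X),\,hf(L',X)\big)$ as the partial word $X$ grows, and proves a ``flipping'' lemma (Lemma~\ref{lem2.3}): if $hf(L,X)>hf(L',X)$ before applying a power of $\tau_L$, the inequality reverses afterwards. Iterating this alternation yields an immediate contradiction with $w=1$, without ever quantifying the growth rate. Your exponential estimate is recoverable from this (the recursion $(a,b)\mapsto(a,\,da-b)$ does grow like $d^k$ when $d\geq 3$), but the flipping formulation is what makes the induction clean.

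The genuine gap is in your treatment of the case $hf(L,L')=2$. Here your claimed inequality $\dim HF(w(L_0),K)\geq C^{k-k_0}$ with $C>1$ simply fails to follow from the first inequality of Proposition~\ref{th:rankmasterinequalities}: the recursion becomes $(a,b)\mapsto(a,\,2a-b)$, which starting from $(2,2)$ stays at $(2,2)$ forever. The second inequality (with the factor~$2$) rescues this, but it is only available when the exponent satisfies $|n|\geq 2$ and when $L\ncong L_0,L_1$; so any occurrence of $\tau_L^{\pm 1}$ or $\tau_{L'}^{\pm 1}$ in the word blocks your growth estimate, not just for ``short'' words but throughout. The paper resolves this with a separate argument (Claim~\ref{th:claim}): for the single problematic step $\tau_{L'}^{\pm 1}$ it shows directly that $hf(L,\tau_{L'}L)>hf(L',\tau_{L'}L)=2$, by observing that equality would force the long exact sequence to split and make the product $HF(L,L')\otimes HF(L',L)\to HF(L,L)$ surjective, hence $L\simeq L'$ by Corollary~\ref{th:surjectivegiveiso}. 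This is exactly where the non-quasi-isomorphism hypothesis enters, and it is needed at every $\pm 1$ exponent, not only as a base case. Once this single-step inequality is in hand, Lemma~\ref{lem2.3} takes over and the flipping argument runs as before.
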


\begin{rmk} The hypotheses on $M$, $L$ and $L'$ under which we prove Theorem \ref{th:freegroup} are actually weaker, but more technical.  Note that if $L$ and $L'$ are Lagrangian isotopic, they are quasi-isomorphic in the Fukaya category. As we heavily use Floer and Fukaya--theoretic tools, which cannot tell quasi-isomorphic elements apart, it seems unlikely that we will be able to relax that condition without drastically modifying our approach.
\end{rmk}

We give examples of such pairs of Lagrangian spheres in the Milnor fibres of all degenerate isolated hypersurface singularities. For some of these, we also arrange for the spheres to be homologous, in which case the result cannot be at all detected using the Picard-Lefschetz theorem, or other homological tools. Such examples exist for all degenerate isolated singularities in the even complex-dimensional case. In the odd-dimensional case, though we cannot find any for $(A_2)$, they exist at least for all singularities adjacent to $(A_3)$ (see section \ref{sec:adjacency} for a definition).

Suppose $M$ satisfies the hypothesis of Theorem \ref{th:freegroup}. Let $\pi_0(\Diff^+M)$ be the group of orientation-preserving diffeomorphisms of $M$, up to smooth isotopy. There is a natural map 
\begin{equation}
\pi_0(Symp(M)) \to \pi_0 (\Diff^+M).
\end{equation}
What can we say about its kernel? Whenever $dim_{\R} M = 4$, one has $\tau^2_L = Id \in \pi_0 (\Diff^+M)$ \cite[Lemma 6.3]{Seidel99}. Moreover, whenever $dim_{\R} M = 2n$ for $n$ even, we have that $\tau^{k_n}_L = Id \in  \pi_0 (\Diff^+M) $, for some integer $k_n$ \cite[Section 3]{Krylov}. We then have the following immediate corollary to Theorem \ref{th:freegroup}.

\begin{cor}
Suppose $M^{2n}$ satisfies the hypothesis of Theorem \ref{th:freegroup}, and $n$ is even. Then the kernel of the forgetful map $\pi_0(Symp(M)) \to \pi_0 (\Diff^+M)$  contains a free non-abelian subgroup, generated by $\tau^{k_n}_L$ and $\tau^{k_n}_{L'}$.
\end{cor}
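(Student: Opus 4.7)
The two ingredients needed are already essentially at hand. First, by Theorem \ref{th:freegroup}, the Dehn twists $\tau_L$ and $\tau_{L'}$ generate a free subgroup $F \cong F_2 \subset \pi_0(Symp(M))$. Second, by the cited result of Krylov \cite[Section 3]{Krylov}, when $n$ is even there is an integer $k_n$ depending only on $n$ with $\tau_L^{k_n}$ and $\tau_{L'}^{k_n}$ both smoothly isotopic to the identity, so both of them already lie in the kernel $K$ of the forgetful map $\pi_0(Symp(M)) \to \pi_0(\Diff^+ M)$. Since $K$ is a subgroup, the entire subgroup $\langle \tau_L^{k_n}, \tau_{L'}^{k_n}\rangle$ generated by them is contained in $K$. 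The only real content of the corollary, therefore, is the purely group-theoretic statement that, inside a free group $F_2 = \langle a, b\rangle$, the subgroup $\langle a^{k_n}, b^{k_n}\rangle$ is again free of rank $2$.

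For that last point my plan is to invoke the Nielsen--Schreier theorem: any subgroup of the free group $F_2$ is free, so $H := \langle \tau_L^{k_n}, \tau_{L'}^{k_n}\rangle$ is free of some rank. To see the rank is $2$ and not $0$ or $1$, I would argue as follows. If $H$ were trivial then $\tau_L^{k_n} = 1$ in $F$, contradicting the fact that $F$ is free on $\{\tau_L, \tau_{L'}\}$. If $H$ were infinite cyclic, generated by some $c \in F$, then both $\tau_L^{k_n}$ and $\tau_{L'}^{k_n}$ would be nontrivial powers of a common element of the free group $F_2$; but any two elements of a free group that commute lie in a common cyclic subgroup, and $\tau_L, \tau_{L'}$ do not commute (they are distinct free generators), so $\tau_L^{k_n}$ and $\tau_{L'}^{k_n}$ cannot commute either, contradicting the fact that they both lie in the abelian group $\langle c\rangle$. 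Hence $H$ must be free of rank $\ge 2$, and as it is generated by two elements, it has rank exactly $2$.

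Putting these two ingredients together gives the corollary. There is no serious obstacle: once Theorem \ref{th:freegroup} and Krylov's vanishing result are invoked, the argument reduces to the elementary observation above about powers of free generators, and the only subtlety is making sure that the integer $k_n$ from \cite{Krylov} is universal (depending only on $n$) so that a single power kills both $\tau_L$ and $\tau_{L'}$ in $\pi_0(\Diff^+ M)$ simultaneously.
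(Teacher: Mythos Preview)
Your proposal is correct and matches the paper's approach: the paper simply declares this an ``immediate corollary'' of Theorem~\ref{th:freegroup} together with Krylov's result, without spelling out any details. Your write-up supplies the (elementary) group-theoretic justification that $\langle a^{k_n}, b^{k_n}\rangle$ is free of rank two inside $F_2=\langle a,b\rangle$, which the paper leaves implicit.
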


Additionally, for all the examples of homologous $L$, $L'$ that we construct in Milnor fibres, we show either that they are smoothly isotopic (even-dimensional case), or, in the odd complex-dimensional case, that they can be made disjoint after a smooth isotopy. Thus $\tau_L$ and $\tau_{L'}$ commute (or even agree) as elements of $\pi_0 (\Diff^+M)$, and the kernel again has a free non-abelian subgroup.

\section{Outline}

The main calculation involves twisted complexes associated to the Fukaya category of $M$, an $A_\infty$ category. Accordingly, section \ref{sec:Ainfinitypreliminaries} collects some background material on $A_\infty$ categories, and twisted complexes thereof. Section \ref{sec:Fukayapreliminaries} does the same for the Fukaya category of an exact symplectic manifold with contact type boundary. The first noteworthy ingredient is:

\begin{itemize}
\item  Let $L$ a Lagrangian sphere, and $A$ be the $A_\infty$ algebra on two generators given by $\Z_2[\e]/(\e^2)$, with $\mu^2(1,\e) = \mu^2(\e,1) = \e$, and all other $A_\infty$ structure maps trivial. 
 The Floer chain group $CF(L, L)$ with its Fukaya $A_\infty$ structure is quasi-isomorphic to $A$. (Proposition \ref{th:lagspherestructure}.)

\end{itemize}
This enables us to obtain two technical criteria for quasi-isomorphisms of Lagrangians, both of which feed into the main proof:
\begin{itemize}

\item
Suppose that $L_0$ and $L_1$ are Lagrangian spheres, that $dim (HF(L_0, L_1))=2$, and that multiplication 
\bdm
HF(L_0,L_1) \otimes HF(L_1, L_0) \to HF(L_1, L_1)
\edm
is surjective. Then $L_0$ and $L_1$ are quasi-isomorphic in the Fukaya category. 
(Corollary \ref{th:surjectivegiveiso}.)

\item 
Take two Lagrangian spheres $L_0$, $L_1$. Let $\epsilon \in HF(L_0,L_0) \cong H^\ast(L_0;\Z/2)$ be the generator corresponding to the top degree cohomology class. Consider its action by product on $HF(L_0,L_1)$, which is the map
\begin{align}
HF (L_0, L_1) & \to HF (L_0, L_1)  \\
[a] & \mapsto [\mu_2(a,\e)].
\end{align}
If it is nonzero, then $L_0$ and $L_1$ are quasi-isomorphic in the Fukaya category. (Corollary \ref{th:multiplybyenontrivialimage}.)

\end{itemize}
The main body of the proof is in sections \ref{sec:generalisation} and \ref{sec:lowerbounds}. First, some notation: let $L$ be a Lagrangian sphere, and $L_0$, $L_1$ any exact Lagrangians. $hom(L, L_0)$ denotes morphisms in the Fukaya category, and $ev$ evaluations maps, for instance $hom(L, L_0) \otimes L \to L_0$. 

Let $(\e)$ be the one-dimensional $\Z_2$--vector space generated by $\e$. 
Iterated applications of the long exact sequence of \cite{Seidel03} give an expression for $\tau^n_L L_0$; we start by simplifying this to show that:

\begin{bartwistedcomplex}
$\tau^n_L L_0$
 it is quasi-isomorphic to the twisted complex
\bq \label{eq:twistedcomplexintro}
 L_0 \; \oplus \; hom(L,L_0)\otimes L \; \oplus \; hom(L,L_0)  \otimes  (\e) \otimes L \;\oplus \ldots \oplus \;  hom(L,L_0)\otimes \overbrace{ (\e) \ldots (\e)}^{n-1} \otimes L
\eq
with differential acting on the $r^{th}$ summand by 
\bq
Id^{\otimes r-2} \otimes ev \; \oplus \; \sum_{r = i+j+k, \, j>1} Id^{\otimes i} \otimes \mu^j_\A \otimes Id^{\otimes k-1} \otimes 1.
\eq

\end{bartwistedcomplex}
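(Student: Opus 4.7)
The natural strategy is induction on $n$, applying the twisted complex form of Seidel's long exact sequence (\cite[Corollary 17.17]{Seidel08}) at each step.

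The base case $n=1$ is precisely that statement: $\tau_L L_0 \simeq \mathrm{Cone}\bigl(ev \colon hom(L,L_0) \otimes L \to L_0\bigr)$, which matches the claim.

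For the inductive step, suppose $\tau_L^{n-1} L_0$ is quasi-isomorphic to a twisted complex $C_{n-1}$ of the asserted form. Applying Seidel once more, together with the fact that Dehn twists are exact functors commuting with mapping cones, gives
\[
\tau_L^{n} L_0 \simeq \mathrm{Cone}\bigl(ev \colon hom(L, C_{n-1}) \otimes L \to C_{n-1}\bigr).
\]
I then compute $hom(L, C_{n-1})$ term by term from the inductive description: the summand $L_0$ contributes $hom(L,L_0)$, and each summand $hom(L,L_0) \otimes (\e)^{r-1} \otimes L$ contributes $hom(L,L_0) \otimes (\e)^{r-1} \otimes hom(L,L)$. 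By Proposition~\ref{th:lagspherestructure}, $hom(L,L) \simeq \A = \Z_2 \cdot 1 \oplus \Z_2 \cdot \e$ as $A_\infty$ algebras; so each of these factors decomposes into a $1$-part $hom(L,L_0) \otimes (\e)^{r-1}$ and an $\e$-part $hom(L,L_0) \otimes (\e)^{r}$.

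The key simplification is that the $1$-parts, together with the Seidel evaluation (whose $\mu^2(-,1) = \mathrm{id}$ component is an isomorphism onto the corresponding summand of $C_{n-1}$), assemble into an acyclic direct summand of the total twisted complex. After splitting it off, one is left with exactly the claimed $C_n$: the $L_0$ summand plus $hom(L,L_0) \otimes (\e)^{r} \otimes L$ for $r = 0, 1, \ldots, n-1$. The residual differential is then read off from the surviving $A_\infty$ operations: the $\mu^2$-contributions reassemble into the evaluation terms $Id^{\otimes r-2} \otimes ev$, while the higher Fukaya products on $\A$ produce the sum $\sum_{r=i+j+k,\,j>1} Id^{\otimes i} \otimes \mu^j_\A \otimes Id^{\otimes k-1} \otimes 1$.

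The principal obstacle is the bookkeeping at each inductive step: identifying the correct acyclic subcomplex to split off, and verifying that the surviving differential is exactly the stated combination of $ev$ and the $\mu^j_\A$ rather than some more complicated iterated composition. Working over $\Z_2$ conveniently eliminates any sign concerns.
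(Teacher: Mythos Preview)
Your proposal is correct and follows essentially the same route as the paper: induction on $n$ via Seidel's cone description, computation of $hom(L,C_{n-1})$ using $hom(L,L)\cong A$, and cancellation of the ``$1$-parts'' against matching summands of $C_{n-1}$. The only cosmetic difference is that the paper performs the cancellation as an iterated quotient by acyclic \emph{subcomplexes} (one tensor-length at a time, invoking Lemma~\ref{th:isobetweentwosummands}) rather than splitting off a single acyclic direct summand, which makes the bookkeeping you flag as the principal obstacle easier to verify.
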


This requires a weak form of Proposition \ref{th:lagspherestructure}: we only need the structure on the cohomology level. 
The twisted complex \eqref{eq:twistedcomplexintro} immediately gives an expression for $hom(L_1, \tau^n_L L_0)$. 
Consider the $A_\infty$ reduced bar complex corresponding to the tensor over $A$ of $ hom(L, L_0) =: M$ and $hom(L_1, L)=:N$, say 
\begin{multline}\label{eq:barcomplex}
M \otimes_A N := \\
\big(M \otimes N \big) \oplus  \big(M \otimes (\e) \otimes N \big)
\oplus \big( M \otimes (\e)  \otimes (\e) \otimes N \big)
\oplus \ldots \oplus 
\big( M \otimes (\e)  \otimes \ldots \otimes (\e) \otimes N \big)
 \oplus  \ldots
\end{multline}
where tensor products without subscripts are taken over $\Z_2$, and the differential is given by summing over all the possible $Id^{\otimes i} \otimes \mu^j_{\A} \otimes Id^{\otimes k}$. (See \cite[Section 2.3.3]{Hasegawa}.) We shall denote by $(M \otimes_A N)_n$ the truncation of complex \eqref{eq:barcomplex} that consists of  only the first $n$ summands.
We find that
\bq
hom(L_1, \tau^n_L L_0) = hom(L_1, L_0) \oplus (hom(L, L_0) \otimes_A hom(L_1, L) )_n
\eq 
with the obvious differential, again obtained by taking all possible $A_\infty$ products. Moreover, we get the inequality
\begin{equation}\label{eq:truncatedbarcomplexinequality}
dim (HF( \tau_L^n L_0, L_1)) + dim(HF(L_0, L_1)) \geq dim \big( (hom(L, L_0) \otimes_A hom(L_1, L))_n \big).
\end{equation}
This is used to prove weak analogs of the inequality used by Ishida, by estimating the right-hand side. In order to do so, we first classify finite dimensional $A_\infty$ left and right modules over $A$ (Proposition \ref{th:classificationofmodules}), of which $hom(L, L_0)$ and $hom(L_1, L)$ are examples. This implies the following:
\begin{rankmasterinequalities}
For all integers $n \neq 0$, we have
\bq\label{eq:ineq1}
dim(HF(\tau^n_L (L_0), L_1)) + dim(HF(L_0, L_1)) \geq dim(HF (L, L_1)) \cdot dim(HF (L_0, L)).
\eq
Further, if $L \ncong L_0, L_1$ in the Fukaya category, and $|n| \geq 2$, we find that
\bq\label{eq:ineq2}
dim(HF(\tau^n_L (L_0), L_1)) + dim(HF(L_0, L_1)) \geq 2\,dim(HF (L, L_1)) \cdot dim(HF (L_0, L)).
\eq

\end{rankmasterinequalities}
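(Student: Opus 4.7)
The plan is to feed Proposition \ref{th:classificationofmodules} into the already-established inequality \eqref{eq:truncatedbarcomplexinequality}: writing $M := hom(L, L_0)$ and $N := hom(L_1, L)$ as $A_\infty$-modules over $A \simeq hom(L,L)$, both bounds reduce to a lower bound on $\dim (M \otimes_A N)_n$. By Poincar\'e duality for Floer cohomology, $\dim HF(L_0,L) = \dim HF(L,L_0) = \dim M$ and $\dim HF(L,L_1) = \dim HF(L_1,L) = \dim N$, so the right-hand sides of \eqref{eq:ineq1} and \eqref{eq:ineq2} become $\dim M \cdot \dim N$ and $2\,\dim M \cdot \dim N$ respectively. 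The case $n<0$ reduces to $n>0$ by swapping $L_0$ and $L_1$ and using $\dim HF(\tau^n_L L_0, L_1) = \dim HF(L_0, \tau^{-n}_L L_1)$, so I henceforth assume $n \geq 1$.

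I would then split into two cases. \emph{Case 1:} $L$ is quasi-isomorphic to $L_0$ or to $L_1$ in $\Fuk(M)$. Here $L$ is a spherical object (by Proposition \ref{th:lagspherestructure}), so $\tau_L$ acts on its quasi-isomorphism class by a degree shift; consequently $\tau^n_L L_0 \cong L_0$ up to shift, and $\dim HF(\tau^n_L L_0, L_1) = \dim HF(L_0, L_1) = \dim HF(L, L_1)$. Since $\dim HF(L_0, L) = \dim HF(L, L) = 2$ in this case, direct substitution gives equality in \eqref{eq:ineq1}; the hypotheses of \eqref{eq:ineq2} exclude this case.

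\emph{Case 2:} $L \ncong L_0$ and $L \ncong L_1$. Then Corollary \ref{th:multiplybyenontrivialimage} forces multiplication by $\e \in HF(L,L)$ on both $HF(L,L_0)$ and $HF(L_1,L)$ to vanish, so the cohomology-level $H(A)$-module structures on $M$ and $N$ are trivial. Invoking Proposition \ref{th:classificationofmodules}, I would replace $M$ and $N$ by quasi-isomorphic minimal $A_\infty$-module representatives that are direct sums of copies of the simple trivial module $\Z_2$, with \emph{strictly} vanishing module structure maps. Since the only non-zero structure map of $A$ is $\mu^2$ and $\mu^2(\e,\e) = 0$, every summand of the bar differential on $(M \otimes_A N)_n$ vanishes, so this truncated complex has zero differential and total dimension $\sum_{k=0}^{n-1} \dim M \cdot \dim N = n\,\dim M\,\dim N$. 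Plugging into \eqref{eq:truncatedbarcomplexinequality} gives \eqref{eq:ineq1} for all $n \geq 1$ and \eqref{eq:ineq2} for all $n \geq 2$.

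The main technical obstacle is the upgrade from cohomology-level triviality of the $\e$-action (which is what Corollary \ref{th:multiplybyenontrivialimage} delivers) to a quasi-isomorphic $A_\infty$-module with strictly vanishing higher operations: this is precisely what Proposition \ref{th:classificationofmodules} is set up to provide, and is the reason a full $A_\infty$-module classification is needed rather than just one of $H(A)$-modules. A subsidiary point is justifying that the truncated bar complex --- which is not a priori a quasi-isomorphism invariant --- may be computed with the minimal-model representatives; this is legitimate here because the resulting zero-differential complex gives an unambiguous lower bound on the cohomology dimension appearing in \eqref{eq:truncatedbarcomplexinequality}.
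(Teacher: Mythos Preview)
There is a genuine gap in Case 2. You claim that triviality of the $\e$-action on $HF(L,L_0)$ and $HF(L_1,L)$ lets you invoke Proposition~\ref{th:classificationofmodules} to replace $M$ and $N$ by direct sums of copies of the trivial module $\Z_2$. But that is not what the classification says: it decomposes any finite strictly unital $A$-module as a sum of $\Z_2$'s \emph{and} $R_k$'s (respectively $\mathcal{L}_k$'s). Vanishing of multiplication by $\e$ on cohomology only kills the $R_2$ summand (and $R_1$ is acyclic); for $k\ge 3$ the module $R_k$ has $\mu^2(\cdot,\e)=0$ yet a non-trivial higher operation $\mu^k$, and one checks directly that $R_k$ is \emph{not} quasi-isomorphic to $\Z_2\oplus\Z_2$. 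So your ``strictly vanishing higher operations'' conclusion is unjustified, the bar differential on $(M\otimes_A N)_n$ need not be zero, and the count $n\cdot\dim M\cdot\dim N$ collapses. Your final paragraph flags exactly this step as the crux and then asserts the proposition handles it; it does not.

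The paper's argument does not try to trivialise the modules. It keeps all summand types from Proposition~\ref{th:classificationofmodules} and analyses each elementary tensor $(\Z_2\otimes_A\Z_2)_n$, $(R_k\otimes_A\Z_2)_n$, $(\Z_2\otimes_A\mathcal{L}_k)_n$, $(R_j\otimes_A\mathcal{L}_k)_n$ by hand, exhibiting explicit surviving cohomology classes to show each has rank at least $\dim X\cdot\dim Y$. Corollary~\ref{th:multiplybyenontrivialimage} enters only to force $j,k\ge 3$ under the hypothesis $L\ncong L_0,L_1$, which (together with $n\ge 2$) doubles the count in each case and yields \eqref{eq:ineq2}. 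Your Case~1 treatment via $\tau_L L\cong L$ is fine, but the paper avoids the case split entirely by proving the summand-by-summand bound uniformly. Incidentally, the invariance of the truncated bar complex under quasi-isomorphism of $M,N$ that you worry about is exactly Lemma~\ref{th:changeqiclass}; it holds on the nose, not merely as a lower bound.
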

Inequality \eqref{eq:ineq2} also uses Corollary \ref{th:multiplybyenontrivialimage}. 
Inequalities \eqref{eq:ineq1} and \eqref{eq:ineq2} are strong enough to allow us to conclude the proofs of Theorems \ref{th:freegroup} and \ref{th:freegrouponFukaya}, in section \ref{sec:conclusion}, by following Ishida's argument \cite{Ishida}. The case $dim(HF(L,L'))=2$ requires careful consideration, and additionally, the use of Corollary \ref{th:surjectivegiveiso}.

Section \ref{sec:examples} contains the examples. 
We consider Milnor fibres of isolated hyperplane singularities with the natural symplectic form. 
Classically, if a singularity is adjacent to another, one gets a smooth embedding of one Milnor fibre into another; there is an analogous fact in the symplectic setting. By a result of Abouzaid (Lemma \ref{th:Abouzaid}), the structure of the Floer complex between two Lagrangians is preserved under such an embedding. This implies that if a singularity $[f]$ is adjacent to $[g]$, and the Milnor fibre of $g$ contains Lagrangian spheres satisfying the hypothesis of Theorem \ref{th:freegroup}, then so do the Milnor fibres of representatives of $[f]$ (Corollary \ref{th:propertySadjacency}). It is thus enough to construct strategically positioned such spheres, in the Milnor fibre of the $(A_2)$ singularity (and, to get the homologous odd-dimensional examples, $(A_3)$). We do this by hand, using the framework of Khovanov and Seidel \cite{KhovanovSeidel}. 
In the even dimensional case, we shall get `for free' that the two spheres are smoothly isotopic. In the odd dimensional case, we show that one of the spheres that we construct in the $(A_3)$ fibre can be isotoped so as not to intersect the other using the Whitney trick.

\subsubsection*{Acknowledgements}
I thank my advisor, Paul Seidel, for suggesting this project, and for many helpful conversations and suggestions. 
I would also like to thank Mohammed Abouzaid for several patient explanations, notably of material presented in section \ref{sec:Fukayapreliminaries}, and Nicholas Sheridan for comments on an earlier version of this draft. Additionally, I thank Oscar Randal-Williams for pointing out reference \cite{Krylov} to me.

 Part of this work was carried out during visits to the Simons Center for Geometry and Physics, which I thank for its hospitality. 
I was partially supported by NSF grant DMS-1005288. 

%-------------------------------------------------------------------------------------------------------
%------------------------------------------------------------------------------------------------------
% A_infty CATEGORIES AND TWISTED COMPLEXES: PRELIMINARIES.

\section{$A_\infty$ categories and twisted complexes thereof: preliminaries}\label{sec:Ainfinitypreliminaries}

% Definitions and notation

\subsection{Definitions and notation}

Our $A_\infty$ categories are defined over the field of two elements $\Z_2$, and do not have gradings.
An $A_\infty$ category $\A$ will consist of a set of objects $\text{Ob} \A$, and, for each ordered pair of objects $(L_0, L_1)$,  a $\Z_2$--vector space of morphisms between them, denoted by $hom_\A (L_0, L_1)$.
This is equipped with additional structure, the $A_\infty$ composition maps. We follow \cite{Seidel08}'s convention on the ordering of indices for these:
\bq
\mu^d_\A : hom_\A(L_{d-1}, L_d) \otimes \ldots \otimes hom_\A(L_0, L_1) \to hom_\A (L_0, L_1).
\eq
These must satisfy relations, usally called the $A_\infty$ associativity equations
\bq
\sum_{n=r+s+t,\, s \geq 1} \mu_\A^{r+t+1} (1^{\otimes r} \otimes \mu_\A^s \otimes 1^{\otimes t} ) =0
\eq
for each $n$. 
When there is no ambiguity about which category we are working in, we will suppress the subscript $\A$ on both $hom$ and $\mu^d$. 
For a given $\A$, the cohomology category $H(\A)$ has the same objects, and morphisms $Hom(L_0, L_1):=H(hom(L_0, L_1), \mu^1)$.
This inherits a product from $\mu^2$, defined by
\begin{eqnarray}
Hom(L_1, L_2) \otimes Hom(L_0, L_1) &  \to & Hom(L_0, L_2) \\
([b] , [a]) & \mapsto & b \cdot a := [\mu^2(b,a)]
\end{eqnarray}
for any two cocycles $a \in hom(L_0, L_1)$, $b \in hom(L_1, L_2)$. 
By a `representative' of an $A_\infty$ category we will mean any quasi-isomorphic category. 
\newline

\noindent
An $A_\infty$ category might additional have one, or more, of the following features:
\begin{itemize}
\item finiteness: the morphism spaces are finite-dimensional;
\item cohomological finiteness: the morphism spaces of $H(\A)$ are finite-dimensional;
\item minimality: $\mu^1$ vanishes;
\item c-unitality: the cohomology category $H(\A)$ is unital;
\item strictly unitality: for each $L \in \text{Ob} \A$, there is an element $1_L \in hom(L,L)$ such that $\mu^1(1_L) = 0$, $\mu^2(a, 1_L) = \mu^2(1_L', a) = a$ for all $a \in hom(L, L')$, and higher compositions involving $1_L$ are zero.
\end{itemize}

% Choosing 'nice' representatives for A_\infty categories. 

\subsection{Choosing `nice' representatives for $A_\infty$ categories}

Given any $A_\infty$ category $\A$, we can always construct a minimal quasi-isomorphic $A_\infty$ category with the same objects.
If $\A$ is also c-unital, minimality and strict unitality can be achieved simultaneously (\cite[Lemma 2.1]{Seidel08}). 
Also, if two strictly unital $A_\infty$ algebras are quasi-isomorphic, and the quasi-isomorphism is c-unital, then the algebras can be related by a quasi-isomorphism that is itself strictly unital (\cite[Theorem 3.2.2.1]{Hasegawa}). 

Fix any minimal, strictly unital $A_\infty$ algebra $B$. 

\begin{lem}\label{th:nicerepresentative}
Suppose $\A$ is a c-unital $A_\infty$ category with a distinguished object $L$ such that the $A_\infty$ algebra $hom_\A(L,L)$ is quasi-isomorphic to $B$, and that the quasi-isomorphism is c-unital. Then we can find a quasi-isomorphic category $\tilde{\A}$, with the same objects, that is minimal, strictly unital, and such that $hom_{\tilde{\A}}(L, L)$ is strictly isomorphic to $B$.
\end{lem}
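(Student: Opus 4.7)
The plan is to proceed in three steps: first, replace $\A$ by a minimal strictly unital representative; second, upgrade the given c-unital quasi-isomorphism $hom_\A(L,L) \simeq B$ to a strictly unital one; and third, transport the $A_\infty$ structure of $\A$ across this quasi-isomorphism in a way that makes the $(L,L)$-component literally equal to $B$ on the nose.

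For the first step, Seidel's Lemma 2.1 from \cite{Seidel08} provides a minimal, strictly unital quasi-isomorphic representative of $\A$, so we may assume from the outset that $\A$ has both properties. The endomorphism algebra $hom_\A(L,L)$ is then a minimal strictly unital $A_\infty$ algebra that is c-unitally quasi-isomorphic to $B$. For the second step, Hasegawa's Theorem 3.2.2.1 from \cite{Hasegawa} rectifies this c-unital quasi-isomorphism to a strictly unital one $f : B \to hom_\A(L,L)$. Since both source and target are minimal, their underlying chain complexes have zero differential, so being a quasi-isomorphism forces $f^1$ to be a vector space isomorphism, although the higher Taylor coefficients $f^d$ for $d \geq 2$ need not vanish.

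For the main step, construct $\tilde{\A}$ by setting $\text{Ob}\,\tilde{\A} := \text{Ob}\,\A$, $hom_{\tilde{\A}}(L,L) := B$, and $hom_{\tilde{\A}}(L_0,L_1) := hom_\A(L_0,L_1)$ for all other ordered pairs. Look for an $A_\infty$-functor $F : \tilde{\A} \to \A$ which is the identity on objects, whose linear part $F^1$ equals $f^1$ on $B$ and the identity on the other morphism spaces, whose higher components $F^d$ agree with $f^d$ on pure $B$-input sequences, and which vanishes on every other input sequence for $d \geq 2$. With $F$ constrained this way the $A_\infty$-functor equations become a system that recursively determines the operations $\tilde{\mu}^d$: restricted to pure $B$-inputs the equation at level $d$ collapses to exactly the $A_\infty$-morphism equation for $f$, and is automatically solved by setting $\tilde{\mu}^d|_{B^{\otimes d}} := \mu_B^d$; on any other input sequence the left-hand side collapses to $F^1(\tilde{\mu}^d(x_d,\ldots,x_1))$, so invertibility of $F^1$ uniquely determines $\tilde{\mu}^d$. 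Minimality and strict unitality of $\tilde{\A}$ are then inherited from those of $\A$, $B$ and of the map $f$; the functor $F$ is a quasi-isomorphism because $F^1$ is a vector space isomorphism on every morphism space; and by construction $hom_{\tilde{\A}}(L,L)$ equals $B$ as an $A_\infty$ algebra, so the identity map is the required strict isomorphism.

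The main obstacle is the combinatorial verification that the operations $\tilde{\mu}^d$ so defined actually satisfy the $A_\infty$-associativity relations of a category. This is cleanest to handle in the bar-coalgebra formulation, where $F$ is rewritten as a map of DG-coalgebras; the existence of such a coalgebra map follows directly from $f$ being an $A_\infty$-morphism, and simultaneously encodes the $A_\infty$-equations for $\tilde{\A}$ and the functoriality of $F$.
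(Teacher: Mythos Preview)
Your argument is correct and follows essentially the same route as the paper's. Both proofs first pass to a minimal strictly unital model via \cite[Lemma 2.1]{Seidel08}, then upgrade the c-unital quasi-isomorphism on $hom(L,L)$ to a strictly unital one via \cite[Theorem 3.2.2.1]{Hasegawa}, and finally transport the $A_\infty$ structure across that quasi-isomorphism. The paper packages the last step using Seidel's notion of a \emph{formal diffeomorphism} (\cite[Section 1c]{Seidel08}): your functor $F$, with its prescribed Taylor coefficients, is precisely such a formal diffeomorphism, and the pushed-forward structure it determines is your $\tilde{\A}$. Invoking that machinery has the advantage that the $A_\infty$-associativity relations for $\tilde{\A}$ come for free, so the bar-coalgebra verification you flag as the ``main obstacle'' is already absorbed into the cited formalism. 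The only other cosmetic differences are the direction of the quasi-isomorphism (the paper takes $f: hom_{\A'}(L,L) \to B$ rather than $B \to hom_\A(L,L)$, which is immaterial since $f^1$ is invertible) and the precise vanishing condition imposed on the higher $F^d$: the paper asks that they vanish whenever any input is a strict unit, while you ask that they vanish on any non-pure-$B$ input sequence; your condition is strictly stronger and in particular implies the paper's, so strict unitality of $\tilde{\A}$ follows by the same easy check.
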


\begin{proof}
First fix an $A_\infty$ category $\A'$ that is minimal, strictly unital, and quasi-isomorphic to $\A$, with the same objects. The $A_\infty$ algebra $hom_{\A'}(L,L)$ is strictly unital, and quasi-isomorphic to the strictly unital algebra $B$. We can find a quasi-isomorphism between $hom_{\A'}(L,L)$ and $B$ that is strictly unital. Say it is given by maps
\bq f_n: hom_{\A'}(L,L)^{\otimes n} \to B.
\eq
As $\A'$ and $B$ are minimal, $f_1$ is just an automorphism, and the $f_n$ can be extended to a formal diffeomorphism (as defined in \cite[Section 1]{Seidel08}) on $\A'$. Moreover, we can require that the maps 
\bq hom_{\A'}(X_{d-1}, X_d) \otimes \ldots \otimes hom_{\A'}(X_0,X_1) \to hom_{\A'}(X_0, X_1)
\eq
giving the formal diffeomorphism vanish whenever $d \geq 2$, $X_i = X_{i+1}$ for some $i$, and the element of $hom_{\A'}(X_i, X_{i+1})$ we are plugging in is the strict unit. Call $\tilde{\A}$ the $A_\infty$ category produced by the formal diffeomorphism; it is easy to check that our requirement forces $\tilde{\A}$ to be strictly unital, and that the units are the images of the units of $\A'$. Thus $\tilde{A}$ is a suitable category.
\end{proof}

\begin{remark}
If $\A$ is finite or cohomologically finite, and (where applicable) $B$ is finite dimensional, then it is immediate that the quasi-isomorphic categories described above are also finite.
\end{remark}

%--------------------------------------------------------------------------------------
% Background on twisted complexes.

\subsection{Background on twisted complexes}

Let $\A$ be a finite, strictly unital $A_\infty$ category. We summarize the material we will use on twisted complexes in $\A$, as introduced by \cite{Kontsevich}. If $\A$ were not unital, one would need far more care; although most computations are routine and omitted, we have tried to flag all times that strict unitality is used.

\paragraph{Additive enlargement.}

The additive enlargement of $\A$, $\Sigma \A$, is an $A_\infty$ category whose objects are formal direct sums
\bq
X = \bigoplus_{i \in I} V^i \otimes X^i
\eq
where $I$ is a finite set, $\{X^i\}_{i\in I}$ a family of objects of $\A$, and $\{V^i\}_{i\in I}$ a family of finite dimensional $\Z_2$ vector spaces.
The morphisms between two of these objects are made up of morphisms between the constituant summands, tensored with the spaces of linear maps between the vector spaces: for instance,
\bq hom_{\Sigma\A} (V^0 \otimes X^0, V^1 \otimes X^1) = hom_{\Z_2}(V^0,V^1) \otimes hom_\A (X^0, X^1).
\eq
 The $A_\infty$ compositions maps are inherited from those of $\A$, combined with usual composition of linear maps. We will often denote both vector and $A_\infty$  morphism spaces by $hom$. The identity endomorphism of a vector space $V$ will be denoted $Id_V$ or $Id$, whereas the strict unit of $hom(L,L)$ will be denoted by $1_L$ or $1$.

\paragraph{Twisted complexes.}

We will work with the category of twisted complexes in $\A$, $Tw\A$.
Objects consist of pairs $(X, \delta_X)$, where $X \in \text{Ob}\Sigma\A$, and the connection (or differential) $\delta_X$ is an element of $hom_{\Sigma\A}(X,X)$ such that
\begin{itemize}
\item there is a finite, decreasing filtration by subcomplexes 
\bq X = F^0 X \supset F^1X \supset \ldots \supset F^nX =0
\eq
such that the induced connection on the quotients $F^i X / F^{i+1}X$ is zero. (Subcomplexes consist of objects $\bigoplus_{i\in I}W^i \otimes X^i$, where each $W^i$ is a vector subspace of $V^i$, that are preserved by the connection $\delta_X$.)
\item the connection satisfies the generalised Maurer-Cartan equation
\bq \sum_{r=1}^\infty \mu^r_{\Sigma\A} (\delta_X, \ldots, \delta_X) = 0.
\eq
\end{itemize}
The morphism spaces are the same as for $\Sigma \A$; hereafter the subscript will be dropped. A key difference with $\Sigma\A$ is that all the compositions are now deformed by contributions from the connections:
\bq 
\mu^d_{Tw\A} (a_d, \ldots, a_1)= \sum_{i_0, \ldots, i_d} \mu^{d + i_0 + \ldots + i_d}_{\Sigma\A} (\overbrace{\delta_{X_d}, \ldots, \delta_{X_d}}^{i_d}, a_d , 
\overbrace{\delta_{X_{d-1}}, \ldots, \delta_{X_{d-1}}}^{i_{d-1}}, a_{d-1},
 \ldots, a_1, 
\overbrace{\delta_{X_0}, \ldots, \delta_{X_0}}^{i_0})
\eq
where the sum is over all non-negative integers $i_k$. This makes $Tw\A$ into an $A_\infty$ category. We will denote the associated cohomology category by $H(Tw\A)$, and its morphism groups by $Hom_{Tw\A}$. 
$\A$ embeds into $Tw\A$ as a subcategory: each object $X \in A$ gives an object $(X, \delta_X = 0)$ of $Tw\A$.
If $\A$ is finite, $Tw\A$ clearly is too. Moreover, strict unitality of $\A$ implies strict unitality of $Tw\A$. 
$H(Tw\A)$ is a triangulated category (see \cite{GelfandManin}). 
notice that the shift functor $[1]$ is the identity.
Notably, this means there is a collection of distinguished triangles; for each such triangle, say 
\begin{equation} \xymatrix@C=2em{
 {A} \ar[rr]_{f} & & {B} \ar[dl]_{g} \\
 & {C} \ar[ul]_{h}  & }
\end{equation}
 and any $Z \in \text{Ob}Tw\A$, there are exact sequences
\begin{equation} 
\ldots \to Hom_{Tw\A}(Z,A) \to  Hom_{Tw\A}(Z, B)  \to Hom_{Tw\A}(Z, C) \to \ldots
\end{equation}
and
\begin{equation}
 \ldots \to {Hom_{Tw\A}(B, Z)} \to {Hom_{Tw\A}(A,Z)} \to {Hom_{Tw\A}(C, Z)} \to \ldots
\end{equation}
 in the cohomology category, 
where the maps are pre- and post-composition with $f$, $g$ and $h$.

\paragraph{Quotients and cones.}
Given a twisted complex $(X, \delta_X)$ and a subcomplex $(Y, \delta_X|Y)$, one can form the quotient complex by taking vector space quotients piece-wise; this inherits a connection from $X$. We denote this by $(X/Y, \delta_{X/Y})$.
Let $\pi: X \to X/Y$  be the quotient map, and $i: Y \to X$ the inclusion. These are cochains (this uses strict unitality), so make sense as a morphisms in the cohomology category $H(Tw\A)$. Moreover, there is a distinguished triangle 
\begin{equation} \xymatrix@C=2em{
 {X/Y} \ar[rr] & & {Y} \ar[dl]_{i} \\
 & {X} \ar[ul]_{\pi}  & }
\end{equation}
Consider a morphism between twisted complexes, say 
\bq c: X \to Y.
\eq
 Whenever $\mu^1_{Tw\A} (c) = 0 \in hom (X,Y)$, we can complete this to a distinguished triangle as follows: define the cone of $c$, itself an element of $Tw\A$:
\bq
Cone(c) = X \oplus Y
\eq
with 
\begin{equation}
\delta_{Cone(c)} = 
\left(
\begin{array}{ccc}
\delta_{X} & 0 \\
c & \delta_{Y}  
\end{array} \right)
\end{equation}
As we are working with $\Z_2$ coefficients and no gradings, there is not need for the shifts or negative signs that the reader might be used to. 
Together with $c$, the obvious maps $Cone(c) \to X$ (projection) and $Y \to Cone(c)$ (inclusion) fit into a distinguished triangle.

\begin{lem}\label{th:isobetweentwosummands}
Suppose $(X, \delta_X)$ is a twisted complex, $(Y, \delta_X|Y)$ a subcomplex, and that $Y$ sits in a distinguished triangle with $A \xrightarrow{f} B$, such that $f$ is an isomorphism. Then $Y$ is acyclic, and $\pi : X \to X/Y$ is an isomorphism in $H(Tw(\A))$. 
\end{lem}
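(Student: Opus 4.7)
The plan is to use the two long exact sequences (from $Hom_{Tw\A}(Z,-)$ applied to distinguished triangles) that were stated just above the lemma, together with a Yoneda-type argument in the cohomology category $H(Tw\A)$.

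First I would show that $Y$ is acyclic, i.e. quasi-isomorphic to the zero object in $H(Tw\A)$. The hypothesis is that $Y$ sits in a distinguished triangle
\begin{equation}
\xymatrix@C=2em{
A \ar[rr]^{f} & & B \ar[dl] \\
& Y \ar[ul] & }
\end{equation}
with $f$ an isomorphism in $H(Tw\A)$. For any object $Z \in \text{Ob}\,Tw\A$, applying the covariant Hom functor gives the long exact sequence
\begin{equation}
\cdots \to Hom(Z,A) \xrightarrow{f_\ast} Hom(Z,B) \to Hom(Z,Y) \to Hom(Z,A) \xrightarrow{f_\ast} Hom(Z,B) \to \cdots
\end{equation}
Since $f$ is an isomorphism, $f_\ast$ is an isomorphism on both sides, so the middle term $Hom(Z,Y)$ vanishes for every $Z$. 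Specializing to $Z=Y$, we obtain $Hom(Y,Y)=0$, forcing the identity $1_Y$ to be cohomologically zero; hence $Y$ is acyclic.

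Next I would deduce the second claim from the distinguished triangle given just before the lemma,
\begin{equation}
\xymatrix@C=2em{
X/Y \ar[rr] & & Y \ar[dl]_{i} \\
& X \ar[ul]_{\pi} & }
\end{equation}
The associated long exact sequence for any $Z$ reads
\begin{equation}
\cdots \to Hom(Z,Y) \xrightarrow{i_\ast} Hom(Z,X) \xrightarrow{\pi_\ast} Hom(Z,X/Y) \to Hom(Z,Y) \to \cdots
\end{equation}
By the previous step, the outer groups vanish, so $\pi_\ast$ is an isomorphism for every $Z$. A standard Yoneda argument inside $H(Tw\A)$ (choose $Z=X/Y$ and consider the class of $1_{X/Y}$ to build a two-sided inverse to $\pi$) then shows that $\pi$ is an isomorphism in $H(Tw\A)$.

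There is no real obstacle here: the argument is a direct formal consequence of the triangulated structure on $H(Tw\A)$ recorded earlier. The only thing to check with a bit of care is that pre- and post-composition by representatives of $\pi$ and its cohomological inverse indeed yield identities on the nose in cohomology, which follows from the fact that $\pi$ and $i$ are honest cochains (as noted in the excerpt, this uses strict unitality of $\A$).
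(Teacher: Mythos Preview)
Your proof is correct and follows essentially the same route as the paper: both use the long exact sequence from the triangle $A \xrightarrow{f} B \to Y$ to conclude $Hom_{Tw\A}(Z,Y)=0$ for all $Z$ (hence $Y \cong 0$), and then the long exact sequence from the triangle $X/Y \to Y \to X$ together with Yoneda to deduce that $\pi$ is an isomorphism. The paper simply invokes ``Yoneda's lemma'' twice where you have spelled out the specialization $Z=Y$ and the construction of an inverse via $Z=X/Y$.
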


\begin{proof}
Consider the distinguished triangle
\begin{equation} \xymatrix@C=2em{
 {A} \ar[rr]_{f} & & {B} \ar[dl] \\
 & {Y} \ar[ul]  & }
\end{equation}
For each $Z \in \text{Ob}Tw(\A)$ there is an exact sequence
\begin{equation} \ldots \to
 {Hom_{Tw\A}(Z,A)} \xrightarrow f_{\ast} {Hom_{Tw\A}(Z, B)} \to
 {Hom_{Tw\A}(Z, Y)} \to \ldots
\end{equation}
and by hypothesis, post-composition $f_{\ast}$ is an isomorphism; thus, by Yoneda's lemma, $Y$ is isomorphic to 0 in $H(Tw(\A))$. Considering the distinguished triangle
\begin{equation} \xymatrix@C=2em{
 {X/Y} \ar[rr] & & {Y} \ar[dl] \\
 & {X} \ar[ul]  & }
\end{equation}
another use of Yoneda's lemma gives the desired conclusion.
\end{proof}

\paragraph{Evaluation maps.}

For any pair $X, Y \in \text{Ob }Tw\A$, as $hom(X, Y)$ is finite, $hom(X,Y) \otimes X$ is an element of $\Sigma\A$; moreover, setting 
$$\delta_{hom(X,Y) \otimes X} = \mu_{Tw\A}^1\otimes 1+ Id \otimes \delta_X
$$
gives it the structure of a twisted complex. (This uses strict unitality; see e.g.~\cite{Seidel08}, section (3o).) 
There are canonical isomorphisms
\begin{multline} hom (hom(X,Y)\otimes X, Y) \cong hom_{\Z_2}(hom(X,Y), \Z_2) \otimes hom(X,Y)
\\ \cong hom(X,Y)^\text{v} \otimes hom(X,Y) 
\cong End_{\Z_2} \big( hom(X,Y) \big).
\end{multline}
The evaluation map 
\bq
ev: hom(X,Y)\otimes X \longrightarrow Y
\eq
corresponds the the identity in $End_{\Z_2} \big( hom(X,Y) \big)$: for any $\Z_2$-vector space basis for $hom(X,Y)$, say $e_i$, we have
\bq ev = \sum e_i^\text{v} \otimes e_i
\eq
One can the check that $\mu^1_{Tw\A}(ev)=0$. 

(This too uses strict unitality of $Tw\A$.) Thus we can talk about its cone, the twisted complex $Cone(ev)$. 
We shall later use the following observation:
\begin{lem}\label{th:evmaps}
Let $X_i$ be objects of $\Sigma\A$, for $i=0, \ldots, r$. We denote by $ev_i$ the evaluation map
\bq hom(X_i,X_{i+1}) \otimes X_i \to X_{i+1}
\eq
and let 
\bq Z_i := hom(X_{r-1},X_r) \otimes \ldots \otimes hom(X_i,X_{i+1}) \otimes X_i.
\eq
Tensoring with the identity, $ev_i$ extends to a map $Z_i \to Z_{i+1}$, that we also denote by $ev_i$. Fix $W \in \text{Ob}\Sigma\A$, and let $a_r \otimes \ldots \otimes a_{i+1} \otimes b$ denote an element of
\bq hom(W, Z_i) = hom(X_{r-1}, X_r) \otimes \ldots \otimes hom(X_i, X_{i+1}) 
\otimes hom(W, X_i).
\eq
Then
\bq
\mu^{r-i+1}_{\Sigma\A} (ev_r, ev_{r-1}, \ldots,  ev_i , a_r\otimes a_{r-1} \otimes \ldots \otimes a_{i+1} \otimes b) 
=
\mu^{r-i+1}_{\Sigma\A} (a_r, a_{r-1}, \ldots, a_{i+1}, b) \in hom(W, X_r).
\eq
\end{lem}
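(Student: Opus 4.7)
The plan is a direct computation in $\Sigma\A$. By multilinearity in the $W$-slot I may reduce to the case $W = \Z_2 \otimes W_0$ with $W_0 \in \text{Ob}\A$, so that $a_r\otimes\cdots\otimes a_{i+1}\otimes b$ is a genuine simple tensor, with vector-space component $v := a_r \otimes \cdots \otimes a_{i+1} \in V_i$ and $\A$-morphism component $b \in hom_\A(W_0, X_i)$, where $V_i = hom(X_{r-1},X_r) \otimes \cdots \otimes hom(X_i, X_{i+1})$ denotes the vector-space factor of $Z_i$.

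First I would expand each evaluation map in a basis: fix a basis $\{e_k^{(j)}\}$ of $hom_\A(X_j, X_{j+1})$, so that $ev_j = \sum_k (e_k^{(j)})^\vee \otimes e_k^{(j)}$. After tensoring with identities on the remaining vector-space factors, the extended map $Z_j \to Z_{j+1}$ becomes $\sum_k \bigl(Id_{V_{j+1}} \otimes (e_k^{(j)})^\vee\bigr) \otimes e_k^{(j)}$; its linear-map factor leaves all earlier vector-space slots untouched and contracts the final slot against $(e_k^{(j)})^\vee$. The key structural observation I would invoke is that $A_\infty$ composition in $\Sigma\A$ splits as a tensor product: for composable morphisms of the form $\phi_s \otimes g_s$, with $\phi_s$ a linear map and $g_s$ a morphism in $\A$,
\[
\mu^d_{\Sigma\A}(\phi_d \otimes g_d, \ldots, \phi_1 \otimes g_1) \;=\; (\phi_d \circ \cdots \circ \phi_1) \otimes \mu^d_\A(g_d, \ldots, g_1),
\]
extended multilinearly to general inputs.

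Applying this splitting to the left-hand side produces a sum indexed by tuples $(k_{r-1}, \ldots, k_i)$ of a linear-map-composition part evaluated on $v$, tensored with $\mu^{r-i+1}_\A(e_{k_{r-1}}^{(r-1)}, \ldots, e_{k_i}^{(i)}, b)$. The composition of linear-map parts strips off tensor factors one at a time, each step producing the scalar $(e_{k_j}^{(j)})^\vee(a_{j+1})$, so overall it contributes $\prod_{j=i}^{r-1} (e_{k_j}^{(j)})^\vee(a_{j+1})$. Invoking multilinearity of $\mu^{r-i+1}_\A$ together with the tautology $\sum_k (e_k^{(j)})^\vee(a)\, e_k^{(j)} = a$ collapses the sum to $\mu^{r-i+1}_\A(a_r, \ldots, a_{i+1}, b)$, which is precisely the right-hand side.

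The main obstacle is purely notational bookkeeping: tracking the ordering of vector-space tensor factors and verifying that each contraction $(e_{k_j}^{(j)})^\vee$ lands on the slot originally labelled by $hom(X_j, X_{j+1})$. There is no conceptual subtlety beyond the splitting formula for $\mu^d_{\Sigma\A}$; a basis-free formulation, viewing $ev_j$ as the identity of $hom(X_j,X_{j+1})$ under the canonical pairing, would work equally well but obscures the contraction mechanism.
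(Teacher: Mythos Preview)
Your proposal is correct and follows essentially the same approach as the paper: expand each $ev_j$ in a basis, use the splitting formula $\mu^d_{\Sigma\A}(\phi_d\otimes g_d,\ldots,\phi_1\otimes g_1) = (\phi_d\circ\cdots\circ\phi_1)\otimes\mu^d_\A(g_d,\ldots,g_1)$, and collapse via multilinearity. The paper only spells out the case $r-i+1=2$ and declares the rest an exercise in definitions, whereas you carry out the general case; the one notational slip is writing $hom_\A(X_j,X_{j+1})$ where $hom_{\Sigma\A}$ is meant, but this does not affect the argument.
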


\begin{proof}
This is nothing more than an exercise in definitions. For instance, in the simplest case, we have $a \in hom(X_0, X_1)$, $b \in hom(W, X_0)$, and 
\bq
ev: hom(X_0, X_1) \otimes X_0 \to X_1.
\eq
Write $ev = \sum e_i^\text{v} \otimes e_i \in hom(X_0, X_1)^\text{v} \otimes hom(X_0, X_1)$, for some basis $e_i$ of $hom(X_0, X_1)$, and $a = \sum a_i \cdot e_i$, some scalars $a_i$. We have
\begin{eqnarray}
\mu^2_{\Sigma \A} (ev, a \otimes b) & = & \mu^2_{\Sigma \A} (\sum e_i^\text{v} \otimes e_i, a \otimes b) \\
& = &  \sum \mu^2_{\Sigma \A} ( e_i^\text{v} \otimes e_i, a \otimes b) \\
& = &  \sum ( e_i^\text{v} \circ a ) \mu^2_\A (e_i , b) \\
& = & \sum a_i \mu^2_{\A} (e_i, b) \\
& = & \mu^2_{\A} (\sum a_i \cdot e_i, b) \\
& = & \mu^2_{\A} (a, b).
\end{eqnarray}
\end{proof}

%% FUKAYA PRELIMINARIES

\section{The Fukaya category of an exact manifold: preliminaries}\label{sec:Fukayapreliminaries}

Let $(M^{2n},\omega, \theta)$ be an exact symplectic manifold of dimension $2n >2$ with contact type boundary, and $J$ an $\omega$-compatible almost complex structure of contact type near the boundary. 
Unless otherwise specified, all Lagrangians will hereafter be assumed to be exact, compact, and disjoint from the boundary. Note Lagrangian spheres are automatically exact.

We will use the Fukaya category $\Fuk(M)$ of $M$, as introduced in \cite{Fukaya93}; we follow the exposition of \cite[Section 9]{Seidel08}. 
 $\Fuk(M)$ is an $A_\infty$ category with objects the Lagrangians of $M$, and morphisms Floer chain groups between Lagrangians. Our Floer chain complexes have $\Z_2$ coefficients, and there are no gradings.
We start by giving a rapid overview of the set-up, calling attention to some of the features we shall use (subsections \ref{sec:Floercohomology} and \ref{sec:Fukayacategory}). We then describe the $A_\infty$ algebra structure of $CF(L,L)$, for a Lagrangian sphere (subsection \ref{sec:spherestructure}). Finally, subsection \ref{sec:isomorphismcriteria} gives some criteria for two Lagrangian spheres to be quasi-isomorphic as objects of the Fukaya category.

\subsection{Floer cohomology}\label{sec:Floercohomology}

Fix Lagrangians $L_0$ and $L_1$.
Let $\H = C^{\infty}_c (\text{Int}(M), \R)$, and let  $\J$ be the space of all $\omega$-compatible almost complex structures that agree with $J$  near the boundary of $M$.

 A \emph{Floer datum} for the pair $(L_0, L_1)$ consists of $ J_{L_0, L_1} \in C^{\infty}([0,1], \J)$ and $H_{L_0, L_1} \in C^{\infty}([0,1], \H)$ 
such that $\phi^1(L_0) \pitchfork L_1$, where $\phi$ is the flow of the time-dependent Hamiltonian vector field $X$ corresponding to $H$. Let 
\bq
\CC(L_0, L_1) = \{ y: [0, 1] \to M \, | \, y(0) \in L_0, \,y(1) \in L_1, \,dy/dt = X(t, y(t)) \}.
\eq  Its elements are naturally in bijection with the set $\phi^1(L_0) \pitchfork L_1$. 

Given a Floer datum, the Floer cochain group $CF(L_0, L_1)$ is the $\Z_2$ vector space generated by the elements of $\CC(L_0, L_1)$. One equips it with a differential, $\mu^1$, that counts certain `pseudo--holomorphic strips' as follows: fix $y_0$, $y_1 \in \CC(L_0, L_1)$. Let $Z$ be the Riemann surface $\R \times [0,1] \subset \C$, with coordinates $(s,t)$. $\M_Z(y_0, y_1)$ is the set of maps $u \in C^\infty(Z, M)$ satisfying:
\begin{itemize}
\item Floer's equation: $\partial_s u + J(t,u(s,t))(\partial_t u - X(t,u)) =0$ 
\item boundary conditions: $u(s,0) \in L_0$, $u(s,1) \in L_1$ 
\item asymptotic conditions: $\text{lim}_{s \to + \infty} u(s, \cdot) = y_1$, $\text{lim}_{s\to -\infty} u(s, \cdot) = y_0.$ 
\end{itemize}
There is an $\R$-action on $\M_Z(y_0, y_1)$: translation in the $s$-variable. For $y_0 \neq y_1$, this action is free; let $\M^\ast_Z(y_0, y_1)$ be the quotient space. For $y_0 = y_1$, $\M_Z(y_0, y_1)$ is a point, because of exactness, and we set $\M^\ast_Z (y_0, y_1) = \emptyset$.

Additionally, we put a constraint on our Floer datum: we require that the space $\M_Z(y_0, y_1)$ defined above be \emph{regular}; this is a property that implies that it is smooth and of the appropriate, expected dimension, given by the index of a certain surjective Fredholm operator. This condition is called transversality; see e.g. \cite{FloerHoferSalamon, Oh}. It is satisfied for a generic choice of Floer datum. Note that we are free to choose any $H_{L_0, L_1}$ such that $\phi^1(L_0) \pitchfork L_1$, as long as we don't want to exerce any additional control on $J_{L_0, L_1}$. 

Given a regular Floer datum, the boundary operator on $CF(L_0, L_1)$, called the Floer differential, is given by
\bq
\mu^1(y_1) = \sum_{y_0} \# \M^\ast_Z (y_0, y_1) \cdot y_0
\eq
where $\#$ counts the number of isolated points mod 2.

\paragraph{Properties.}

The boundary operator $\mu^1$ is a differential. The associated cohomology, 
\bq
H(CF(L_0, L_1), \mu^1) =:HF(L_0, L_1)
\eq
is called the Lagrangian Floer cohomology of $L_0$ and $L_1$. 

$HF(L_0, L_1)$  is independent of auxiliary choices, up to canonical isomorphism. It is independent of the choice of $\theta$ away from $\partial M$: we could have chosen instead $\theta + df$, for any $f \in C^\infty(M, \R)$ supported away from $\partial M$. Also, it is finite as a $\Z_2$ vector space. It is invariant under exact Lagrangian isotopy of $L_0$ or $L_1$. 
\begin{rmk}
Whenever the Floer cochain group can be graded by Maslov indices, as in e.g. \cite{RobbinSalamon}, $\mu^1$ increases the degrees by one. This is why we talk of Floer \emph{co}chains and \emph{co}homology.
\end{rmk}
For an two Lagrangians $L_0$ and $L_1$, we will denote by $hf(L_0,L_1)$ the dimension of $HF(L_0,L_1)$ as a $\Z_2$ vector space.

\paragraph{Duality.}

Suppose $(H_{L_0,L_1}, J_{L_0,L_1})$ is a regular Floer datum for the pair $(L_0, L_1)$. Then a regular Floer datum for $(L_1, L_0)$ is given by
\bq
H_{L_1, L_0} (t) = - H_{L_0, L_1} (1-t) \qquad J_{L_1, L_0}(t) =  J_{L_0, L_1}(1-t).
\eq
We will refer to this as the `dual Floer datum' for $(L_1, L_0)$. 
Given these choices, there is a one-to-one correspondance between generators of $CF(L_0, L_1)$ and $CF(L_1, L_0)$, and between the Floer discs counted by their differentials $\mu^1$. These identify $CF(L_1,L_0)$ with the dual complex of $CF(L_0, L_1)$; we will call corresponding generators `dual' to one another.  One gets a vector space isomorphism $HF(L_0, L_1) \cong (HF(L_1, L_0))^\text{v}$. 

%----------------------- 
% Fukaya category.

\subsection{The Fukaya category}\label{sec:Fukayacategory}

Suppose you have already chosen a regular Floer datum $(H_{L_0, L_1}, J_{L_0, L_1})$ for each pair of Lagrangians $(L_0, L_1)$. The Fukaya category has objects the Lagrangians of $M$, and morphisms Floer chain groups between Lagrangians. We want to define maps
\bq
\mu^d: \, CF(L_{d-1}, L_{d}) \otimes CF(L_{d-2}, L_{d-1}) \otimes \ldots \otimes 
CF(L_0, L_1) \to CF(L_0, L_d)
\eq
for $d \geq 2$, that, together with the Floer differential, should satisfy the $A_\infty$ associativity equations for each $n$. 
Roughly speaking, the $\mu^d$ are obtained by counting certain `pseudo-holomorphic' discs with $d+1$ boundary marked points. Here are a few more details. 

Fix $d$. Let $\D$ be a closed unit complex disc with some $d+1$ boundary points removed; label these $\zeta_0, \ldots, \zeta_d$, ordered anti-clockwise. $\zeta_0$ will be known as an incoming point, and $\zeta_i$ ($i \geq 1$) as outgoing ones. Consider the half-infinite holomorphic strips $\R^{\pm} \times [0,1] \subset \C$. 
Equip $\D$ with \emph{strip-like ends}: proper holomorphic embeddings
\begin{center}
$\e _{0}: \R^- \times [0,1] \to \D$ \, and \,
$\e_{j}: \R^+ \times  [0,1] \to \D$ for $j = 1, \ldots, d$
\end{center}
with disjoint images,  such that 
$\e_j^{-1}(\partial \D) = \R^{\pm} \times \{0,1\}$ and $\text{lim}_{s \to \pm \infty} \e_j (s, \cdot) = \zeta_j$ for $j = 0, \ldots, d$. 

  The boundary $\partial \D$ consists of $d+1$ connected components; let $C_i$ be the segment between $\zeta_i$ and $\zeta_{i+1}$, with indices taken modulo $d+1$. \emph{Lagrangian labels} for $D$ are the asignment of a Lagrangian, say $L_i$, to each $C_i$.

A \emph{perturbation datum} for $\D$ is a pair $(K, J)$ where $J \in C^\infty(\D, \J)$ and $K \in \Omega^1(\D, \H)$ such that $K(\xi)|L_j = 0$ for all $\xi \in TC_j \subset T(\partial S)$, any $j$. 
We require it to be compatible with choice of strip-like ends and the Floer data, in the following sense:
\bq
\e_{\zeta_j}^\ast K = H_{\zeta_j} (t) dt, \quad J(\e_{\zeta_j}(s,t)) = J_{\zeta_j}(t)
\eq
for all $j$, and all $(s,t) \in \R^{\pm} \times [0,1]$. 

Let $Y \in \Omega^1 (\D, C^\infty (TM))$ be the Hamiltonian vector field valued one-form associated to $K$. Given $a_i \in CF(L_{i-1}, L_1)$, the naive approach would be to define $\mu^d(a_d, \ldots, a_1)$ by counting solutions $u \in C^\infty( \D , M) $ to the generalized Floer equation
\bq
Du(z) + J(z,u) \circ Du(z) \circ i = Y(z,u) + J(z,u) \circ Y(z,u) \circ i
\eq
with boundary conditions given by the Lagrangian labels $L_i$ and asymptotic conditions given by the $a_i $, and to sum over all possible $\D$ (allowing the marked points to move). As with Floer's equation, one would additionally need the perturbation data to be regular.

The problem is that for the $A_\infty$ relations to be satisfied, the auxiliary data needs to be chosen far more carefully. There are various solutions to this, one of which (\cite{Seidel08}) uses strip-like ends and perturbation data defined on classifying spaces for families of holomorphic discs with $d+1$ marked points, called `universal' choices. It is shown that these choices can be made to be `consistent': roughly, this enables one to carry through the requisite gluing arguments. Analogously to the discussion before, the consistent universal choice of perturbation data is required to be compatible with the Floer data and universal choice of strip-like ends.
These choices are generically regular.

The construction of such strip-like ends and perturbation data starts with any choice of regular Floer data, and inducts on $d$. We note the following feature of this induction: for a fixed $d$, and suppose we are looking to equip discs with $d+1$ marked points with adequate perturbation data. For a given collection of Lagrangian labels, the choice of perturbation datum made in the induction only depends on the data for discs with $k+1$ marked points, $k < d$, and Lagrangian labels an ordered subset of our collection.

\paragraph{Properties.}

This gives an $A_\infty$ category. Different admissible choices of auxiliary data (strip-like ends, Floer and perturbation data) give a quasi-isomorphic $A_\infty$ category; the quasi-isomorphism fixes the objects. Working instead with $(M, \omega, \theta +df)$, for some $f \in C^{\infty}(M, \R)$ with support away from $\partial M$, gives an isomorphic $A_\infty$ category.
Thus the product that the cohomology category $H(\Fuk(M))$ inherits from $\mu^2$ is independent of all these choices. Moreover, with this structure $H(\Fuk(M))$ is  unital (it is a linear category), which means that $\Fuk(M)$ is c-unital.  In particular, it makes sense to talk about quasi-isomorphic objects (see section \ref{sec:isomorphismcriteria}); notice that any two Lagrangians that are isotopic through exact Lagrangians are quasi-isomorphic as objects of $\Fuk(M)$. 

%------- A_\infty algebra structure of exact Lagrangian sphere

\subsection{$A_\infty$ algebra associated to a Lagrangian sphere.}\label{sec:spherestructure}

\paragraph{Zero-section of a cotangent bundle.}

Let $L$ be a sphere of dimension at least 2. $T^\ast L$ has a standard exact symplectic structure; denote the usual one-form by $\alpha$, and by $\omega_L = d \alpha$ the symplectic form. The zero-section is an exact Lagrangian; we will simply denote it by $L$. Fix a metric on $L$; for any $\delta>0$, the closed disc bundle of radius $\delta$, say $B_\delta$, is an exact sympectic manifold with contact type boundary. We can find an $\omega$-compatible almost complex structure $j$, of contact type near the boundary.

\begin{lemma}\label{th:zerosectioncotangent}
Let $A$ be the $A_\infty$ algebra on two generators given by $\Z_2[\e]/(\e^2)$, with $\mu^2(1,\e) = \mu^2(\e,1) = \e$, and all other $A_\infty$ structure maps trivial. $CF_{B_\d}(L,L)$ is quasi-isomorphic to $A$.
\end{lemma}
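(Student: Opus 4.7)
The plan is to compute $CF_{B_\d}(L,L)$ using a Morse-type perturbation coming from a perfect Morse function on $S^n$, then pass to a minimal strictly unital $A_\infty$-model and kill all remaining potentially nonzero higher products via a grading argument.

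First, I would pick a perfect Morse function $f : S^n \to \mathbb{R}$ (two critical points, a minimum and a maximum), and use a Hamiltonian of the form $H = \lambda(f \circ \pi)$, with $\pi : B_\d \to L$ the projection and $\lambda > 0$ small (together with an appropriate time-dependent modification near $\partial B_\d$ to obtain a valid Floer datum). The generators of $CF_{B_\d}(L,L)$ are then in bijection with the critical points of $f$; call them $1$ (from the minimum) and $\e$ (from the maximum). By the classical computation of Floer cohomology of the zero section of a cotangent bundle (Floer, Pozniak, Fukaya--Oh), the Floer differential $\mu^1$ vanishes and the $\mu^2$-induced product on cohomology coincides with cup product, giving the ring $HF(L,L) \cong H^\ast(S^n;\Z_2) = \Z_2[\e]/(\e^2)$.

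Next, I would apply Lemma \ref{th:nicerepresentative} to obtain a quasi-isomorphic minimal, strictly unital $A_\infty$-algebra with underlying vector space $\Z_2\{1,\e\}$, with the class of the minimum serving as strict unit. Strict unitality forces $\mu^2(1,1) = 1$, $\mu^2(1,\e) = \mu^2(\e,1) = \e$, and $\mu^d(\ldots, 1, \ldots) = 0$ for all $d \neq 2$. It remains to show $\mu^2(\e,\e) = 0$ and $\mu^d(\e,\ldots,\e) = 0$ for $d \geq 3$. To do this, I would promote the algebra to a $\Z$-graded one: since $n \geq 2$, $S^n$ is simply connected, and $T^\ast S^n$ has trivial canonical bundle, so the zero section admits a grading, and the Floer complex carries a natural $\Z$-grading with $|1| = 0$ and $|\e| = n$. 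With the convention $|\mu^d| = 2 - d$, the image $\mu^d(\e,\ldots,\e)$ lies in degree $d(n-1) + 2$, which for $n \geq 2$ and $d \geq 2$ is at least $2n > n$; hence it cannot equal $0$ or $n$ and must vanish. The same degree counting shows $\mu^1 = 0$. This precisely matches the structure of $A$.

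The main subtlety is the grading step, since the paper's framework is otherwise ungraded. I would address it either by invoking graded Lagrangian theory on $T^\ast S^n$ (where $2c_1 = 0$ and $\pi_1(L) = 0$ supply a canonical $\Z$-lift of the usual $\Z/2$-grading), or bypass it altogether by appealing to the Fukaya--Oh identification of the Floer $A_\infty$-structure of the zero section with the Morse $A_\infty$-structure of $(S^n, f)$: for a perfect Morse function, direct dimension counts on moduli of gradient trees (with inputs and output of Morse index $0$ or $n$) force the higher Morse products to vanish, so the same holds for the Floer $A_\infty$-structure.
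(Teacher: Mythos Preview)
Your approach matches the paper's own treatment: it sketches exactly this argument (PSS for the ring structure, pass to a minimal strictly unital model, then use the $\Z$-grading available on $T^\ast S^n$ since $c_1=0$ and $L$ has Maslov class zero to force the higher products to vanish), and then defers to Abouzaid \cite{Abouzaid11} for a complete proof rather than spelling one out. One small correction: Lemma~\ref{th:nicerepresentative} presupposes the target algebra $B$, so it is circular here; the fact you actually want is the one quoted just before it from \cite[Lemma~2.1]{Seidel08}, that any c-unital $A_\infty$ algebra admits a minimal strictly unital model (and this transfer can be done compatibly with the grading).
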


Many variations or partial forms of this statement exist in the literature, going back to \cite{FukayaOh}. One might think of this fact as a consequence of two features of Floer cohomology:

\begin{itemize}
\item {\bf PSS isomorphism.} $HF(L,L)$ is isomorphic, as a ring, to the usual (e.g. simplicial or singular) cohomology ring $H^\ast (L; \Z_2)$. The maps between them are usually called `PSS isomorphisms'. For Lagrangian Floer theory, these were constructed by Albers \cite{Albers}, following work of Piunikhin, Salamon and Schwarz \cite{PSS} for Hamiltonian Floer theory. Albers does not discuss the ring structure, but one can check that \cite{PSS}'s arguments carry over -- see \cite{KMS}.
\item {\bf Gradings.} $c_1(T^\ast L) = 0$, and the zero section $L$ has Maslov class zero. $CF(L,L)$ carries a canonical $\Z$ grading (\cite[Theorem 2.3]{Fukaya97}), giving it the structure of a \emph{graded} $A_\infty$ algebra. 
\end{itemize}
To prove Lemma \ref{th:zerosectioncotangent}, it would be enough to check that the PSS isomorphisms are compatible with these gradings. Why? $CF(L,L)$ is certainly isomorphic to $A$ as a differential algebra, by PSS. Take a minimal, strictly unital model for $CF(L,L)$; it has a distinguished pair of generators, which, abusing notation slightly, we denote by $1$ and $\e$.  Compatibility with gradings would mean that 1 has grading 0, and $\e$ has grading $dim(L) \geq 2$. Together with strict unitality, these imply that all higher $A_\infty$ products must be zero.
\newline

Recent work of Abouzaid \cite{Abouzaid11} includes a detailed proof of Lemma \ref{th:zerosectioncotangent} (using a different strategy), so we do not discuss the above further.
\newline

Recall that when defining $CF(L,L)$, we are free to choose the Hamiltonian perturbation for the pair $(L,L)$. It will sometimes be useful to make the following choice: fix a Morse function on $L$ with two critical points, say $h$. 
Let $\psi: \R \to \R$ be a bump function centered at 0, with support on $[-\d/2, \d/2]$. Let $H$ be the function on $B_{\d}$ given by $H(q, p) = h(q) \psi(||p||)$, where $q \in L$, $p \in T^\ast_q L$, and $||\cdot||$ is our choice of metric. 
Set $H_{L,L}(t) = H$, for all $t \in [0,1]$, to be the Hamiltonian for $(L, L)$. 
Provided $h$ is sufficiently small, its critical points are by construction in one-to-one correspondence with generators of $CF(L, L)$. We assume this to be the case. Let $x_M$ be the critical point corresponding to the maximum, and $x_m$ the one corresponding to the minimum. One can check from the Morse cohomology gradings, and e.g. Abouzaid's proof, that on the level of cohomology, $x_M$ corresponds to 1, and $x_m$ to $\e$. (In our later discussion it will only actually matter that one critical point  can be identified with 1 and the other with $\e$.) We shall make use of this in the proof of Proposition \ref{th:multiplybye}.

\paragraph{Arbitrary exact Lagrangian sphere.} 
Let $L \subset M$ be a Lagrangian sphere. We claim that

\begin{prop}\label{th:lagspherestructure}
  $CF(L, L)$ is isomorphic to $A$ as an $A_\infty$ algebra.
\end{prop}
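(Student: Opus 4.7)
My plan is to reduce the statement to the cotangent bundle case handled by Lemma \ref{th:zerosectioncotangent}. The key geometric input is the Weinstein Lagrangian neighborhood theorem: since $L \subset M$ is an exact Lagrangian sphere, there exist $\delta > 0$ and a symplectic embedding $\iota\colon (B_\delta, \omega_L) \hookrightarrow (M, \omega)$ with $\iota(L_{\text{zero}}) = L$, whose image is a Liouville subdomain. Intuitively, Floer theory of $L$ with itself should only ``see'' what happens in a neighborhood of $L$, so we should be able to identify $CF_M(L,L)$, with its full $A_\infty$ structure, with $CF_{B_\delta}(L,L)$, and then quote Lemma \ref{th:zerosectioncotangent}.

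More precisely, the plan is to invoke the Abouzaid-type restriction principle (Lemma \ref{th:Abouzaid}, cited later in the paper) for the Liouville embedding $\iota$. This lemma asserts that the structure of the Floer complex between Lagrangians contained in the subdomain is preserved by the ambient embedding. Applied to the single Lagrangian $L$ sitting inside $\iota(B_\delta) \subset M$, and combined with the fact that the higher $A_\infty$ operations are built from $J$-holomorphic polygons with boundary on $L$, this gives a quasi-isomorphism of $A_\infty$ algebras $CF_M(L,L) \simeq CF_{B_\delta}(L,L)$. Composing with the quasi-isomorphism $CF_{B_\delta}(L,L) \simeq A$ of Lemma \ref{th:zerosectioncotangent} yields a quasi-isomorphism $CF_M(L,L) \simeq A$. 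Since $A$ is minimal and strictly unital, and $\Fuk(M)$ is c-unital, one may then pass to a minimal, strictly unital representative and invoke Lemma \ref{th:nicerepresentative} to upgrade the quasi-isomorphism to a strict isomorphism with $A$ on a suitable model, matching the wording of the statement.

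The main obstacle is justifying the locality of the Fukaya $A_\infty$ operations, i.e.\ that they can be computed inside $B_\delta$. This requires choosing the Floer and perturbation data for configurations involving $L$ so that the relevant Hamiltonian perturbations are compactly supported in $\iota(B_\delta)$, and the almost complex structures are of contact type along $\partial B_\delta$; an integrated maximum principle then confines all $J$-holomorphic polygons with boundary on $L$ to the subdomain. One must verify that Seidel's consistency conditions for universal strip-like ends and perturbation data can be met with this additional constraint, so that the resulting moduli counts for $CF_M(L,L)$ literally coincide with those for $CF_{B_\delta}(L,L)$. Granting this—which is the content of Abouzaid's lemma—the identification of the $A_\infty$ structure follows immediately, and Lemma \ref{th:zerosectioncotangent} finishes the argument. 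Note that one could not attempt to directly import the PSS/grading argument used in the cotangent case, since for a general $M$ one has neither $c_1(M)=0$ nor a canonical $\Z$-grading on $CF_M(L,L)$; the reduction to the cotangent neighborhood is precisely what restores these features.
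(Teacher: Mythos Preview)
Your proposal is correct and follows essentially the same route as the paper: Weinstein neighborhood, extension of Floer and perturbation data from $B_\delta$ to $M$ with almost complex structure of contact type along $\partial B_\delta$, confinement of discs via Lemma~\ref{th:Abouzaid}, verification that this data fits into a consistent universal choice for $\Fuk(M)$, and then reduction to Lemma~\ref{th:zerosectioncotangent}. The only addendum is that the paper obtains $CF_{B_\delta}(L,L) = CF_M(L,L)$ literally as $A_\infty$ algebras with these choices, so the appeal to Lemma~\ref{th:nicerepresentative} is unnecessary here (it is used later, in Section~\ref{sec:generalisation}, to arrange $hom_\A(L,L)=A$ strictly in a chosen model of the whole category).
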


As before, let $B_\delta$ denote the disc bundle of radius $\delta$ of $T^\ast L$. 
By Weinstein, there exists $\delta>0$ and an embedding $\iota: B_\delta  \hookrightarrow M$ such that the zero section gets mapped to $L$, and $\iota^\ast \omega = \omega_L$. By exactness, $\iota^\ast \theta = \alpha + dg$, some smooth function $g$. 
Any Floer or (universal) perturbation datum for $B_\delta$ can be extended to one on $M$:
\begin{itemize}
\item Extend all Hamiltonian perturbations by zero.
\item $j$ induces an $\omega$-compatible almost complex structure on $\iota(B_\delta)$. As we are only using structures on $B_\delta$ that agree with $j$ on a neighbourhood of $\partial B_\delta$, it is enough to extend this. First extend $j$ to an $\omega_L$-compatible almost complex structure on $B_{2\delta}$. This gives an $\omega$-compatible almost complex structure on $\iota(B_{2\delta})$. Using the fact that the space of $\omega$-compatible almost complex structures on $\iota(B_{2\delta}\backslash B_{\delta})$ is contractible, we can find $j'$, an $\omega$-compatible almost complex structure on $M$, such that $j'$ restricts to $j$ on a neighbourhood of $\iota(B_\delta)$, and $j' = J$ outside $\iota(B_{2\delta})$.
\end{itemize}
Make the same choice of universal strip-like ends as for $B_\delta$. 

\begin{lem}{(Abouzaid, see e.g.~\cite{AbouzaidSeidel} or \cite[Lemma 7.5]{Seidel08})}\label{th:Abouzaid}
   Suppose $(N, \omega_N, \theta_N)$ is an arbitrary exact symplectic manifold, $(U, \omega_U, \theta_U)$ an exact symplectic manifold of the same dimension with contact type boundary, and $\iota: U \hookrightarrow$ int($N$) an embedding such that $\iota^\ast \omega_N = \omega_U$, $ \iota^\ast \theta_N = \theta_U + dg$. Suppose we also have an $\omega_N$-compatible almost complex structure $\mathbb{J}$, whose restriction to $U$ is of contact type near the boundary.

Let $S$ be a compact connected Riemann surface with boundary, and $u: S \to N$ a $\mathbb{J}$-holomorphic map such that $u(\partial S) \subset$ int($U$). Then $u(S) \subset$ int($U$) as well.
Moreover, this only requires $u$ to be $\mathbb{J}$-holomorphic in a neighbourhood of $\partial U$. (In particular, there could be a compactly supported perturbation on the interior of $U$.)

\end{lem}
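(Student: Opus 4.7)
The lemma is a version of the integrated maximum principle for $\mathbb{J}$-holomorphic curves meeting a contact-type hypersurface. The plan is to install coordinates near $\iota(\partial U)$ in which the radial function is plurisubharmonic, and then to derive a contradiction from the strong maximum principle under the assumption that $u$ escapes $\iota(U)$.

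First I would use the Liouville vector field $Z_U$ of $(U, \omega_U, \theta_U)$, defined by $\iota_{Z_U}\omega_U = \theta_U$ and transverse/outward-pointing on $\partial U$ by the contact-type hypothesis, to trivialise a collar of $\partial U$ inside $U$ as $(1-\epsilon,1]\times \partial U$ with $\theta_U = r\alpha$ and $\omega_U = d(r\alpha)$, where $\alpha := \theta_U|_{\partial U}$ and $r$ is the collar coordinate (so $r=1$ on $\partial U$). Pushed forward through $\iota$ this yields a collar $\mathcal{V}$ of $\iota(\partial U)$ in $N$, on which $\omega_N = d(r\alpha)$ and $\iota^*\theta_N = r\alpha + dg$. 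The assumption that $\mathbb{J}$ is of contact type on $\mathcal{V}$ is exactly the statement that $r$ is plurisubharmonic with respect to $\mathbb{J}$: a direct computation (standard for symplectisations, and underlying e.g.\ \cite[Section 7]{Seidel08}) shows that $-d(dr\circ \mathbb{J})$ is a positive $(1,1)$-form taming $\mathbb{J}$, so that for any $\mathbb{J}$-holomorphic $v:\Sigma \to \mathcal{V}$ the function $r\circ v$ is subharmonic with respect to any compatible conformal metric on $\Sigma$.

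Assume now for contradiction that $u(S) \not\subset \iota(\mathrm{int}(U))$. Since $u(\partial S)\subset \iota(\mathrm{int}(U))$ is compact and disjoint from $\iota(\partial U)$, I can pick $c\in (1-\epsilon,1)$ so that $u(\partial S)$ lies in $\iota(U)\setminus \iota(\{r\geq c\}\times \partial U)$. Let $C$ be a connected component of the open subset
\[
u^{-1}\bigl(\iota(\{r>c\}\times \partial U)\bigr)\;\cup\; u^{-1}\bigl(N\setminus \iota(\overline{U})\bigr) \;\subset\; S
\]
that meets $u^{-1}(\iota(\partial U))$, which exists because $u$ must cross $\iota(\partial U)$ to escape. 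By construction $C$ is compactly contained in $\mathrm{int}(S)$, its topological boundary lies in $u^{-1}(\iota(\{r=c\}))$, and $r\circ u=c$ along that boundary. On each connected component of the open subset $C\cap u^{-1}(\mathcal{V})$, $r\circ u$ is continuous and subharmonic; on the interior boundary pieces where $u$ meets $\iota(\partial U)$ we have $r\circ u = 1$, realised as a maximum approached from within $\mathcal{V}$. The strong maximum principle (applied componentwise, using the Hopf boundary lemma at these interior contact points) then forces $r\circ u \equiv 1$ on such a component, contradicting $r\circ u = c<1$ along the arcs of $\partial C$ contained in it.

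The step requiring the most care is precisely this componentwise application of the maximum principle, since the topology of $C\cap u^{-1}(\mathcal{V})$ can be complicated: some boundary pieces lie on $\partial C$ (where $r\circ u=c$) and others on the locus $u^{-1}(\iota(\partial U))$ (where $r\circ u=1$ is attained from the $\mathcal{V}$-side). The ``moreover'' clause is then automatic, because the entire argument uses only the behaviour of $u$ on $u^{-1}(\mathcal{V})$, a neighbourhood of $\partial U$; so it suffices to require $u$ to be $\mathbb{J}$-holomorphic there.
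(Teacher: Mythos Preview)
The paper does not actually prove this lemma; it is quoted from the literature (\cite{AbouzaidSeidel}, \cite[Lemma~7.5]{Seidel08}). The argument in those references is the \emph{integrated} maximum principle --- an energy computation via Stokes --- which is what you name in your first sentence but not what you then carry out.

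Your collar construction and the identification of $r$ as $\mathbb{J}$-plurisubharmonic are correct. The gap is in the endgame. On a component $D$ of $C\cap u^{-1}(\mathcal{V})$, the subharmonic function $r\circ u$ attains its supremum $1$ on the ``type~(b)'' boundary $D\cap u^{-1}(\iota(\partial U))$. That is perfectly compatible with the strong maximum principle: the maximum sits on $\partial D$, not in the interior, so nothing forces $r\circ u\equiv 1$. Hopf's lemma only tells you that the outward normal derivative of $r\circ u$ is strictly positive at such a point; but ``outward'' from $D$ means into $C\setminus u^{-1}(\mathcal{V})$, where $u$ has already left $\iota(U)$ and $r$ is undefined. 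There is no contradiction: $u$ may simply cross $\iota(\partial U)$ transversally. Since the hypothesis only controls $\mathbb{J}$ on the \emph{inner} collar, you cannot extend the plurisubharmonicity of $r$ past $r=1$ to convert these into interior maxima.

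The repair is the Stokes argument. Pick $c\in(1-\epsilon,1)$ regular for $r\circ u$ with $u(\partial S)\subset\iota(\{r<c\})$, and set $\Sigma'=S\setminus u^{-1}\bigl(\iota(\{r<c\})\bigr)$, a compact subsurface with $\partial\Sigma'\subset u^{-1}(\{r=c\})$ and $\partial\Sigma'\cap\partial S=\emptyset$. Since $N$ is exact,
\[
0\;\le\;\int_{\Sigma'}u^*\omega_N
\;=\;\int_{\partial\Sigma'}u^*\theta_N
\;=\;\int_{\partial\Sigma'}u^*(r\alpha+dg)
\;=\;c\int_{\partial\Sigma'}u^*\alpha ,
\]
the $dg$-term vanishing because $\partial\Sigma'$ is closed. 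The contact-type condition $\alpha=-dr\circ\mathbb{J}$ gives $u^*\alpha(\tau)=d(r\circ u)(\nu)\le 0$ along $\partial\Sigma'$ (with $\nu$ the outward normal to $\Sigma'$), so the last integral is $\le 0$. Hence the energy is zero, $u$ is constant on each component of $\Sigma'$, and those constants lie in $\{r=c\}\subset\iota(\mathrm{int}\,U)$ --- contradicting the assumption that $u$ escapes. This uses holomorphicity of $u$ only on $\Sigma'$, i.e.\ on the collar and outside $\iota(U)$, which is exactly what the ``moreover'' clause (perturbation compactly supported in $\mathrm{int}(U)$) guarantees.
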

This lemma implies that no pseudo-holomorphic disc corresponding to the extended data can leave $\iota(B_\delta)$.
Thus regularity of the data for $B_\delta$ implies regularity of its extension. 
The observation at the end of section \ref{sec:Fukayacategory} implies that this extended data can taken as part of consistent universal choices made to define $\Fuk(M)$. Suppose you have set-up the auxiliary data for $\Fuk(M)$ in this way. Using lemma \ref{th:Abouzaid}, we see that
\bq 
CF_{B_\delta}(L,L) = CF_M (L, L)
\eq
as $A_\infty$ algebras, `on the nose'. Proposition \ref{th:lagspherestructure} now follows from lemma \ref{th:zerosectioncotangent}.

%---------- isomorphism in the Fukaya category: some criteria

\subsection{Isomorphism in the Fukaya category: some criteria}\label{sec:isomorphismcriteria}

$\Fuk(M)$ is c-unital, so there is a meaningful notion of quasi-isomorphism between its objects: two Lagrangians $L_0$, $L_1$ are quasi-isomorphic in $\Fuk(M)$ if there are $\mu^1$-closed morphisms $a \in CF(L_0,L_1)$, $b \in CF(L_1, L_0)$ such that
\bq [\mu^2(b,a)] = 1_{L_0} \in HF(L_0,L_0) \quad \text{and} \quad [\mu^2(a,b)] = 1_{L_1} \in HF(L_1,L_1) 
\eq
We will call two such objects ``Fukaya isomorphic" or ``quasi-isomorphic in the Fukaya category". When some of the objects are spheres, seemingly weaker conditions turn out to be equivalent to this one.

\begin{lem}\label{th:isojustneedonedirection}
Let $L_0$ be a Lagrangian sphere, and $L_1$ any Lagrangian. Suppose that we have $a \in CF(L_0,L_1)$, $b \in CF(L_1, L_0)$ such that 
\bq
[\mu^2(a,b)] = 1_{L_1} \in HF(L_1,L_1)
\eq
Then $L_0$ and $L_1$ are Fukaya isomorphic objects.
\end{lem}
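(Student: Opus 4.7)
The plan is to show that the same pair $(a,b)$ that is given already yields a two-sided quasi-isomorphism, by exploiting the very restrictive ring structure on $HF(L_0,L_0)$ coming from Proposition \ref{th:lagspherestructure}. Work throughout in the cohomology category $H(\Fuk(M))$, where the induced product from $\mu^2$ is strictly associative and unital; I will write $[a]\cdot[b] := [\mu^2(a,b)]$.

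First, set $c := [\mu^2(b,a)] \in HF(L_0,L_0)$. Using associativity of the product in $H(\Fuk(M))$ and the hypothesis $[a]\cdot[b] = 1_{L_1}$, I would compute
\begin{equation}
c^2 \;=\; [b]\cdot[a]\cdot[b]\cdot[a] \;=\; [b]\cdot\bigl([a]\cdot[b]\bigr)\cdot[a] \;=\; [b]\cdot 1_{L_1}\cdot[a] \;=\; [b]\cdot[a] \;=\; c.
\end{equation}
So $c$ is an idempotent in the ring $HF(L_0,L_0)$.

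The next step invokes Proposition \ref{th:lagspherestructure}: since $L_0$ is a Lagrangian sphere (of dimension $\geq 2$), $HF(L_0,L_0)$ is isomorphic, as a ring, to $A=\Z_2[\e]/(\e^2)$. A direct check shows the only idempotents in $A$ are $0$ and $1$: writing a general element as $x+y\e$ with $x,y\in\Z_2$, the condition $(x+y\e)^2 = x+y\e$ forces $x^2=x$ and $y=0$. So $c\in\{0,1_{L_0}\}$.

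Finally, I would rule out $c=0$ by a standard one-sided-inverse argument. Assuming $c=0$, strict associativity gives
\begin{equation}
[a] \;=\; 1_{L_1}\cdot[a] \;=\; \bigl([a]\cdot[b]\bigr)\cdot[a] \;=\; [a]\cdot\bigl([b]\cdot[a]\bigr) \;=\; [a]\cdot c \;=\; 0,
\end{equation}
whence $[a]\cdot[b]=0$, contradicting $[a]\cdot[b] = 1_{L_1}\neq 0$ (the unit in the c-unital ring $HF(L_1,L_1)$ is nonzero since $\Fuk(M)$ is c-unital and $L_1$ is a nonzero object). Therefore $c=1_{L_0}$, and the pair $(a,b)$ itself exhibits $L_0$ and $L_1$ as quasi-isomorphic in $\Fuk(M)$.

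The only real input is Proposition \ref{th:lagspherestructure} (via the classification of idempotents in $A$); everything else is formal manipulation in the cohomology category. The sole subtle point is the nonvanishing of $1_{L_1}$, which is immediate from c-unitality.
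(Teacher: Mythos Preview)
Your proof is correct and essentially the same as the paper's: both compute that $c=[\mu^2(b,a)]$ is a nonzero idempotent in $HF(L_0,L_0)$, then invoke the ring structure $HF(L_0,L_0)\cong\Z_2[\e]/(\e^2)$ (from Proposition \ref{th:lagspherestructure}) to conclude $c=1_{L_0}$. The only cosmetic differences are that you spell out the idempotent classification explicitly and phrase the nonvanishing of $c$ as a proof by contradiction rather than via the identity $a\cdot b\cdot a=a$.
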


\begin{proof}
$(b \cdot a )^2 = b \cdot a$, so $b \cdot a$ is an idempotent element of $HF(L_0, L_0)$. Moreover, as $a\cdot  b \cdot a = a$,$ b \cdot a$ is non-zero. As we know the ring structure of $HF(L_0, L_0)$, we can check that the only other idempotent is $1_{L_0}$ itself, which means that $b \cdot a = 1_{L_0}$.
\end{proof}

%-- Sujectivity

\begin{cor}\label{th:surjectivegiveiso}
Suppose that $L_0$ and $L_1$ are Lagrangian spheres, and that multiplication 
\bq
HF(L_0,L_1) \otimes HF(L_1, L_0) \to HF(L_1, L_1)
\eq
is surjective. Then $L_0$ and $L_1$ are Fukaya isomorphic. 
\end{cor}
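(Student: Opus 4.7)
The strategy is to reduce to Lemma \ref{th:isojustneedonedirection}: since $L_0$ is a sphere, it suffices to exhibit cocycles $a \in CF(L_0,L_1)$ and $b \in CF(L_1,L_0)$ satisfying $[\mu^2(a,b)] = 1_{L_1}$.

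The first step is to upgrade surjectivity into the existence of a single elementary product which is a unit in $HF(L_1,L_1)$. By Proposition \ref{th:lagspherestructure}, $HF(L_1,L_1) \cong A$ is a local $\Z_2$-algebra whose maximal ideal $(\e)$ is a proper subspace. The image of the multiplication map is exactly the $\Z_2$-span of the elementary products $\{[\mu^2(a,b)] : [a] \in HF(L_0,L_1),\ [b] \in HF(L_1,L_0)\}$; if each of these lay in $(\e)$, so would the span, contradicting surjectivity onto $HF(L_1,L_1)$. Hence there exist cocycles $a,b$ with $[\mu^2(a,b)]$ equal to one of the two units, either $1_{L_1}$ or $1_{L_1}+\e$.

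The second step is to square. In characteristic two, $(1_{L_1}+\e)^2 = 1_{L_1}+\e^2 = 1_{L_1}$, and of course $1_{L_1}^2 = 1_{L_1}$, so in either case $([a]\cdot[b])^2 = 1_{L_1}$ in $HF(L_1,L_1)$. Using associativity of the product on $H(\Fuk(M))$, I would regroup this as $[a] \cdot \bigl([b]\cdot[a]\cdot[b]\bigr) = 1_{L_1}$, where $[b]\cdot[a]\cdot[b] \in HF(L_1,L_0)$. Choosing any cocycle $b' \in CF(L_1,L_0)$ representing this class produces $[\mu^2(a,b')] = 1_{L_1}$, and Lemma \ref{th:isojustneedonedirection} then completes the argument.

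If I had to single out a main obstacle it would be the first step: linear surjectivity does not by itself hand over an elementary tensor with a prescribed image, and one needs the locality of $HF(L_1,L_1)$---specifically that $(\e)$ is a proper $\Z_2$-subspace---to force one elementary product outside the maximal ideal. Everything after that is formal, and essentially amounts to the observation that in $\Z_2[\e]/(\e^2)$ every unit is its own inverse.
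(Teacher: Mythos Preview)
Your proof is correct and follows essentially the same route as the paper: both argue that some elementary product $[a]\cdot[b]$ must be a unit in $HF(L_1,L_1)\cong \Z_2[\e]/(\e^2)$ (since the proper subspace $(\e)$ cannot contain the full image), and then adjust to land exactly on $1_{L_1}$ before invoking Lemma~\ref{th:isojustneedonedirection}. The only cosmetic difference is in that last adjustment: the paper right-multiplies $[b]$ by the inverse of $[a]\cdot[b]$, whereas you observe that every unit in $\Z_2[\e]/(\e^2)$ squares to $1$ and regroup $([a]\cdot[b])^2$ --- but these are the same idea.
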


\begin{proof}
It is enough to find elements $a \in HF(L_0,L_1)$ and $b \in HF(L_0,L_1)$ such that $a \cdot b = 1_{L_1} \in HF (L_1, L_1)$.
Consider the PSS ring isomorphism $HF(L_1, L_1) \cong H^\ast (L_1; \Z_2)$. We know that the image of multiplication $HF(L_0, L_1) \otimes HF(L_1, L_0) \to H^\ast (L_1; \Z_2)$ cannot be contained in $H^n(L_1)$. As all elements in $H^\ast(L_1; \Z_2) \backslash H^n(L_1; \Z_2)$ are invertible, there are $a \in HF(L_0, L_1)$ and $c \in HF(L_1, L_0)$ such that $a \cdot c$ is invertible in $HF(L_1,L_1)$. Let $d \in HF(L_1,L_1)$ be the inverse. Notice that by constrution, we have $a \cdot (c \cdot d) = 1_{L_1}$.
\end{proof}

%-- Duality, 'cyclicity', multiplying with \e non zero to give iso.

\begin{prop}\label{th:multiplybye}
Suppose that $L_0$ is a Lagrangian sphere, and that the map
\begin{align}
HF (L_0, L_1) & \to HF (L_0, L_1)  \\
a & \mapsto a \cdot \e
\end{align}
is non-zero. Then there exists $d \in HF(L_1, L_0)$ such that $d \cdot a = 1_{L_0}$. 
\end{prop}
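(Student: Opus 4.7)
The plan is to find $a \in HF(L_0, L_1)$ and $d \in HF(L_1, L_0)$ with $d \cdot a = 1_{L_0}$, using a non-degenerate trace pairing between $HF(L_0, L_1)$ and $HF(L_1, L_0)$. Fix an $a$ with $a \cdot \e \ne 0$, which exists by hypothesis. The key ingredient is the Poincar\'e duality pairing $\langle \alpha, \beta \rangle := \mathrm{tr}\bigl(\mu^2(\beta, \alpha)\bigr)$ on $HF(L_0, L_1) \otimes HF(L_1, L_0)$, where $\mathrm{tr}: HF(L_0, L_0) \cong \Z_2[\e]/(\e^2) \to \Z_2$ extracts the coefficient of $\e$ (equivalently, via PSS, integration against the fundamental class of the sphere $L_0$). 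Non-degeneracy of this pairing follows by combining the chain-level vector space duality from the dual Floer datum construction (Section \ref{sec:Floercohomology}) with Proposition \ref{th:lagspherestructure}.

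Applied to the nonzero element $a \cdot \e$, non-degeneracy produces $b \in HF(L_1, L_0)$ with $\mathrm{tr}\bigl(b \cdot (a \cdot \e)\bigr) = 1$. By associativity this equals $\mathrm{tr}\bigl((b \cdot a) \cdot \e\bigr)$. Writing $b \cdot a = \alpha \cdot 1_{L_0} + \beta \cdot \e$ in $HF(L_0, L_0)$, one computes $(b \cdot a) \cdot \e = \alpha \e$ since $\e^2 = 0$, so $\alpha = 1$. Hence $b \cdot a \in \{1_{L_0},\, 1_{L_0} + \e\}$, both units of $HF(L_0, L_0)$: in characteristic two, $(1_{L_0} + \e)^2 = 1_{L_0}$. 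Setting $u := (b \cdot a)^{-1}$ and $d := u \cdot b \in HF(L_1, L_0)$, associativity yields $d \cdot a = u \cdot (b \cdot a) = 1_{L_0}$, as required.

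The main obstacle is verifying the Poincar\'e duality pairing in a form compatible with the $A_\infty$ structure: the duality described in Section \ref{sec:Floercohomology} is purely at the chain / vector-space level, so one must match it to the cup-product trace by comparing the moduli of pseudo-holomorphic triangles computing the triple product (with boundary labels cycling $L_0, L_1, L_0$) to the moduli of strips underlying the dual pairing, and invoking cyclic symmetry. Once this compatibility is in hand, the remaining argument is the short algebraic manipulation above, relying only on the fact that the units of $\Z_2[\e]/(\e^2)$ are $1$ and $1 + \e$ and that both square to $1$.
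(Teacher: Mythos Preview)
Your proposal is correct and follows essentially the strategy the paper itself sketches in the Remark immediately after the proof: invoke the Frobenius/Poincar\'e-duality pairing $\mathrm{tr}\,\mu^2(\cdot,\cdot)$, use it to find $b$ with $(b\cdot a)\cdot\e\neq 0$, and then exploit the ring structure of $HF(L_0,L_0)\cong\Z_2[\e]/(\e^2)$ to force $b\cdot a$ to be a unit. Your final algebraic step (multiplying $b$ by $(b\cdot a)^{-1}$) is exactly the paper's case split $d\cdot a=1$ versus $d\cdot a=1+\e$, repackaged.

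Where the two diverge is in what is proved versus cited. The paper's main proof is a hands-on verification of precisely the point you flag as the ``main obstacle'': it chooses the dual Floer datum for $(L_1,L_0)$ and the datum for $(L_0,L_0)$ dual to the original one, then rotates the three-punctured disc by $2\pi/3$ to put the moduli spaces computing $\mu^2(a,x_m)$ in bijection with those computing $\mu^2(d,a)$, thereby identifying the chain-level duality pairing of Section~\ref{sec:Floercohomology} with the $\e$-coefficient of the product. So your approach packages this geometric content as ``the Frobenius property'' and defers its proof, while the paper carries out that verification directly and only records the Frobenius interpretation afterwards. Both arrive at the same place; the paper's route is self-contained in the sense that it does not rely on the Frobenius property as a black box, whereas yours is shorter and more conceptual once that property is granted.
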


\begin{proof}
Fix regular Floer data for the pairs $(L_0, L_1)$ and $(L_0, L_0)$, say $(H_{L_0, L_1}, J_{L_0,L_1})$ and $(H_{L_0,L_0}, J_{L_0, L_0})$.
For convenience, assume $H_{L_0, L_0}$ is constructed from a Morse function on $L_0$, as described in section \ref{sec:spherestructure}. 
By hypothesis, there must be cocycles $a \in CF(L_0, L_1)$ and $c \in CF(L_0, L_1)$, non zero in cohomology, such that $\mu^2 (a, x_m) = c$. 
We can assume that $a$ corresponds to a unique point of $L_0 \pitchfork \phi^1(L_1)$, where $\phi^1$ is the time-one flow associated to $H_{L_0, L_1}$.

Let $e_i$ be generators of $CF_{\tilde{\F}}(L_0, L_1)$, each corresponding to a point of $L_0 \pitchfork \phi^1(L_1)$.
Up to holomorphic reparametrization, there is a unique closed disc $\D$ in $\C$ with three boundary points removed. W.l.o.g. it is the unit disc with the roots of unity removed. Label these anti-clockwise as $\zeta_0$ (incoming), $\zeta_1$ and $\zeta_2$ (both outgoing). 
Suppose you have chosen the auxiliary data to define a Fukaya category $\F$ of $M$, using $(H_{L_0, L_1}, J_{L_0,L_1})$ and $(H_{L_0,L_0}, J_{L_0, L_0})$. $\D$ has been equipped with strip-like ends, and a perturbation datum for each choice of Lagrangian labels. The coefficient of $e_i$ in the product $\mu^2(a, x_m)$ is computed by counting certain solutions $u \in C^\infty(\D, M)$ to the generalized Floer equation corresponding to the perturbation datum, and boundary and asymptotic conditions determined by figure \ref{fig:holodisc1}. 

\begin{figure}[htb]
\begin{center}
\includegraphics[scale=0.85]{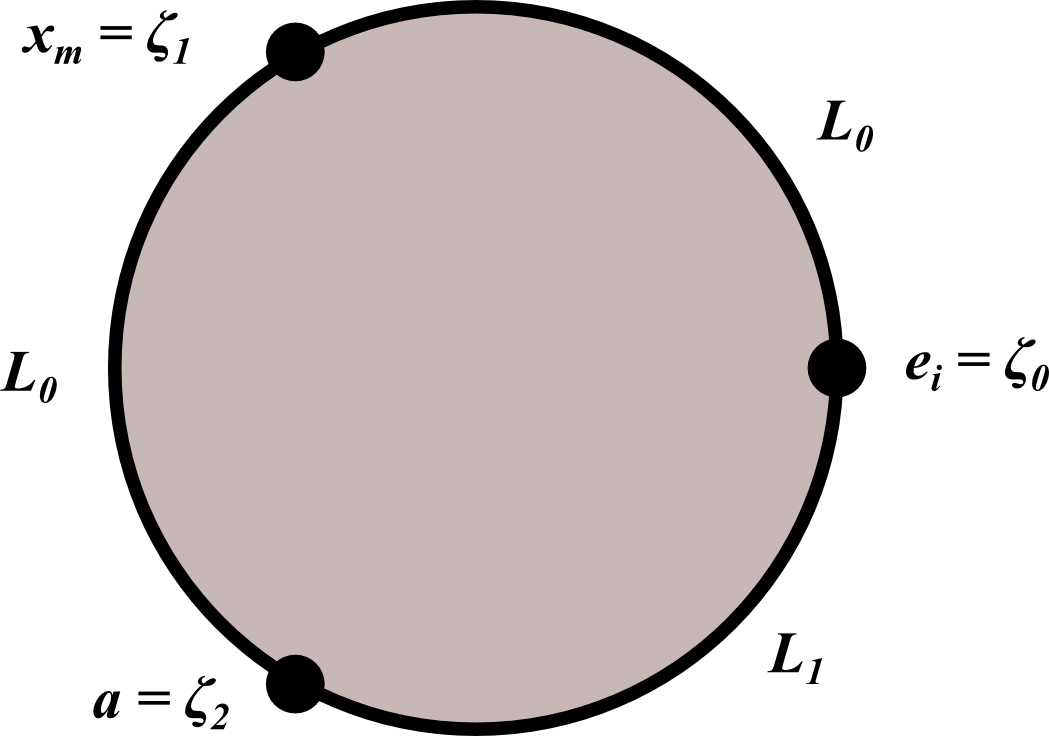}
\caption{Lagrangian labels and asymptotic conditions for marked points of $D$.}
\label{fig:holodisc1}
\end{center}
\end{figure}

We now want to study the product $HF(L_1, L_0) \otimes HF(L_0, L_1) \to HF(L_0, L_0)$. Define this at the chain level by making the following choices: use the same Floer datum for $(L_0, L_1)$ as before, the dual datum for $(L_1, L_0)$, and, for $(L_0, L_0)$, the datum \emph{dual} to the original datum $(H_{L_0, L_0}, J_{L_0, L_0})$. 
 With our new choice of datum, the two generators for $CF(L_0, L_0)$ are still $x_M$ and $x_m$, but now $x_M$ corresponds to $\e$ (for it is the minimum of $-h$), and $x_m$ to $1_{L_0}$.
 
Let $\D'$ be another copy of the closed unit disc in $\C$ with the third roots of unity removed, labelled anticlockwise by $\zeta_0'$, $\zeta_1'$ and $\zeta_2'$. As with $\D$, $\zeta_0'$ is considered an incoming point, and $\zeta_1'$ and $\zeta_2'$ outgoing ones. Equip $\D'$ with the Lagrangian labels as in figure \ref{fig:holodisc2}.

\begin{figure}[htb]
\begin{center}
\includegraphics[scale=0.85]{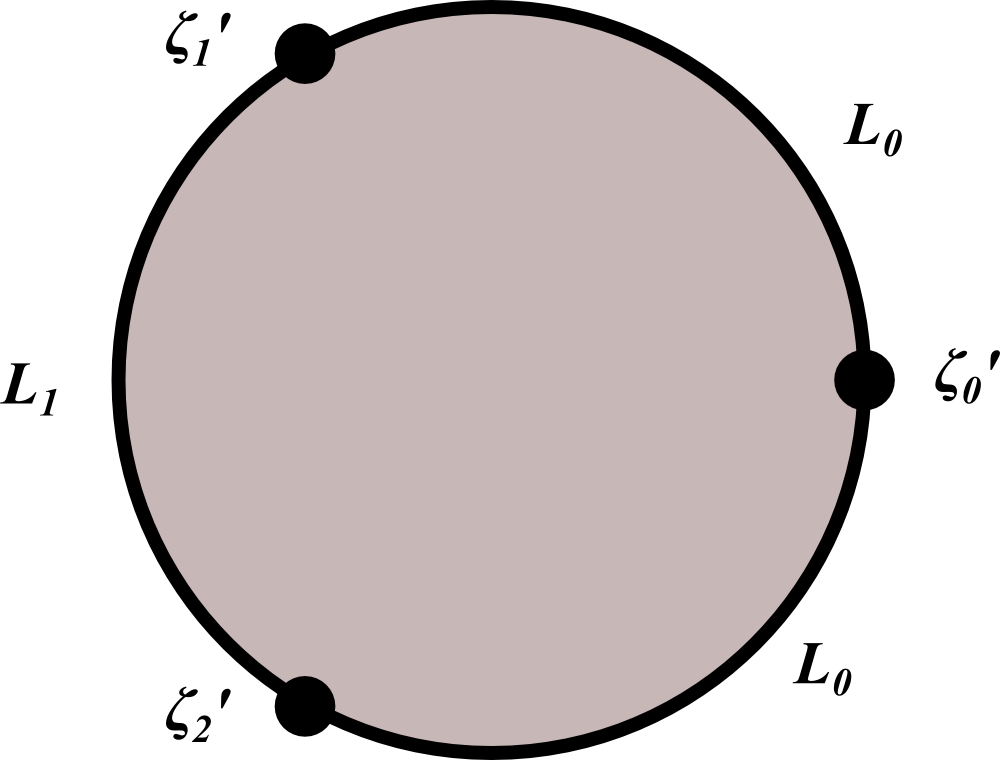}
\caption{Lagrangian labels for $D'$.}
\label{fig:holodisc2}
\end{center}
\end{figure}

Rotation by $2\pi /3$ gives a map $\D' \to \D$ such that $\zeta'_i \mapsto \zeta_{i+1}$, with indices taken mod 3.
Post- and pre-composing with rotation give strip-like ends and a perturbation datum for $\D'$, using those for $\D$ (when equipped with Lagrangian labels as in figure \ref{fig:holodisc1}). We have chosen our Floer data precisely for these to be compatible with it.
Let $\tilde{\F}$ be a Fukaya category defined using these and additional admissible auxiliary data.

 Denote by $e_i^\text{v}$ the generator of  $CF_{\tilde{\F}}(L_1, L_0)$ dual to $e_i$.
The `obvious' linear pairing defined by
\begin{eqnarray}
CF_{\tilde{\F}}(L_1, L_0)  \otimes  CF_{\tilde{\F}}(L_0, L_1) & \to & \Z_2 \\
e_i^\text{v}  \otimes  e_i \qquad \quad \, \,\, \,   \quad  & \to & 1
\end{eqnarray}
descends to a non-degenerate pairing on cohomology, say $(\cdot, \cdot)$. 
Say $c = c_1 + \ldots + c_k$, where each $c_i$ corresponds to one point of $L_0 \pitchfork \phi^1(L_1)$.  Let $c_i^\text{v}$ be the generator of $CF_{\tilde{\F}}(L_1, L_0)$ dual to $c_i$. Pick a class $d \in HF(L_1, L_0)$ such that $(d, c) = 1$. Note that in general, $d$ need not be unique. Fix a chain level representative for $d$, which we also denote by $d$. It must be of the form
\bq
d = c_{k_1}^\text{v} + \ldots + c_{k_l}^\text{v} + f_1^\text{v} + \ldots + f_j^\text{v}
\eq
where $\{ k_1, \ldots , k_l  \} \subset \{ 1 , \ldots , k \}$ and $l$ is \emph{odd}, and each $f_i$ is a chain element corresponding to a point of $L_0 \pitchfork \phi^1(L_1)$ disjoint from the $c_i$, with $f_i^\text{v}$ denoting its dual.

 We want to compute $\mu^2(d,a)$. By construction, the maps $u \in C^\infty (\D, M)$ counted in the products $CF_\F(L_0, L_1) \otimes CF_\F(L_0, L_0) \to CF_\F (L_0, L_1)$ are in one-to-one correspondance with the maps $v \in C^\infty (\D, M)$ counted in the products $CF_{\tilde{\F}}(L_1, L_0) \otimes CF_{\tilde{\F}}(L_0, L_1) \to C_{\tilde{\F}}(L_0, L_0)$. Suppose the asymptotics conditions for $u$ are those of figure \ref{fig:holodisc1}. Then $v$ will satisfy asymptotic conditions as follows:
\begin{itemize}
\item $\zeta'_0$, an incoming point, corresponds to $x_m$, which has cohomology class $1_{L_0}$.
\item $\zeta'_1$, which is an outgoing point, as is $\zeta_2$, corresponds to $a$.
\item $\zeta'_2$ is an outgoing point, whereas $\zeta_0$ is an incoming one; this means that $\zeta'_2$ corresponds to $e_i^\text{v}$. 
\end{itemize}
The mod 2 count of these discs is one precisely when $e_i$ is one of the $c_i$. Thus, as $l$ is odd, the coefficient of $1_{L_0}$ in  $\mu^2(d, a)$ is one.
 This means that either $d \cdot a = 1$ or $d \cdot a = 1+\e$. If the former holds, we are done; if the later does, simply note that $(d + \e \cdot d) \cdot a = 1+ \e + \e \cdot (1+\e) = 1$.  This completes the proof of  Proposition \ref{th:multiplybye}.
\end{proof}

%remark about Frobenius property
\begin{remark} Although we took a different approach for technical reasons, the informed reader might want to think of the above as a formal consequence of the Frobenius property 
of the Fukaya category (see e.g. \cite[Section 8c]{Seidel08} for statements), together with the ring structure of $HF(L_0, L_0)$. Here is an outline:

For any Lagrangian $L$, we have a linear map
\bq
\int_{L}: \,HF(L,L) \to \Z_2
\eq
which under the PSS isomorphism corresponds to integration over the fundamental class. Moreover, for any two Lagrangians $L_0$ and $L_1$, the pairing
\bq
\xymatrix{
HL(L_1, L_0) \otimes HF (L_0, L_1) \ar[r]^-{\mu^2} &  HF (L_0, L_0) \ar[r]^-{\int_{L_0}} & \Z_2
}
\eq
is non-degenerate (Frobenius property). One could check that this is the same pairing as the one we used in the proof. In this case, knowing that it factors through the Floer product readily provides an element $d \in HF(L_1, L_0)$ such that $d \cdot c \neq 0 \in HF(L_0, L_0)$. By associativity of Floer products, we have
\bq
(d \cdot a) \cdot \e \neq 0 \in HF(L_0, L_0).
\eq 
$L_0$ is a sphere; in particular, we completely understand the ring structure of $HF(L_0,L_0)$. Considering all possibilities, we see that $d \cdot a = 1$ or $d \cdot a = 1+\e$. Now conclude as before.
\end{remark}

%end of remark about Frobenius property

\begin{cor}\label{th:multiplybyenontrivialimage}
Suppose $L_0$, $L_1$ are Lagrangians, with $L_0$ a sphere, and
\begin{align}
HF (L_0, L_1) & \to HF (L_0, L_1)  \\
[a] & \mapsto [\mu_2(a,\e)]
\end{align}
has non-trivial image. Then $L_0$ and $L_1$ are Fukaya isomorphic.
\end{cor}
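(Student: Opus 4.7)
The plan is to derive the Corollary as a direct consequence of Proposition \ref{th:multiplybye}, supplemented by a short idempotent argument inside $HF(L_1, L_1)$.

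First I would unpack the hypothesis: saying that $[a] \mapsto [\mu^2(a, \e)]$ has non-trivial image is precisely saying that the right-multiplication-by-$\e$ endomorphism of $HF(L_0, L_1)$ is non-zero, which is the hypothesis of Proposition \ref{th:multiplybye}. That proposition therefore furnishes a class $a \in HF(L_0, L_1)$ (with $a \cdot \e \neq 0$) and a class $d \in HF(L_1, L_0)$ such that $d \cdot a = 1_{L_0}$ in $HF(L_0, L_0)$. Choosing any $\mu^1$-closed representatives, this already supplies one of the two identities demanded by the definition of Fukaya isomorphism, namely $[\mu^2(d,a)] = 1_{L_0}$.

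It remains to show that the companion identity $a \cdot d = 1_{L_1}$ also holds in $HF(L_1, L_1)$. Set $e := a \cdot d$. Associativity of the Floer product and the already-established $d \cdot a = 1_{L_0}$ give
\[
e^2 = a \cdot (d \cdot a) \cdot d = a \cdot 1_{L_0} \cdot d = a \cdot d = e,
\]
while $e \cdot a = a \cdot (d \cdot a) = a \neq 0$ rules out $e = 0$. So $e$ is a non-zero idempotent of $HF(L_1, L_1)$. Under the PSS ring isomorphism, $HF(L_1, L_1) \cong H^{\ast}(L_1; \Z_2)$; for a connected compact $L_1$, the positive-degree part of this ring is a nilpotent ideal (since $H^k(L_1;\Z_2) = 0$ for $k > \dim L_1$, so cup products of total degree exceeding $\dim L_1$ vanish), while $H^0(L_1;\Z_2) = \Z_2$. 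A short elementary computation with the decomposition $e = e_0 + e_{>0}$ shows, in characteristic $2$, that $e^2 = e$ forces $e_{>0}^2 = e_{>0}$ in a nilpotent ideal, hence $e_{>0} = 0$, so the only non-zero idempotent is $1$. Therefore $e = 1_{L_1}$, and $(a, d)$ exhibits the Fukaya isomorphism.

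The main subtlety is pinning down the idempotents of $HF(L_1, L_1)$: this step relies on $L_1$ being connected, which is the tacit convention for objects of the Fukaya category throughout the paper and is certainly satisfied in every subsequent application of the Corollary (where $L_1$ will itself be a Lagrangian sphere). Without connectedness, the idempotent $e$ would only cut out a single connected component of $L_1$, and one could conclude a Fukaya quasi-isomorphism of $L_0$ with that component rather than with $L_1$ in its entirety.
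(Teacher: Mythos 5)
Your proof is correct, and its first step coincides with the paper's: both invoke Proposition \ref{th:multiplybye} to produce $d \in HF(L_1,L_0)$ with $d \cdot a = 1_{L_0}$. The difference lies in how the companion identity $a\cdot d = 1_{L_1}$ is obtained. The paper simply cites Lemma \ref{th:isojustneedonedirection}; but note that, as stated, that lemma transfers the unit from the $L_1$-end to the sphere end (it assumes $[\mu^2(a,b)]=1_{L_1}$ and deduces $[\mu^2(b,a)]=1_{L_0}$), which is the opposite of the direction needed here, and its proof uses the known ring structure of the endomorphisms of the object at which the identity is being \emph{deduced}. When $L_1$ is itself a sphere --- as it is in every application of the corollary in the paper --- the lemma applies with the roles of $L_0$ and $L_1$ exchanged and the deduction really is immediate. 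For a general $L_1$, your idempotent computation in $HF(L_1,L_1)\cong H^\ast(L_1;\Z_2)$ supplies exactly the missing step: a non-zero idempotent there must be the unit provided $L_1$ is connected, since the positive-degree part of the cohomology ring is a nilpotent ideal. So your argument is a correct and slightly more careful rendering of the paper's; the connectedness caveat you flag is genuine but harmless, both because the PSS ring isomorphism holds for any closed exact Lagrangian and because every $L_1$ to which the corollary is later applied is a sphere.
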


\begin{proof}
This is immediate from Proposition \ref{th:multiplybye} and Lemma \ref{th:isojustneedonedirection}.
\end{proof}
%---------------------------------------------------------------------------------------
%---------------------------------------------------------------------------------------
% CLASSIFICATION OF FINITE DIMENSIONAL A-MODULES

\section{Classification of finite dimensional $A$-modules}\label{sec:classification}

Let us start by recalling some definitions \cite[Section 4.2]{Keller}.

\begin{dd}
Given an $A_{\infty}$ algebra $B$, a right $A_\infty$ module over $B$ is a $\Z_2$ vector space $M$ together with a sequence of maps
\bq
\mu_M^n: M \otimes B^{\otimes (n-1)} \to M
\eq
for all $n \geq 1$, such that
\bq
\sum \mu_M^{r+t+1}( Id^r \otimes \mu^s \otimes Id^t ) =0
\eq
where the sum is taken over all decompositions $r + s + t =n$, with $r, t \geq 0$ and $s > 0$, and $\mu^s$ denotes $\mu_B^s$ when $r>0$, and $\mu_M^s$ otherwise.

If $B$ is strictly unital, and $\mu^n_M$ vanishes whenever one of the entries from $B$ is its unit, we say that $M$ is strictly unital.
\end{dd}

\begin{dd}
A morphism of right $A_\infty$-modules $f: \,N\to M$ is a sequence of morphisms 
\bq
f_n : N \otimes A^{\otimes(n-1)} \to M
\eq 
for $n \geq 1$ such that for all $n$, we have
\begin{equation}\label{eq:relationformorphism}
\sum f_{r+1+t} (1^{\otimes r} \otimes \mu^s \otimes 1^{\otimes t}) = \sum \mu_{j+1}(f_l \otimes 1^{\otimes j}) 
\end{equation}
where the left hand sum is taken over all decompositions $n=r+s+t$, with $r,t\geq 0$, $s\geq 1$, and the right-hand sum is taken over all decompositions $n=j+l$, $j \geq 0$, with $l \geq 1$.
The morphism is said to be strictly unital if $f_n$ ($n \geq 2$) vanishes whenever one of the entries for $A$ is the unit 1. 
\end{dd}
The definitions for left modules are analogous.
\newline

Recall $A$ is the strictly unital  $A_\infty$ algebra $\mathbb{Z}_2 [\epsilon] / \epsilon^2$
with the following $A_\infty$ structure: $\mu^2(1,\epsilon)=\mu^2(\epsilon,1)=\epsilon$, and all other maps are zero.
We wish to classify strictly unital, finite dimensional $A_\infty$ modules over $A$. For such a right $A_\infty$ module, say $M$, the only (potentially) non-trivial action of $A$ is given by $\mu^1(\cdot)$, $\mu^2(\cdot,\epsilon)$, $\mu^3(\cdot, \epsilon, \epsilon)$, \ldots 
and similarly for left modules. For $k \geq 1$, define $R_k$ to be the right $A$-module with two generators (as a vector space), $r_k^0$ and $r_k^1$, such that $\mu^k(r_k^0,\epsilon,\ldots,\epsilon) = r_k^1$, and the $A_\infty$ action is trivial otherwise. Define $\Lk$, a left $A$-module with generators $l_k^0$ and $l_k^1$, similarly.

\begin{prop}\label{th:classificationofmodules}
Let $M$ be a strictly unital, finite dimensional right (resp. left) $A_\infty$ module over $A$. Then $M$ is quasi-isomorphic to a finite dimensional module $N$, that decomposes as a direct sum of $A_\infty$ modules of the following forms:
\begin{itemize}
\item copies of $\mathbb{Z}_2$, with the trivial $A_\infty$ action;
\item finitely many $R_k$'s (resp. $\Lk$'s).
\end{itemize}
\end{prop}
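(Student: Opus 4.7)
My plan is to induct on $\dim_{\mathbb{Z}_2} M$, after first reducing to the case where $M$ is minimal. I treat the right-module case; the left is symmetric.

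First, by the module version of the minimal model theorem (an analogue of \cite[Lemma 2.1]{Seidel08}), replace $M$ by a quasi-isomorphic strictly unital $A_\infty$-module $\bar M$ with $\mu^1_{\bar M} = 0$ and $\dim \bar M \leq \dim M$. Strict unitality reduces the non-trivial module data to the family $d^{(k)} := \mu^k_{\bar M}(-,\epsilon,\ldots,\epsilon)$, $k \geq 1$, with $d^{(1)} = 0$. Since $\mu^n_A = 0$ for $n \neq 2$ and $\mu^2_A(\epsilon,\epsilon) = 0$, every term of a module $A_\infty$-relation containing an internal $\mu^s_A$ kills an all-$\epsilon$ input, so the relations collapse to
\[ \sum_{s+t = n+1,\; s,t \geq 1} d^{(s)} d^{(t)} = 0 \qquad \text{for each } n \geq 1. \]
If every $d^{(k)}$ vanishes then $\bar M$ is a direct sum of trivial modules $\mathbb{Z}_2$ and we are done. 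Otherwise let $k \geq 2$ be the smallest index with $d^{(k)} \neq 0$; the relation at $n = 2k - 1$ specialises to $d^{(k)} \circ d^{(k)} = 0$, so $d^{(k)}$ is a differential.

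The key claim is that $\bar M \cong R_k \oplus M'$ as strictly unital $A_\infty$-modules with $\dim M' = \dim \bar M - 2$; combined with the inductive hypothesis, this completes the proof. To prove the claim, pick $v \in \bar M$ with $w := d^{(k)}(v) \neq 0$ and build a strictly unital $A_\infty$-morphism $\phi: R_k \to \bar M$ with $\tilde\phi^{(1)}(r_k^0) = v$, $\tilde\phi^{(1)}(r_k^1) = w$, together with an $A_\infty$-retraction $\psi: \bar M \to R_k$ satisfying $\psi \circ \phi = \mathrm{id}_{R_k}$. After the same all-$\epsilon$ collapse, the morphism relation at arity $n$ for $\phi$ reads $\tilde\phi^{(n+1-k)} d^{R_k,(k)} + \sum_{s=k}^{n} d^{M,(s)} \tilde\phi^{(n+1-s)} = 0$. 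I solve this recursively in $n$: set $\tilde\phi^{(j)}(r_k^0) = 0$ for all $j \geq 2$; the equation at arity $n \geq k+1$ evaluated on $r_k^0$ then forces $\tilde\phi^{(n+1-k)}(r_k^1) = d^{M,(n)}(v)$, and the equation on $r_k^1$ becomes $\sum_{a+b = n+k,\; a,b \geq k} d^{M,(a)} d^{M,(b)}(v) = 0$, which is precisely the $A_\infty$-relation on $\bar M$ at arity $n+k-1$ applied to $v$. The retraction $\psi$ is built by an analogous recursion, starting from any linear retraction onto $\langle v, w \rangle$ from a vector-space splitting $\bar M = \langle v, w \rangle \oplus M'$. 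With $\phi$ and $\psi$ in hand, $\phi \circ \psi$ is an $A_\infty$-idempotent on $\bar M$ and gives the direct sum decomposition $\bar M \cong R_k \oplus \ker\psi$ as strictly unital $A_\infty$-modules (over a field, a split $A_\infty$-idempotent always realises such a decomposition); induction on $M' := \ker\psi$ completes the proof.

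The main obstacle is the construction of the retraction $\psi$, whose recursion is less transparent than that of $\phi$ because the defining equations now involve $d^{M,(s)}$ acting across all of $\bar M$ rather than only on $v$ and $w$. The successive obstructions vanish because of the $A_\infty$-relations on $\bar M$ together with the requirement $\psi \circ \phi = \mathrm{id}$, but the bookkeeping is more involved; once it is carried out, the verification that $M' = \ker\psi$ is a strictly unital $A_\infty$-submodule is routine.
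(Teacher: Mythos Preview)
Your approach differs substantially from the paper's, and it has a real gap at precisely the step you flag as the main obstacle. The construction of $\phi$ is fine, but the retraction $\psi$ is only asserted. Two concrete problems. First, you cannot start from an \emph{arbitrary} linear retraction onto $\langle v,w\rangle$: already the arity-$k$ relation forces the $r_k^0$-component $\alpha_1$ of $\psi_1$ to satisfy $\alpha_1 = \beta_1\circ d^{(k)}$, hence to vanish on all of $\ker d^{(k)}$, not just on a chosen complement of $v$; so the recursion must begin with a constrained $\psi_1$, and you have not shown the constraints at successive arities are simultaneously solvable. Second, once $\psi$ has nonzero higher components, ``$\ker\psi$'' has no evident meaning as an $A_\infty$-submodule, and the claim that a split $A_\infty$-idempotent yields a strict direct-sum decomposition of modules is not standard. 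What you actually need is an $A_\infty$ isomorphism $R_k\oplus M'\to\bar M$ for some genuine submodule $M'$, and identifying such an $M'$ is essentially the whole problem.

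The paper avoids all of this by a single observation: set $d=\sum_{n\ge 0} t^{n}\,\mu^{n+1}(\,\cdot\,,\epsilon,\ldots,\epsilon)$ on $M[[t]]$; the $A_\infty$ relations say exactly $d^2=0$. Then $H(M[[t]],d)$ is a finitely generated module over the PID $\mathbb{Z}_2[[t]]$, the structure theorem writes it as $\mathbb{Z}_2[[t]]^{r}\oplus\bigoplus_i\mathbb{Z}_2[[t]]/(t^{k_i})$, one reads off $N$ from this decomposition, and a quasi-isomorphism $N\to M$ is built by lifting generators and checked via the five-lemma. Your inductive splitting, if completed, would amount to reproving a normal form for nilpotent matrices over $\mathbb{Z}_2[[t]]$ by hand; the paper gets it for free from the PID structure theorem.
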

We present a proof for right-modules; as $A$ is isomorphic to its opposite algebra, the case of left-modules follows. Let $t$ be a formal variable; define a differential on $M[[t]]$ by setting
\begin{equation}\label{eq:differential}
d(a)= \mu^1(a) + t\mu^2(a, \epsilon) + \ldots + t^n \mu^{n+1} (a, \epsilon, \ldots, \epsilon) + \ldots
\end{equation}
for all $a\in M$, and extending this linearly in $t$ to $M[[t]]$. The $A_\infty$ relations for $M$ imply that $d^2 =0$.
Consider $H(M[[t]],d)$; as $M$ is finite (as a $\mathbb{Z}_2$-vector space), this is a finitely generated $\mathbb{Z}_2[[t]]$-module. Since $\Z_2[[t]]$ is a P.I.D., the standard decomposition theorem gives:
\bq
H(M[[t]],d) \cong \mathbb{Z}_2 [[t]] \oplus \ldots \oplus \mathbb{Z}_2[[t]] \oplus \mathbb{Z}_2[[t]] / (t^{k_1}) \oplus \ldots \oplus \mathbb{Z}_2[[t]] / (t^{k_n})
\eq
for some positive integers $k_i$. 

Our strategy is as follows: using this description, we construct a finite dimensional $A_\infty$ module $N$, with structure as described in the proposition, together with a quasi-isomorphism 
\bq
(N[[t]],d) \to (M[[t]],d).
\eq
We then check that the map this induces between $N$ and $M$ is a quasi-isomorphism of $A_\infty$ modules.

\vspace{1pc}

Let $r$ be the number of free summands in the direct sum decomposition of $H(M[[t]], d)$. We define $N$ to be an $A$-module with $r+2n$ generators (as a $\mathbb{Z}_2$-vector space),  given by the direct sum of the following $A$-modules: $r$ copies of $\mathbb{Z}_2$, and $R_{k_i}$ for $i = 1, \ldots, n$.
Now consider $N[[t]]$, with differential $d$ defined as in equation \eqref{eq:differential}. By construction, $d$ is zero on the first $r$ copies of $\mathbb{Z}_2[[t]]$, 
$d(r_{k_i}^0) = t^{k_i}r_{k_i}^1$, and $d(r_{k_i}^1) = 0$. 
There is an obvious isomorphism of $\Z_2[[t]]$-modules $\tilde{f}: H(N[[t]],d) \cong H(M[[t]],d)$. $N[[t]]$ is a free $\Z_2[[t]]$-module; we lift $\tilde{f}$ to a quasi-isomorphism $f: N[[t]] \to M[[t]]$ of $\Z_2[[t]]$-modules, by specifying the image of our given basis:
\begin{itemize}
\item Let $e_i$, $1 \leq i \leq r$, be a generator for the $i^{th}$ $\Z_2[[t]]$ summand in $N[[t]]$. This is naturally a generator for the $i^{th}$ $\Z_2[[t]]$ summand of $H(N[[t]],d)$; map it to any lift in $M[[t]]$ of its image under $\tilde{f}$.
\item  Map $r_{k_i}^1$ to any lift in $M[[t]]$ of the generator of $\Z_2[[t]] / (t^{k_i})$, say $\rho_i^1$. Then $t^{k_i}\rho_i^1=0 \in H(M[[t]],d)$; pick $\rho_i^0$ such that $t^{k_i} \rho_i^1 = d \rho_i^0$ in $M[[t]]$. Now set $f(r_{k_i}^0) = \rho_i^0$.
\end{itemize}
This gives a well-defined chain map, that, by construction, is an isomorphism on cohomology.

Let $\Pi_n: M[[t]] \to M$ be the projection to the coefficient of $t^n$ in the power series. We define the maps $f_n$ by setting
\begin{displaymath}
f_n(a, \overbrace{\epsilon, \ldots, \epsilon}^{n-1}) = \Pi_{n-1} (f(a))
\end{displaymath}
and requiring $f$ to be strictly unital. We must check that this is indeed a map of $A_\infty$ modules. This essentially follows from the fact that $f: N[[t]] \to M[[t]]$ was a chain map, though one has to pay attention to some details. Let us spell these out.

Fix $n$, and consider $f_{n+1}: N \otimes A^{\otimes n} \to M$. First suppose you set all entries for $A$ to $\e$. Note that for a summand on the left-hand side of equation \eqref{eq:relationformorphism} to be non-zero, we need $r=0$. (At this point the $A_\infty$ structure on $hom(L,L)$ is crucial: knowing the multiplicative structure for cohomology does not suffice.) As $f$ is a chain map, for all $a \in N$, $df(a)=fd(a) \in M[[t]]$. Taking $\Pi_n$, this precisely tells us that equation \eqref{eq:relationformorphism} is satisfied. 
It remains to consider the case where some of the entries for $A$ are set to $1$. From strict unitality, equation \eqref{eq:relationformorphism} holds trivially unless there is exactly one entry equal to $1$, and all others are $\e$. Let us examine the left-hand side of equation \eqref{eq:relationformorphism}. If $1$ is in the final position, the only (potential) non-zero terms are
\begin{itemize}
\item for $r=0$, $f_1(\mu_2(a,1)) = f_1(a)$ for $n=2$;
\item for $r \neq 0$, $f_l(a, \e, \ldots, \e, \mu_2(\e, 1)) = f_l(a, \e, \ldots, \e)$ for $n=l+1 \geq 3$.
\end{itemize}
If $1$ is not in the final position, there are exactly two non-zero terms, both for $s = 2$. One involves $\mu_2(1, \e)$, and the other $\mu_2(\e,1)$ or $\mu_2(a,1)$, and, after contracting, they cancel out.
The right-hand side of equation \eqref{eq:relationformorphism} can only be non-zero when $1$ is in the final position, in which case we get
\bq
\mu_2 (f_l (a, \e, \ldots, \e), 1) = f_l (a, \e, \ldots, \e) 
\eq
for $n=l+1 \geq 2$.
Thus the map we defined is indeed a morphism of $A_\infty$-modules.

Finally, we show that $f: N \to M$ is a quasi-isomorphism. Consider the two short exact sequences of chain complexes, fitting into a commutative diagram:

\bq
\xymatrix{
0 \ar[r] & N[[t]] \ar[r]^{\cdot t} \ar[d]_f & N[[t]] \ar[r]  \ar[d]_f & N \ar[r] \ar[d]_{f_1} & 0 \\
0 \ar[r] & M[[t]] \ar[r]^{\cdot t}          & M[[t]] \ar[r]           & M \ar[r]          & 0 }
\eq
Each short exact sequence induces a long exact sequence on cohomology, and the vertical maps in the diagram induce maps between these long exact sequences. As we already know that $f$ is a quasi-isomorphism, the five-lemma tells us that $f_1$ is one, too.

%--------------------------------------------------------------------------------
%--------------------------------------------------------------------------------
%GENERALISING SEIDEL'S LES TO ITERATES OF DEHN TWISTS.
 
\section{Twisted complexes and iterations of Dehn twists}\label{sec:generalisation}

Suppose $M$ is an exact symplectic manifold with contact type boundary, $L$ a Lagrangian sphere, and $L_0$ any exact Lagrangian. This section contains the computation of an expression for  the iterated Dehn twist $\tau^n_L L_0$ as a twisted complex. The starting point is: 

\begin{thm}\label{th:Seidel}(\cite[Theorem 1]{Seidel03} and \cite[Corollary 17.17]{Seidel08})
Suppose $A$ is a representative of the Fukaya category of $M$ that is finite and strictly unital. Then $\tau_L L_0$, the Dehn twist of $L_0$ about $L$, is quasi-isomorphic to 
\bq
Cone (ev: hom(L,L_0) \otimes L \to L_0 )
\eq
as objects of $Tw\A$.
\end{thm}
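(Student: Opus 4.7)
The plan is to construct an explicit morphism $\Phi: \text{Cone}(ev) \to \tau_L L_0$ in $Tw\A$ and verify it is a quasi-isomorphism by invoking Yoneda.

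For the construction, I would use a Lefschetz fibration model: choose $\pi: E \to \C$ with smooth fiber symplectomorphic to $M$ and a single critical value whose vanishing cycle is $L$, so that monodromy around this value realises $\tau_L$. The Lagrangian $L_0$ lifts to a trace Lagrangian in $E$ via parallel transport along a suitable arc, $L$ lifts to a Lefschetz thimble $\Delta_L$, and $\tau_L L_0$ appears as the parallel transport of $L_0$ once around the critical value. Counting pseudo-holomorphic sections of $\pi$ with boundary on these lifts and asymptotic conditions matching Floer generators then yields, after standard transversality, a morphism $\Phi \in hom_{Tw\A}(\text{Cone}(ev), \tau_L L_0)$. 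The decomposition $\text{Cone}(ev) = L_0 \oplus (hom(L, L_0) \otimes L)$ splits $\Phi$ according to the intersection number (zero or one) of the section with $\Delta_L$, and a degeneration argument identifies the resulting Maurer--Cartan condition with the requirement that the twisted-complex connection on the cone be precisely $ev$.

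To show $\Phi$ is a quasi-isomorphism, I would invoke Yoneda: it is enough to verify that for every test Lagrangian $L_1$, post-composition with $\Phi$ induces an isomorphism $Hom(L_1, \text{Cone}(ev)) \to HF(L_1, \tau_L L_0)$. The source fits into the long exact sequence coming from the distinguished triangle $hom(L, L_0) \otimes L \to L_0 \to \text{Cone}(ev)$, and the target fits into Seidel's original long exact sequence \cite{Seidel03}, which relates exactly $HF(L_1, L_0)$, $HF(L_1, L) \otimes HF(L, L_0)$ and $HF(L_1, \tau_L L_0)$. Careful bookkeeping shows that the construction of $\Phi$ intertwines these two sequences and induces the identity on the corner terms, so the five-lemma forces the middle map to be an isomorphism.

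The main obstacle is analytic: verifying the Maurer--Cartan/chain-map property for $\Phi$ and the compatibility between the two long exact sequences requires a delicate study of moduli spaces of pseudo-holomorphic sections of $\pi$, with full transversality, compactness and gluing/degeneration arguments. Lifting this from the cohomological statement of \cite{Seidel03} to the twisted-complex statement of \cite[Cor.~17.17]{Seidel08} is itself substantial: one must carry out the gluing to compatibly keep track of the $A_\infty$-functor structure, rather than merely the map on cohomology groups.
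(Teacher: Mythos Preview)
The paper does not give its own proof of this theorem: it is stated with attribution to \cite[Theorem 1]{Seidel03} and \cite[Corollary 17.17]{Seidel08} and used as a black box, serving as the base case for the inductive argument in Proposition \ref{th:bartwistedcomplex}. So there is nothing in the paper to compare your proposal against.

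That said, your sketch is a fair high-level summary of the strategy in the cited references: the Lefschetz fibration model, the count of pseudo-holomorphic sections to build the comparison map, and the five-lemma argument against Seidel's original long exact sequence are indeed the ingredients. You are also right to flag that the passage from the cohomological exact sequence of \cite{Seidel03} to the twisted-complex statement of \cite[Corollary 17.17]{Seidel08} is where the real work lies, and that this requires controlling the full $A_\infty$ structure rather than just cohomology. If your intent was to reproduce the cited proof rather than the paper's, the outline is sound but remains a sketch: the analytic core (transversality, compactness, gluing for the relevant moduli spaces of sections) is acknowledged but not addressed, and that is the bulk of the argument in \cite{Seidel03} and \cite{Seidel08}.
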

A basic property of twisted complexes, that gets us off the ground, is:

\begin{lem}\label{qicones}
Suppose $X$ and $Y$ are quasi-isomorphic in $Tw\A$; let $Z$ be any object of $Tw\A$, and
\begin{eqnarray}
ev_X: hom_{Tw\A}(Z,X) \otimes Z \to X \\
ev_Y: hom_{Tw\A}(Z,Y) \otimes Z \to Y
\end{eqnarray}
Then $Cone(ev_X)$ and $Cone(ev_Y)$ are quasi-isomorphic objects of $Tw\A$.
\end{lem}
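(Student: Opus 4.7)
The plan is to realise both cones as third vertices of distinguished triangles in $H(Tw\A)$, build a morphism between these triangles whose other two components are quasi-isomorphisms, and then invoke the triangulated structure of $H(Tw\A)$ to conclude.

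Let $f \in hom_{Tw\A}(X,Y)$ be a $\mu^1_{Tw\A}$-cocycle whose class $[f]$ is an isomorphism in $H(Tw\A)$. First I would build the left vertical arrow by post-composition. The map
\begin{equation*}
f_\# \colon hom_{Tw\A}(Z,X) \longrightarrow hom_{Tw\A}(Z,Y), \qquad \phi \mapsto \mu^2_{Tw\A}(f,\phi),
\end{equation*}
is a chain map for the $\mu^1_{Tw\A}$-differentials (by the $A_\infty$-associativity equation, using $\mu^1_{Tw\A}(f)=0$), and is a quasi-isomorphism because $[f]$ is invertible. Tensoring with the strict unit of $hom_\A(Z,Z)$,
\begin{equation*}
\tilde f \;:=\; f_\# \otimes 1_Z \colon hom(Z,X)\otimes Z \longrightarrow hom(Z,Y)\otimes Z
\end{equation*}
is a morphism in $\Sigma\A$, hence in $Tw\A$, and a short computation using strict unitality together with the formula $\delta = \mu^1_{Tw\A}\otimes 1 + \mathrm{Id}\otimes \delta_Z$ for the twisted differentials on these objects shows $\mu^1_{Tw\A}(\tilde f)=0$. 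Identifying $Cone(\tilde f)$ with $Cone(f_\#)\otimes Z$, the acyclicity of $Cone(f_\#)$ (since $f_\#$ is a quasi-isomorphism of $\Z_2$-complexes) combined with a filtration argument on the twisted complex structure forces $Cone(\tilde f)$ to vanish in $H(Tw\A)$, so $\tilde f$ is a quasi-isomorphism.

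Next I would check that the square
\begin{equation*}
\xymatrix@C=3em{ hom(Z,X)\otimes Z \ar[r]^-{ev_X}\ar[d]_{\tilde f} & X \ar[d]^f \\ hom(Z,Y)\otimes Z \ar[r]^-{ev_Y} & Y }
\end{equation*}
commutes up to $\mu^1_{Tw\A}$-coboundary in $hom_{Tw\A}(hom(Z,X)\otimes Z,\,Y)$. Expanding $ev_X = \sum e_i^\vee \otimes e_i$ and $ev_Y$ in the dual fashion for a basis of $hom(Z,Y)$, a direct application of Lemma \ref{th:evmaps} shows that on cohomology both $f\circ ev_X$ and $ev_Y\circ \tilde f$ represent $\sum e_i^\vee \otimes \mu^2_{Tw\A}(f,e_i)$. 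The cochain-level defect is precisely the kind of expression that appears in the $A_\infty$-associativity equation for $\mu^2_{Tw\A}$ composed with the evaluation map, and that equation produces an explicit homotopy $h \colon hom(Z,X)\otimes Z \to Y$ with $\mu^1_{Tw\A}(h) = f\circ ev_X + ev_Y\circ \tilde f$ (no signs, since we are over $\Z_2$). This is the main technical step and the only place where strict unitality plays a substantive role.

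Given $h$, I would assemble a morphism of cones $F \colon Cone(ev_X) \to Cone(ev_Y)$ whose restrictions to the summands $hom(Z,X)\otimes Z$ and $X$ are $\tilde f$ and $f$, with off-diagonal component $h \colon hom(Z,X)\otimes Z \to Y$; the $\mu^1_{Tw\A}$-closedness of $F$ is exactly the homotopy identity for $h$. By construction $F$ together with $\tilde f$ and $f$ forms a morphism between the distinguished triangles $hom(Z,X)\otimes Z \xrightarrow{ev_X} X \to Cone(ev_X)$ and $hom(Z,Y)\otimes Z \xrightarrow{ev_Y} Y \to Cone(ev_Y)$, with quasi-isomorphisms in the first two positions. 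Applying $Hom_{Tw\A}(W,-)$ and the five-lemma to the associated long exact sequences forces $F_\ast$ to be an isomorphism on $Hom_{Tw\A}(W,-)$ for every object $W$; Yoneda then concludes that $F$ is an isomorphism in $H(Tw\A)$, so $Cone(ev_X)$ and $Cone(ev_Y)$ are quasi-isomorphic. The main obstacle is the production of the homotopy $h$ in paragraph three; everything else is formal once it is in hand.
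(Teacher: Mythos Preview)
Your argument is correct in outline: build a morphism of distinguished triangles with two quasi-isomorphisms on the edges, then use the five-lemma and Yoneda to force the map on cones to be an isomorphism. The one place you flag as delicate---producing the homotopy $h$---does go through; the $A_\infty$ relation for $\mu^3_{Tw\A}$ applied to $(f, ev_X, -)$, together with strict unitality for the $1_Z$ in $\tilde f$, gives exactly what you need.

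That said, your proof is substantially more work than the paper's, which is a one-line citation of \cite[Corollary 3.16]{Seidel08}. That corollary packages precisely the statement that the abstract twist $T_Z(X) = Cone(ev_X)$ is invariant, up to isomorphism in $H(Tw\A)$, under replacing $X$ by a quasi-isomorphic object; the lemma is then immediate. What you have done is essentially reproduce the proof of that corollary by hand in this special case. This has the virtue of being self-contained and of making the morphism between the cones explicit, which could be useful if one later needed to track what the quasi-isomorphism actually is. The cost is that you have to carry out several small computations (closedness of $\tilde f$, acyclicity of $Cone(\tilde f)$ via the filtration argument, the homotopy $h$) that are already absorbed into Seidel's general machinery. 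Neither approach is wrong; yours is just the unrolled version of the reference the paper invokes.
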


\begin{proof}
This follows immediately from \cite{Seidel08}, Corollary 3.16.
\end{proof}

This means that one can write down an expression for $\tau^n_L L_0$ as an iteration of cones, and look to simplify it.
\newline

We know that $\A$ is cohomologically finite and c-unital (subsection \ref{sec:Fukayacategory}), and that $CF_{\A}(L,L)$ is quasi-isomorphic to $A$ (Proposition \ref{th:lagspherestructure}). By Proposition \ref{th:nicerepresentative}, we may assume that $\A$ is minimal, finite, strictly unital, and that $hom_\A(L,L)=A$ as $A_\infty$ algebras. We will heavily rely on the the simplicity of this structure to reduce the expression for $\tau_L^n L_0$.

The manipulations of twisted complexes that follow are valid for any finite, strictly unital $A_\infty$ category $\A$, with a distinguished element $L$ such that $hom(L,L) = A$, and $L_0, L_1$ any objects of $\A$. 
Unlike the previous assumptions, it is not crucial that $\A$ be minimal; it will simply make our computations, and the complexes we encounter, cleaner.

\begin{prop}\label{th:bartwistedcomplex}
Let $T^n_L L_0$ denote the twisted complex
\bq L_0 \; \oplus \; hom(L,L_0)\otimes L \; \oplus \; hom(L,L_0)  \otimes ( \e)\otimes L \; \oplus \ldots \oplus \; hom(L,L_0)\otimes \overbrace{(\e) \ldots (\e)}^{n-1} \otimes L
\eq
where the connection is given by 
\begin{itemize}
\item 0 on the first summand;
\item $ev$, the evaluation map $hom(L, L_0) \otimes L \to L_0$  on the second summand;
\item $Id\otimes ev + \mu^2 \otimes 1$ on the third summand, where $ev$ is the evaluation map $\e \otimes L \to L$;
\item \ldots
\item On the $r^{th}$ summand, 
\bq
Id^{\otimes(r-2)} \otimes ev +
\sum (\mu^i \otimes Id^{\otimes j}) \otimes 1  
\eq
where the sum ranges over all decompositions $r-1= i+j$, with $i>1$. Note that $ \mu^i \otimes Id^{\otimes j} \otimes 1$ maps to the $(j+2)^{th}$ summand, and $Id^{\otimes(r-2)} \otimes ev$ maps to the $(r-1)^{th}$ summand.
\end{itemize}
We claim that $T^n_L L_0$ is quasi-isomorphic to $\tau^n_L L_0$.
\end{prop}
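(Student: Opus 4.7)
The plan is to induct on $n$, with base case $n=1$ given directly by Theorem \ref{th:Seidel}. For the inductive step, I would assume $\tau_L^n L_0 \simeq T_L^n L_0$ in $Tw\A$ and then apply Lemma \ref{qicones} (with $Z = L$) to get
\bq
\tau_L^{n+1} L_0 \;\simeq\; \tau_L(\tau_L^n L_0) \;\simeq\; \mathrm{Cone}\bigl(ev : hom(L, T_L^n L_0) \otimes L \to T_L^n L_0\bigr),
\eq
so the entire task reduces to rewriting this cone as $T_L^{n+1} L_0$.

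The next step is to compute $hom(L, T_L^n L_0)$ summand by summand. Because $\A$ is minimal with $hom(L,L) = A = \Z_2\langle 1\rangle \oplus \Z_2\langle \e\rangle$, for $k = 1, \ldots, n$ one has
\bq
hom\bigl(L,\, hom(L, L_0) \otimes (\e)^{\otimes k-1} \otimes L\bigr) \;=\; hom(L, L_0) \otimes (\e)^{\otimes k-1} \otimes A,
\eq
plus one copy of $hom(L, L_0)$ from the $L_0$ summand. Each $A$-factor splits canonically as $\Z_2\langle 1\rangle \oplus \Z_2\langle\e\rangle$. Hence the cone decomposes as $T_L^n L_0$ direct-summed with the $n$ pieces $hom(L, L_0) \otimes (\e)^{\otimes k-1} \otimes \Z_2\langle 1\rangle \otimes L$ (one for each $k=0,1,\dots,n-1$) and the $n$ pieces $hom(L, L_0) \otimes (\e)^{\otimes k-1} \otimes (\e) \otimes L$, with differential built from the Maurer--Cartan element of $T_L^n L_0$ together with the evaluation.

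The crucial simplification is to kill the $n$ acyclic summands coming from the $\Z_2\langle 1 \rangle$ factors. For each $k$, strict unitality makes $ev: 1 \otimes L \to L$ act as an honest identity, so the evaluation component sending $hom(L, L_0) \otimes (\e)^{\otimes k-1} \otimes \Z_2\langle 1\rangle \otimes L$ into the summand $hom(L, L_0) \otimes (\e)^{\otimes k-1} \otimes L$ of $T_L^n L_0$ is, at leading order, the identity. Applied iteratively from the top of the filtration down, this pairs off the $2n$ offending summands; Lemma \ref{th:isobetweentwosummands} then produces a quasi-isomorphic twisted complex whose underlying object is precisely $T_L^{n+1} L_0$.

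It remains to identify the connection on this quotient. The terms $Id^{\otimes(r-2)} \otimes ev$ come from the surviving evaluation pieces $ev: (\e) \otimes L \to L$, while the terms $(\mu^i \otimes Id^{\otimes j}) \otimes 1$ descend from the Maurer--Cartan terms of $T_L^n L_0$, transported to $hom$--space by Lemma \ref{th:evmaps}, which is exactly the tool that lets evaluations be absorbed into higher $A_\infty$-compositions without introducing residual error. The restriction $i > 1$ reflects the fact that $\A$ is minimal, so $\mu^1 = 0$. I expect the main obstacle to be purely bookkeeping at this last step: verifying that the successive changes of basis used to kill the $\Z_2\langle 1\rangle$ summands introduce no spurious contributions, and that strict unitality together with Lemma \ref{th:evmaps} cleanly repackages every remaining term in the stated form. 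Induction then completes the argument.
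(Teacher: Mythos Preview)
Your proposal is correct and is essentially the paper's own argument: induct using Theorem \ref{th:Seidel} and Lemma \ref{qicones}, split each $hom(L,L)$ factor as $\Z_2\langle 1\rangle \oplus \Z_2\langle\e\rangle$, and cancel each $\langle 1\rangle$-piece against (the diagonal in) a pair of matching summands via Lemma \ref{th:isobetweentwosummands}, with Lemma \ref{th:evmaps} controlling the residual connection. One small caution: the paper performs the cancellations from shortest tensor length upward, which is what guarantees at each stage that the $\langle 1\rangle$-piece together with its image really is a subcomplex mapped isomorphically by $\delta$; your phrase ``from the top of the filtration down'' should be read in that order for the bookkeeping to go through cleanly.
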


\begin{rmk}  $(\e)$ is a one-dimensional vector space; one could also present $T^n_L L_0$ using multiple copies of $hom(L,L_0)\otimes L$ and push the $\e$'s over to the expression for the connection. We will obtain the complex in Proposition \label{th:bartwistedcomplex} as a quotient; from that prospective, the presentation above seemed the most natural one.
\end{rmk}

We shall prove this by induction on $n$. Theorem \ref{th:Seidel} gives the case $n=1$.
We start with a preliminary computation.
\begin{lem}\label{th:mu_1structureonhom}
Fix an object $L_1$ of $\A \subset Tw\A$.  Then
\begin{multline}
hom(L_1, T^n_L L_0) = 
hom(L_1, L_0) \; \oplus \; hom(L,L_0)\otimes hom(L_1, L) \\ 
\oplus \; hom(L,L_0)  \otimes ( \e ) \otimes hom(L_1,L)\; \oplus \ldots \oplus \; hom(L,L_0)\otimes (\e) \ldots (\e) \otimes hom(L_1, L)
\end{multline}
and $\mu^1_{Tw\A}$ acts
\begin{itemize}
\item on the first summand, by zero;
\item on the second summand, by $\mu^2_{\A}$;
\item \ldots
\item on the $r^{th}$ summand, by
\bq
\sum_{r= i+j, j >1} \big( Id^{\otimes i} \otimes \mu^j_\A  +  \mu^j_\A \otimes Id^{\otimes i} \big).
\eq
\end{itemize}
\end{lem}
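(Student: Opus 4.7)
The vector space decomposition is immediate. Since $L_1 \in \A \subset Tw\A$ satisfies $\delta_{L_1}=0$, we have $hom_{Tw\A}(L_1, T^n_L L_0) = hom_{\Sigma\A}(L_1, T^n_L L_0)$, which splits summand-by-summand. Using the identification $hom_{\Sigma\A}(L_1, V \otimes L) = V \otimes hom_\A(L_1, L)$ for any $\Z_2$-vector space $V$ on each summand of $T^n_L L_0$ gives the claimed direct-sum description.

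To compute $\mu^1_{Tw\A}$, I would use the fact that $\delta_{L_1}=0$ forces the defining formula to simplify to
\[
\mu^1_{Tw\A}(c) = \sum_{k \geq 0} \mu^{k+1}_{\Sigma\A}\bigl(\underbrace{\delta, \ldots, \delta}_{k}, c\bigr),
\]
with $\delta = \delta_{T^n_L L_0}$. I would then decompose $\delta = \delta^{ev} + \delta^\mu$, where $\delta^{ev}$ collects the evaluation-type pieces $Id^{\otimes (s-2)} \otimes ev$ (one per summand $s\geq 2$), acting on the rightmost $L$-factor together with its adjacent $\e$ or $hom(L,L_0)$ factor, and $\delta^\mu$ collects the product-type pieces $\mu^i_\A \otimes Id^{\otimes j} \otimes 1$, acting on the leftmost $hom(L,L_0)$-factor together with the leading $\e$'s. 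The crucial structural observation is that $\delta^{ev}$ and $\delta^\mu$ affect disjoint tensor factors.

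Fix a component $c_r = \alpha \otimes \e \otimes \cdots \otimes \e \otimes \beta$ of the $r^{th}$ summand. A string of $k$ consecutive pieces of $\delta^{ev}$ applied to $c_r$ is a cascade of evaluations; by iterated application of Lemma \ref{th:evmaps}, it collapses to a single $\mu^{k+1}_\A$ acting on the rightmost $k+1$ tensor factors, landing in the $(r-k)^{th}$ summand as
\[
\alpha \otimes \e^{\otimes (r-k-2)} \otimes \mu^{k+1}_\A(\e,\ldots,\e,\beta).
\]
With $j=k+1$ and $i=r-k-1$, this is precisely the $Id^{\otimes i} \otimes \mu^j_\A$ term of the stated formula. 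Dually, a string of $k$ consecutive $\delta^\mu$-pieces acts as $\mu^{k+1}_\A$ on the leftmost $k+1$ factors, yielding $\mu^{k+1}_\A(\alpha,\e,\ldots,\e)\otimes\e^{\otimes\text{rem}}\otimes\beta$, the $\mu^j_\A \otimes Id^{\otimes i}$ term.

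The main obstacle is to verify that mixed strings (with both types of pieces appearing in the same $\mu^{k+1}_{\Sigma\A}$ expression) contribute only to terms of the two pure forms above. Strict unitality of $\A$ is essential here: the strict units in $hom(L,L)$, together with the $A_\infty$ associativity relations, cause mixed compositions to collapse so that each non-zero contribution can be unambiguously identified as either $ev$-type or $\mu$-type after applying Lemma \ref{th:evmaps} to maximal blocks of evaluations. A careful term-by-term bookkeeping, exploiting the disjoint-factor structure of $\delta^{ev}$ and $\delta^\mu$, then completes the identification of $\mu^1_{Tw\A}$ with the formula in the statement.
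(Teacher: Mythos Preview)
Your overall strategy---expand $\mu^1_{Tw\A}$ using $\delta_{L_1}=0$, decompose $\delta = \delta^{ev} + \delta^\mu$, and invoke Lemma~\ref{th:evmaps} for the evaluation cascades---matches the paper's approach. However, your analysis of $\delta^\mu$ contains a genuine error.

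You claim that ``a string of $k$ consecutive $\delta^\mu$-pieces acts as $\mu^{k+1}_\A$ on the leftmost $k+1$ factors,'' dual to the behaviour of $\delta^{ev}$. This is false. Each piece of $\delta^\mu$ has the form $(\mu^i_\A \otimes Id^{\otimes j}) \otimes 1_L$, where the $\A$-morphism part is the \emph{strict unit} $1_L \in hom_\A(L,L)$. When you feed several such pieces into $\mu^{k+1}_{\Sigma\A}$, the $\A$-side becomes $\mu^{k+1}_\A(1_L,\ldots,1_L,\beta)$, which vanishes by strict unitality as soon as $k\geq 2$. There is no cascading; the situation is not dual to $\delta^{ev}$, whose $\A$-parts are the evaluated elements $\e$ (or the basis of $hom(L,L_0)$) rather than units. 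The correct accounting is:
\begin{itemize}
\item a \emph{single} $\delta^\mu$ (the term $\mu^2_{\Sigma\A}(\delta^\mu,c)$) already produces \emph{all} of the $\mu^j_\A\otimes Id^{\otimes i}$ contributions at once, since $\delta^\mu$ is itself a sum over $i>1$ and $\mu^2_\A(1_L,\beta)=\beta$;
\item any term in $\mu^{k+1}_{\Sigma\A}(\delta,\ldots,\delta,c)$ with $k\geq 2$ and at least one $\delta^\mu$-piece vanishes by strict unitality;
\item pure $\delta^{ev}$-strings of length $k\geq 1$ give the $Id^{\otimes i}\otimes\mu^{k+1}_\A$ terms via Lemma~\ref{th:evmaps}, as you say;
\item the $k=0$ term $\mu^1_{\Sigma\A}(c)$ vanishes by minimality of $\A$.
\end{itemize}
This is exactly what the paper's sample computation for the third summand exhibits: the $\mu^3_{\Sigma\A}(ev,\mu^2\otimes 1,\cdot)$ term vanishes by strict unitality, not by any ``disjoint-factor'' bookkeeping. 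Once you correct the role of $\delta^\mu$, the mixed-string analysis you flagged as ``the main obstacle'' becomes a one-line consequence of strict unitality, and no appeal to the $A_\infty$ associativity relations is needed.
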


\begin{proof} By definition,
\bq \mu^1_{Tw\A} (a) = \sum_{i \geq 0} \mu^{1+i}_{\Sigma\A} ( \delta_{T^n_L L_0} , \ldots,
\delta_{T^n_L L_0}, a)
\eq
After expanding, the claim boils down to the $A_\infty$-associativity equations for $hom(L,L_0)$ together with Lemma  \ref{th:evmaps}.  Let us do the calculation for the third summand. Consider a class $c = a \otimes \e \otimes b$ where $a \in hom(L, L_0)$ and $b \in hom (L_1, L)$. We have
\begin{eqnarray}
\mu^1_{Tw\A} (a \otimes \e \otimes b) & = & 
\mu^1_{\Sigma \A} (a \otimes \e \otimes b) 
+ \mu^2_{\Sigma \A} (Id \otimes ev + \mu^2 \otimes 1, a \otimes \e \otimes b) \nonumber \\&&
+ \mu^3_{\Sigma \A} (ev, Id \otimes ev + \mu^2 \otimes 1, a \otimes \e \otimes b) \\
& = & 0 + 
\mu^2_{\Sigma \A} (Id \otimes ev , a \otimes \e \otimes b)
+ \mu^2_{\Sigma \A} ( \mu^2 \otimes 1, a \otimes \e \otimes b) \nonumber \\&&
+ \mu^3_{\Sigma \A} (ev,  \mu^2 \otimes 1, a \otimes \e \otimes b) 
+ \mu^3_{\Sigma \A} (ev, Id \otimes ev, a \otimes \e \otimes b)
\\
& = & 
a \otimes \mu^2 (\e ,b)
+ \mu^2(a , \e) \otimes b + 0 
+ \mu^3 (a, \e, b)
\end{eqnarray}
where we made use of the minimality of the $A_{\infty}$ structure of $\A$ at the second step, and of its strict unitality at the third.
\end{proof}
Suppose the claim in Proposition \ref{th:bartwistedcomplex} is true for all powers of $\tau_L$ up to $\tau^{n-1}_L$. By Theorem \ref{th:Seidel} and Lemma \ref{qicones}
\begin{align}
\tau^{n}_L (L_0) & \cong Cone(ev: hom(L,T^{n-1}_L L_0) \otimes L \to T^{n-1}_L L_0) \\
& = T^{n-1}_L L_0   \oplus hom(L, T^{n-1}_L L_0)\otimes L
\end{align}
with the cone connection.

Lemma \ref{th:mu_1structureonhom} provides us with an expression for the differential on  $hom(L, T^{n-1}_L L_0)$, and thus for the cone connection on 
\bq
T^{n-1}_L L_0 \oplus hom(L, T^{n-1}_L L_0)\otimes L
\eq solely in terms of the composition maps of $\A$. Let us denote this complex by $\widehat{T^{n}_L} L_0$. 
Our strategy is as follows: by repeated quotienting of subcomplexes, we exhibit a complex that is quasi-isomorphic to $\widehat{T^n_L} L_0$, and of the desired form. Each subcomplex we shall quotient out by will be the direct sum of two pieces between which the connection gives an isomorphism, ensuring that quotienting does not change what element of $H(Tw\A)$ we have (see Lemma \ref{th:isobetweentwosummands}).
This is chiefly a matter of book-keeping; we start by presenting the cases $n=2$ and $3$, in the hope that they render the general computation less forbidding.
\vspace{1pc}

The twisted complex $\widehat{T^2_L} L_0$ is given by

\begin{equation} \xymatrix@C=+2em{
hom(L,L_0)\otimes hom(L,L) \otimes L
\ar[rr]_-{Id \otimes ev} 
\ar[d]^{\mu^2 \otimes 1} 
& & 
hom(L, L_0) \otimes L \ar[d]_{ev} 
 \\ hom(L,L_0) \otimes L  \ar[rr]_-{ev}
 & &  L_0
}
\end{equation}
where the complex is the direct sum of the four terms in the diagram, and the maps give the connection $\delta$ (we will suppress subscripts on connections for the rest of this section). 
$T^{n-1}_L L_0$ is on the right column, and $hom(L, T^{n-1}_L L_0) \otimes L$ on the left one.
Consider $hom(L,L_0) \otimes 1 \otimes L$; its image under $\delta$ is the diagonal copy $\Delta$ of $hom(L,L_0) \otimes L$; moreover, $hom(L,L_0) \otimes 1 \otimes L$, together with its image, form a subcomplex. Quotient this out to get the complex
\begin{equation}
hom(L,L_0) \otimes \e \otimes L \xrightarrow{\mu^2 \otimes 1 + Id \otimes ev\,\,\,} hom(L,L_0) \otimes L  \xrightarrow{\quad ev \quad} L_0
\end{equation}
Further, note that $\delta$ gave an isomorphism 
\bq
hom(L,L_0) \otimes 1 \otimes L \xrightarrow{(\mu^1 \otimes 1, Id \otimes ev)}
\Delta \subset hom(L,L_0) \otimes L \; \times \; hom(L,L_0) \otimes L
\eq
Thus the new complex is quasi-isomorphic to the old one, using Lemma \ref{th:isobetweentwosummands}.
\vspace{1pc}

Can this be generalised? Let us look at $n=3$. We get:

\begin{equation} \xymatrix@C=+2em{
hom(L, L_0) \otimes \e \otimes hom(L,L) \otimes L  
\ar[d]^{\mu^2\otimes Id \otimes 1+ Id \otimes \mu^2 \otimes 1}
\ar@/^-7pc/[dd]_{\mu^3\otimes 1}
 \ar[rr]_-{Id^{\otimes 2}\otimes ev} & & 
hom(L,L_0) \otimes \e \otimes L 
\ar[d]^{\mu^2 \otimes 1 + Id \otimes ev}
 \\
 hom(L,L_0) \otimes hom(L,L) \otimes L 
\ar[rr]_-{Id \otimes ev} 
\ar[d]^{\mu^2 \otimes 1} & & 
hom(L, L_0) \otimes L \ar[d]_{ev}
 \\
hom(L,L_0) \otimes L  \ar[rr]_-{ev} & &  L_0
}
\end{equation}
As before, the image of $hom(L,L_0) \otimes 1 \otimes L$ under $\delta$ is the diagonal copy of $hom(L,L_0) \otimes L$, and 
\begin{itemize}
\item $hom(L,L_0) \otimes 1 \otimes L$ and its image under $\delta$ form a subcomplex.
\item $\delta$ gives an isomorphism of $hom(L, L_0) \otimes 1 \otimes L$ onto its image.
\end{itemize}
The quotient complex, quasi-isomorphic to the old one, is:

\begin{equation} \xymatrix@C=+2em{
hom(L, L_0) \otimes \e \otimes hom(L,L) \otimes L  
\ar[d]_{\mu^2\otimes Id\otimes 1+ Id \otimes \mu^2 \otimes 1}
\ar[dr]_{\mu^3\otimes 1}
 \ar[r]_-{Id^{\otimes 2}\otimes ev} &
hom(L,L_0) \otimes \e \otimes L 
\ar[d]^{\,\, \mu^2 \otimes 1 + Id \otimes ev}
 \\
 hom(L,L_0) \otimes \e \otimes L 
\ar[r]_{\mu^2 \otimes 1 + Id \otimes ev \,\,\,}  
& hom(L,L_0) \otimes L  \ar[d]_-{ev}   \\
&  L_0
}
\end{equation}
Now, consider the image under $\delta$ of $hom(L,L_0) \otimes \e \otimes 1 \otimes L$; this is the diagonal copy of $hom(L,L_0) \otimes \e \otimes L$. ($\mu^2 \otimes Id\otimes 1$ has no image by construction, and $\mu^3 \otimes 1$ has none because of unitality.) Again, $hom(L,L_0) \otimes \e \otimes 1 \otimes L$ and its image form a subcomplex, and $\delta$ gives an isomorphism between them. Quotienting gives the quasi-isomorphic twisted complex
\begin{equation}
\xymatrix@C=7.2em{
hom(L,L_0) \otimes \e \otimes \e \otimes L 
\ar[r]^-{\mu^2 \otimes Id \otimes 1+ Id^{\otimes 2} \otimes ev }
\ar@/_3pc/[rr]^{\mu^3 \otimes 1}
& hom(L,L_0) \otimes \e \otimes  L 
\ar[r]^-{\mu^2 \otimes 1 + Id \otimes ev}
&
hom (L,L_0)\otimes L 
\xrightarrow{ev}
L}
\end{equation}
which we recognise as $T^3_L L_0$. (As $\mu^2(\e, \e)=0$, the $Id \otimes \mu^2 \otimes 1$ term has vanished. See also remark \ref{rmk:weakerhypothesis}.)
\vspace{1pc}

\noindent
For a general $n$, proceed similarly:
\begin{itemize}
\item First quotient out $hom(L,L_0) \otimes 1 \otimes L$ and its image under $\delta$, the diagonal copy of $hom(L,L_0) \otimes L$; the new complex has one copy of $L_0$, one of $hom(L,L_0)\otimes L$, two of $hom(L,L_0) \otimes \e \otimes L$, and terms with higher tensor length.
\item Then quotient out $hom(L,L_0) \otimes \e \otimes 1 \otimes L$ and its image under $\delta$, which is the diagonal copy of $hom(L,L_0) \otimes \e \otimes L$; the new complex agrees with $T^n_L L_0$ for terms containing up to two tensor signs, has two copies of $hom(L,L_0) \otimes \e \otimes \e \otimes L$, and terms with a higher number of tensor signs.
\item \ldots
\item At the $r^{th}$ stage, we quotient out $hom(L, L_0) \otimes \overbrace{\e \otimes \ldots \otimes \e}^{r-1} \otimes 1 \otimes L$, and its image under $\d$. Notice that until this point, $hom(L, L_0) \otimes \overbrace{\e \otimes \ldots \otimes \e}^{r-1} \otimes 1 \otimes L$ was unaffected by the quotienting process.
\item \ldots
\item Finally, quotient out $hom(L,L_0) \otimes \overbrace{ \e \otimes \ldots \e}^{n-2} \otimes 1 \otimes L$ and its image under $\delta$, the diagonal copy of  $hom(L,L_0) \otimes \overbrace{ \e \otimes \ldots \e}^{n-2} \otimes L$. This gives the complex $T^n_L L_0$.
\end{itemize}
This completes the proof of Proposition \ref{th:bartwistedcomplex}.

\begin{remark}\label{rmk:weakerhypothesis}
Suppose that we started with weaker hypothesis: instead of having $hom(L,L) = A$, we only had $hom(L,L) = B$, where $B = \Z_2(\e)/(\e^2)$ is a minimal, strictly unital $A_\infty$ algebra with $\mu^2(1, \e) = \mu(\e, 1) = \e$, but some of the higher compositions might be non-zero. (The other assumptions on $\A$ are unchanged.) Then one could check that $\tau^n_L L_0$ is quasi-isomorphic to 
\bq L_0 \; \oplus \; hom(L,L_0)\otimes L \; \oplus \; hom(L,L_0)  \otimes  \e\otimes L \; \oplus \ldots \oplus \; hom(L,L_0)\otimes \overbrace{\e \ldots \e}^{n-1} \otimes L
\eq
with connection acting on the $r^{th}$ summand by 
\bq
Id^{\otimes r-2} \otimes ev \; \oplus \; \sum_{r = i+j+k, \, j>1} Id^{\otimes i} \otimes \mu^j_\A \otimes Id^{\otimes k-1} \otimes 1.
\eq
\end{remark}

%---------------------------------------------------------------------------------------
%---------------------------------------------------------------------------------------
% RANK INEQUALITIES

\section{Lower bounds on $hf(\tau_L^n L_0, L_1)$}\label{sec:lowerbounds}

% subsection: T^n_L L_0 as a cone, and truncated twisted bar complex.

\subsection{$T^n_L L_0$ as a cone}\label{sec:T^n_LL_0asacone}
The twisted complex $T^n_L L_0$ of Proposition \ref{th:bartwistedcomplex} can itself be viewed as a cone, as follows.

\begin{definition}
 Let $C^n_L L_0$ be the twisted complex
\begin{multline}
hom(L,L_0)\otimes L 
 \, \oplus \, hom(L,L_0)\otimes \e \otimes L 
\, \oplus \, hom(L,L_0)\otimes \e \otimes \e \otimes L 
\, \oplus \, \ldots \\
\, \oplus \, hom (L,L_0) \otimes \e \otimes \ldots \otimes \e \otimes L 
\end{multline}
with connection \bq Id^{k-2} \otimes ev + \sum_{k-1 = i+j, \, i>1} \big(\mu^i \otimes Id^j \big) \otimes 1 
\eq
on the $k^{th}$ summand, $k>0$. (This is a quotient of $T^n_L L_0$.)
\end{definition}
There is a map of twisted complexes $C^n_L L_0 \to L_0$ given by $ev: hom(L,L_0) \otimes L \to L_0$  on the first summand, and zero elsewhere; by construction, $T^n_L L_0$ is the cone of this map. 

\begin{lem}\label{th:homstructureforC_n}
Suppose $L_1 \in \text{Ob}{\A} \subset \text{Ob} Tw(\A)$. Then
\begin{multline}
hom(L_1, C^n_L L_0) = 
hom(L,L_0)\otimes hom(L_1, L) 
 \, \oplus \, hom(L,L_0)\otimes \e \otimes hom(L_1,  L) \\
\, \oplus \, hom(L,L_0)\otimes \e \otimes \e \otimes hom(L_1, L) 
\, \oplus \, \ldots
\, \oplus \, hom (L,L_0) \otimes \e \otimes \ldots \otimes \e \otimes hom(L_1, L) 
\end{multline}
and $\mu^1_{Tw\A}$ acts on the $k^{th}$ summand by
\bq
\sum_{i+j = k+1, \,i, j >0} \big(\mu^i \otimes Id^{j}  + Id^{i} \otimes \mu^j \big)
\eq
\end{lem}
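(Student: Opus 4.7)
The plan is direct computation, structurally parallel to the proof of Lemma \ref{th:mu_1structureonhom}.

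First, the decomposition of $hom(L_1, C^n_L L_0)$ as a vector space is immediate from the $\Sigma\A$ formalism. For $L_1 \in \text{Ob}\A$ and any object $V \otimes L \in \text{Ob}\Sigma\A$, one has $hom_{\Sigma\A}(L_1, V \otimes L) = V \otimes hom_\A(L_1, L)$; applying this to each summand of $C^n_L L_0$ yields the stated direct sum.

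Next I would compute $\mu^1_{Tw\A}$ on an element $c = a \otimes \e^{\otimes(k-1)} \otimes b$ of the $k^{th}$ summand by expanding
\bq
\mu^1_{Tw\A}(c) = \sum_{p \geq 0} \mu^{1+p}_{\Sigma\A}(\delta, \ldots, \delta, c)
\eq
where $\delta$ is the connection on $C^n_L L_0$. Minimality of $\A$ rules out the $p=0$ contribution. For $p \geq 1$, each $\delta$-insertion is either an $ev$-component $Id^{\bullet} \otimes ev$ (with $\A$-part $\e \in hom_\A(L,L)$) or a $\mu$-component $(\mu^s \otimes Id^t) \otimes 1$ (with $\A$-part the strict unit $1_L$). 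I would enumerate the contributing chains in three claims:

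\emph{Claim 1 (pure $ev$-chains).} A composition of $q \geq 1$ consecutive $ev$-components applied to $c$ collapses, by iterated use of Lemma \ref{th:evmaps}, to $a \otimes \e^{\otimes(k-1-q)} \otimes \mu^{q+1}_\A(\e, \ldots, \e, b)$. This is exactly the $Id^{k-q} \otimes \mu^{q+1}$ summand in the claimed formula (the $(i,j) = (k-q, q+1)$ piece).

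\emph{Claim 2 (single $\mu$-component).} The $p=1$ contribution $\mu^2_{\Sigma\A}((\mu^s \otimes Id^t) \otimes 1, c)$ with $s+t$ equal to the number of non-$L$ tensor factors of the source and $s > 1$ evaluates to $\mu^s_\A(a, \e, \ldots, \e) \otimes \e^{\otimes t} \otimes b$, producing the $\mu^s \otimes Id^{t+1}$ term.

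\emph{Claim 3 (mixed chains vanish).} Any chain of length $p \geq 2$ that combines $\mu$-components with $ev$-components, or that contains two or more $\mu$-components, contributes $\mu^{p+1}_\A$ applied to an $\A$-tuple that includes the strict unit $1_L$ in an interior position. By strict unitality of $\A$, such higher products vanish, so these chains do not contribute. This is the crux of the argument and the main obstacle; it parallels the cancellation seen at the third step of the computation in the proof of Lemma \ref{th:mu_1structureonhom}.

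Combining the three claims, and using minimality ($\mu^1_\A = 0$) to drop the boundary $i=1$ and $j=1$ cases, yields
\bq
\mu^1_{Tw\A}(c) = \sum_{i+j=k+1,\; i,j>0} \bigl(\mu^i \otimes Id^j + Id^i \otimes \mu^j\bigr)(c),
\eq
as required. The only delicate point is Claim 3; the rest is routine bookkeeping analogous to the $k=2$ (i.e.~$r=3$) case already handled in Lemma \ref{th:mu_1structureonhom}.
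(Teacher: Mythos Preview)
Your proposal is correct and takes essentially the same approach as the paper, which simply refers back to the computation in Lemma~\ref{th:mu_1structureonhom}. Your three-claim decomposition (pure $ev$-chains via Lemma~\ref{th:evmaps}, single $\mu$-component via strict unitality of $\mu^2(1,\,\cdot\,)$, and vanishing of mixed chains by strict unitality of higher $\mu^d$) is exactly the content of that earlier computation, spelled out in greater detail than the paper itself provides.
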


\begin{proof}
This is essentially the same computation as for Lemma \ref{th:mu_1structureonhom}.
\end{proof}
Notice this is the reduced bar complex for the tensor $hom(L, L_0) \otimes_{A} hom(L_1, L)$, truncated (see e.g.~\cite[Section 2.3.3]{Hasegawa}).

% subsection: quasi-isomorphic truncated bar complexes

\subsection{Quasi-isomorphic truncated bar complexes}

Let $M$ be a right $A$-module, and $N$ a left $A$-module; assume moreover that they are both finite and strictly unital. We will use the following notation for the truncated reduced bar complex of their tensor product
\bq (M \otimes_A N)_n := M \otimes N  \, \oplus \,
M \otimes \e \otimes N \, \oplus \,
\ldots \, \oplus \,
M \otimes \underbrace{\e \otimes \ldots \otimes \e}_{n-1} \otimes N
\eq
with the natural differential induced by the module structures on $M$ and $N$: on the $r^{th}$ summand, it is
\bq
\sum_{r-1=i+j, \, i, j >0 }\big( \mu^i \otimes Id^{\otimes j} + Id^{\otimes j} \otimes \mu^i   \big)
\eq
($\mu^1_{Tw{\A}}$ on $hom(L_1, C^n_L L_0)$, in Lemma \ref{th:homstructureforC_n}, is an example of this.)
Let $H(M\otimes_AN)_n$ denote the cohomology of this chain complex. We shall use the following:

\begin{lem}\label{th:changeqiclass} Let $M'$ (resp. $N'$) be a finite, stricly unital right (resp. left) $A$-module.
Suppose that $M'$ is quasi-isomorphic to $M$, and $N'$ quasi-isomorphic to $N$. Then there is a quasi-isomorphism 
\bq h:(M'\otimes_A N')_n \to (M\otimes_A N)_n
\eq
\end{lem}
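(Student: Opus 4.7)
The plan is to build $h$ explicitly out of the component maps of the quasi-isomorphisms $f\colon M' \to M$ and $g\colon N' \to N$, and then show it is a quasi-isomorphism by induction on the truncation depth $n$. For the explicit formula, I would set, on the summand $M' \otimes \e^{\otimes(r-1)} \otimes N'$,
\[
h(m' \otimes \e^{\otimes(r-1)} \otimes n') \;=\; \sum_{i+\ell+j\,=\,r-1} f_{i+1}(m',\e^{\otimes i}) \,\otimes\, \e^{\otimes \ell} \,\otimes\, g_{j+1}(\e^{\otimes j}, n'),
\]
where the summand with $\ell$ middle $\e$'s lies in the $(\ell+1)$-st summand of $(M\otimes_A N)_n$; since $\ell \leq r-1 \leq n-1$, this is safely within the truncation. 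Strict unitality means we never need to plug $1 \in A$ into $f_{i+1}$ or $g_{j+1}$, so the formula above captures everything.

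Verifying that $h$ is a chain map reduces to expanding the $A_\infty$-module-morphism equations \eqref{eq:relationformorphism} for $f$ and $g$ with only $\e$'s as algebra inputs, and matching the result against the bar differential. The essential simplification comes from the triviality of $A$: $\mu^2_A(\e,\e)=0$ and all higher $\mu^k_A$ vanish, so the only algebra-internal composition that contributes is $\mu^2_A(1,\e)=\mu^2_A(\e,1)=\e$, which is absorbed by strict unitality. This collapses the morphism equations into precisely the identity required for $h\circ d=d\circ h$. This step is bookkeeping but must be executed carefully.

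To show $h$ is a quasi-isomorphism, I would induct on $n$. The base case $n=1$ gives $(M\otimes_A N)_1 = M\otimes N$ with differential $\mu^1_M\otimes \mathrm{Id} + \mathrm{Id}\otimes \mu^1_N$, and $h$ reduces to $f_1\otimes g_1$, a quasi-isomorphism by K\"unneth over $\Z_2$ (since $f_1$ and $g_1$ are quasi-isomorphisms of the underlying chain complexes by hypothesis). For the inductive step, I note that the bar differential only decreases or preserves tensor length (higher $\mu^i$'s strictly decrease it, while $\mu^1_M$ and $\mu^1_N$ preserve it), so $(M\otimes_A N)_{n-1}\hookrightarrow (M\otimes_A N)_n$ is a subcomplex with one-term quotient $M\otimes \e^{\otimes(n-1)}\otimes N$, equipped with the K\"unneth differential $\mu^1_M\otimes \mathrm{Id} + \mathrm{Id}\otimes\mu^1_N$. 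The map $h$ respects this short exact sequence and induces $f_1\otimes \mathrm{Id}^{\otimes(n-1)}\otimes g_1$ on the quotient, another K\"unneth quasi-isomorphism. Combining the inductive hypothesis on the subcomplex with this quasi-isomorphism on the quotient, the five-lemma applied to the associated long exact sequences of cohomology yields the conclusion.

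The main obstacle is keeping track of the tensor-length filtration correctly: one must verify that when we pass to the quotient piece $M\otimes \e^{\otimes(n-1)}\otimes N$, the bar differential's higher-$\mu$ contributions all fall into the subcomplex and are thus killed, leaving only $\mu^1_M\otimes \mathrm{Id}+\mathrm{Id}\otimes\mu^1_N$. Once that is in place, the argument is formal.
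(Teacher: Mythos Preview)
Your proposal is correct and essentially the same as the paper's proof. The only cosmetic difference is that the paper factors the map as a composite $\tilde g \circ \tilde f$, first replacing $M'$ by $M$ via $\tilde f:(M'\otimes_A N')_n\to(M\otimes_A N')_n$ built from the $f_r$, then replacing $N'$ by $N$ via the analogous $\tilde g$; your single formula for $h$ is exactly this composite written out. Both arguments then use the tensor-length filtration and the five-lemma in the same way (the paper phrases it as ``quasi-isomorphism on associated gradeds,'' you phrase it as induction on $n$ with a subcomplex/quotient short exact sequence). One small point: the paper explicitly invokes \cite[Theorem 3.2.2.1]{Hasegawa} to justify taking $f$ and $g$ strictly unital, which you should also cite rather than assume.
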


\begin{proof}
Suppose $f: M' \to M$ is a quasi-isomorphism of left $A$-modules, given by a collection of maps
\bq f_r: M'\otimes A^{\otimes(r-1)} \to M
\eq
for all $r\geq 1$. By Lef\`evre-Hasegawa (\cite[Theorem 3.2.2.1]{Hasegawa}), we may assume $f$ is a strictly unital map. Let $\tilde{f}: (M'\otimes_A N')_n \to (M \otimes_A N')_n$ be the `obvious' map constructed using all of the $f_r$: $\tilde{f}$ acts on 
\bq M' \otimes \underbrace{\e \otimes \e \ldots \otimes \e}_{k} \otimes N '
\eq
by
\bq \tilde{f}(a \otimes \e \otimes \ldots \otimes \e \otimes b) = \sum_{i = 1}^{k+1} f_i (a, \e, \ldots, \e) \otimes \underbrace{\e \otimes \ldots \otimes \e}_{k-i+1} \otimes  b.
\eq
$\tilde{f}$ is a chain map (this follows from the equations satisfied by $f$ as an $A$-module morphism).
Similarly, starting with a strictly unital quasi-isomorphism 
\bq g_r: A^{\otimes (r-1)} \otimes N' \to N
\eq
contruct a chain map
\bq \tilde{g}: (M \otimes_A N')_n \to (M \otimes_AN)_n 
\eq
Filter each of the complexes by length of tensor product, for instance:
\bq (M \otimes_A N)_n \supset (M \otimes_A N)_{n-1} \supset \ldots
\supset (M \otimes_A N)_1=M \otimes N \supset 0.
\eq
$\tilde{f}$ respects the filtrations;  it therefore induces a sequence of maps on the associated gradeds, that, by construction, are quasi-isomorphisms; thus, for instance by an inductive application of the 5-lemma, $\tilde{f}$ is a quasi-isomorphism. Similarly, $\tilde{g}$ is a quasi-isomorphism.
\end{proof}

% subsection: rank inequalities

\subsection{Rank Inequalities}

We are now ready to prove the weak analogue of the inequalities used by Ishida. 

\begin{prop}\label{th:rankmasterinequalities}
For any two Lagrangians $L_0$ and $L_1$, and any Lagrangian sphere $L$, and all integers $n \neq 0$
\bq
hf(\tau^n_L (L_0), L_1) + hf(L_0, L_1) \geq hf (L, L_1) \cdot hf (L_0, L).
\eq
Further, if $L \ncong L_0, L_1$ in the Fukaya category, and $|n| \geq 2$, 
\bq
hf(\tau^n_L (L_0), L_1) + hf(L_0, L_1) \geq 2hf (L, L_1) \cdot hf (L_0, L).
\eq
\end{prop}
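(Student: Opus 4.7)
The strategy is to combine inequality \eqref{eq:truncatedbarcomplexinequality} with a careful dimension count on the truncated reduced bar complex, using the classification of $A_\infty$-modules over $A$. By \eqref{eq:truncatedbarcomplexinequality}, for $n>0$ it suffices to show
\[
\dim H\bigl((hom(L, L_0) \otimes_A hom(L_1, L))_n\bigr) \;\geq\; hf(L, L_1)\cdot hf(L_0, L),
\]
with an additional factor of $2$ under the hypothesis of \eqref{eq:ineq2}. The case $n<0$ will reduce to $n>0$ by the identification $HF(\tau_L^{-n} L_0, L_1) \cong HF(L_0, \tau_L^n L_1)$ together with the symmetric roles of $L_0$ and $L_1$ in the bar complex expression.

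Set $M = hom(L, L_0)$ and $N = hom(L_1, L)$, viewed as finite-dimensional strictly unital right and left $A_\infty$-modules over $A$. Proposition \ref{th:classificationofmodules} gives quasi-isomorphic decompositions $M \simeq (\Z_2)^{\oplus a} \oplus \bigoplus_i R_{k_i}$ and $N \simeq (\Z_2)^{\oplus a'} \oplus \bigoplus_j \mathcal{L}_{l_j}$; after dropping the acyclic $R_1$ and $\mathcal{L}_1$ summands we may assume all $k_i, l_j \geq 2$, in which case $hf(L_0, L) = a + 2b$ and $hf(L, L_1) = a' + 2b'$, where $b$ and $b'$ count the twisted summands. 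By Lemma \ref{th:changeqiclass}, we may use these decompositions when computing the bar complex, so
\[
\dim H\bigl((M \otimes_A N)_n\bigr) \;=\; \sum_{i,j} \dim H\bigl((\text{summand}_i \otimes_A \text{summand}_j)_n\bigr).
\]

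The bulk of the proof is thus reduced to establishing the following per-summand bounds, valid for $n \geq 1$ and $k, l \geq 2$:
\[
\dim H((\Z_2 \otimes_A \Z_2)_n) \geq 1,\quad \dim H((\Z_2 \otimes_A \mathcal{L}_l)_n),\; \dim H((R_k \otimes_A \Z_2)_n) \geq 2,\quad \dim H((R_k \otimes_A \mathcal{L}_l)_n) \geq 4.
\]
Summing the contributions yields $aa' + 2ab' + 2a'b + 4bb' = (a+2b)(a'+2b') = hf(L_0,L)\cdot hf(L,L_1)$, which is \eqref{eq:ineq1}. The first three bounds are straightforward: only one higher $A_\infty$-operation contributes to the differential in each case, and an explicit kernel/image count suffices. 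The main obstacle is the last bound, where two distinct higher operations contribute and kernel elements can mix non-trivially across length components. My plan is to exhibit four surviving cohomology classes: the ``bottom-bottom'' class $[r_k^0 \otimes l_l^0]$ at length $1$, which is a cocycle (no higher operation fits) and is never a coboundary (every image of $d$ has a superscript $1$ somewhere); the ``top-top'' class $[r_k^1 \otimes \e^{n-1} \otimes l_l^1]$ at length $n$, which is trivially a cocycle and cannot be a coboundary as nothing exceeds length $n$; and two ``mixed'' classes whose construction requires case-splitting on the relative sizes of $k$, $l$, and $n$, together with careful bookkeeping of the coboundary relations produced by elements such as $d(r_k^0 \otimes \e^{k-1} \otimes l_l^0)$.

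For the stronger inequality \eqref{eq:ineq2}, Corollary \ref{th:multiplybyenontrivialimage} combined with the hypothesis $L \ncong L_0, L_1$ forces the multiplication-by-$\e$ map on $HF(L, L_0)$ and $HF(L, L_1)$ to have trivial image. Since $R_2$ (respectively $\mathcal{L}_2$) is the unique summand type whose cohomology carries a non-trivial $\e$-action, this rules out all $R_2$ and $\mathcal{L}_2$ summands, so $k_i, l_j \geq 3$ throughout. Under these stronger hypotheses and for $|n| \geq 2$, each per-summand bound doubles — for instance $\dim H((\Z_2 \otimes_A \Z_2)_n) \geq n \geq 2$ and $\dim H((R_k \otimes_A \mathcal{L}_l)_n) \geq 8$ when $k, l \geq 3$ — essentially by the same constructions as before, yielding the factor of $2$.
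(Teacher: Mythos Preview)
Your proposal is correct and follows essentially the same route as the paper: reduce to $n>0$, invoke \eqref{eq:truncatedbarcomplexinequality}, replace $hom(L,L_0)$ and $hom(L_1,L)$ by their classified decompositions via Lemma \ref{th:changeqiclass}, and then establish per-summand lower bounds on the cohomology of the truncated bar complex, with Corollary \ref{th:multiplybyenontrivialimage} forcing $k,l\geq 3$ for the stronger inequality. Your explicit bookkeeping of the sum $(a+2b)(a'+2b')$ is a nice touch the paper leaves implicit; conversely, the paper spells out the case split for $(R_j\otimes_A\mathcal{L}_k)_n$ into $n\geq k$, $j\leq n<k$, and $n<j$ with explicit cocycles in each, which you should do as well since the two ``mixed'' classes genuinely change form across these regimes.
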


\begin{proof}
First note that
\bq
hf(\tau^n_L L_0, L_1) = hf (L_0, \tau^{-n}_L L_1) = hf (\tau^{-n}_L L_1, L_0)
\eq
so it is enough to prove the claims for $n>0$.
From our description of $T^n_L L_0$ as a cone (section \ref{sec:T^n_LL_0asacone}), we know we have an exact sequence
\begin{equation} 
\ldots \to
    {Hom_{Tw\A}( C^n_L L_0, L_1)} \to {Hom_{Tw\A}(L_0, L_1)} \to
  {Hom_{Tw\A}(\tau^n_L L_0, L_1)} \to \ldots
\end{equation}
Taking ranks gives
\bq hf (\tau^n_L L_0, L_1) + hf(L_0,L_1) \geq rk \big( Hom_{Tw\A}(C^n_L L_0, L_1) \big)
\eq
so it is enough to show that 
\bq rk \big( Hom_{Tw\A}(C^n_L L_0, L_1) \big) \geq hf(L,L_1) \cdot hf(L_0,L_1).
\eq
By Lemma \ref{th:homstructureforC_n},
\bq
 rk \big( Hom_{Tw\A}(C^n_L L_0, L_1) \big) = rk \big( H(hom(L,L_0)\otimes_A hom(L_1, L))_n \big).
\eq
By Lemma \ref{th:changeqiclass}, we are free to use any representatives of the quasi-isomorphism classes of $hom(L,L_0)$ and $hom(L_1,L)$.
Replace them by modules of the form described by Proposition \ref{th:classificationofmodules}, say $M$ and $N$. For simplicity, assume $M$ and $N$ are minimal.
The chain complex $(M\otimes_AN)_n$ decomposes as a direct sum of complexes of the form
\begin{itemize}
\item $(\Z_2 \otimes_A \Z_2)_n$ \qquad (a)
\item $(R_k \otimes_A \Z_2)_n$ \qquad (b)
\item $(\Z_2 \otimes_A \Lk)_n$ \qquad (b')
\item $(R_j \otimes_A \Lk)_n$ \qquad (c)
\end{itemize}
obtained from all the possible pairings of a summand of $M$ and a summand of $N$.
We want to show that 
\bq rk \; H((M \otimes_A N)_n) \geq rk\;M \times rk\; N
\eq
It is enough to prove this for each of the direct summands itemized above. 
\newline

\noindent
{\bf (a)} The differential on $(\Z_2 \otimes_A \Z_2)_n$ is zero, so its cohomology has rank $n$.
\newline

\noindent
{\bf(b \& b')} The cases $(R_k \otimes_A \Z_2)_n$ and $(\Z_2 \otimes_A \Lk)_n$ are clearly symmetric; without loss of generality, let us consider the first one. Let $u$ be the generator of $\Z_2$.

The elements $r_k^0 \otimes u$ and $r_k^1 \otimes \underbrace{\e \otimes \ldots \otimes \e}_{n-1} \otimes u$ survive passing to cohomology and give distinct classes. 
Suppose additionally that $n \geq 2$ and $L \ncong L_0, L_1$ in the Fukaya category. By Corollary \ref{th:multiplybyenontrivialimage}, we must have $k\geq 3$. In this case, the generators $r_k^0 \otimes \e \otimes u$ and $r_k^1 \otimes \underbrace{\e \otimes \ldots \otimes \e}_{n-2} \otimes u$ also survive; the four classes are linearly independent.
\newline

\noindent
{\bf(c)} We want to show that the cohomology of $(R_j \otimes_A \Lk)_n$ has rank at least 4. Without loss of generality, $j \leq k$. 

$r_j^0\otimes l_k^0$ and $r_j^1 \otimes \underbrace{\e \otimes \ldots \otimes \e}_{n-1} \otimes l_k^1$ give two classes. 

\begin{itemize}
\item If $n\geq k$, $$d(r_j^1 \otimes \underbrace{\e \ldots \e}_{n-1} \otimes l^0_k)=d(r_j^0 \otimes \underbrace{\e \ldots \e}_{n-1-k+j} \otimes l_k^1) = r_j^1 \otimes \underbrace{\e \ldots \e}_{n-k} \otimes l_k^1.$$
Thus $r_j^1 \otimes \e \ldots \e \otimes l^0_k + r_j^0 \otimes \e \ldots \e \otimes l_k^1$, the sum of the elements above, is in the kernel; it gives a non-zero cohomology class. Also,

\bq
d(r_j^0 \otimes \underbrace{\e \ldots \e}_{k-1} \otimes l_k^0) = r_j^0 \otimes l_k^1 + r_j^1 \otimes \underbrace{\e \ldots \e}_{k-j} \otimes l_k^0
\eq
both summands of which lie in the kernel of $d$. Under $d$, they are only the image of $r_j^0 \otimes \e \ldots \e \otimes l_k^0$. This gives us the final class we wanted for the cohomology. See figure \ref{fig:j2k3n3} for the case $j=2$, $k=3$ and $n=3$.

\begin{figure}[htb]
\begin{center}
\includegraphics[height=1.6in, width=4.5in,angle=0]{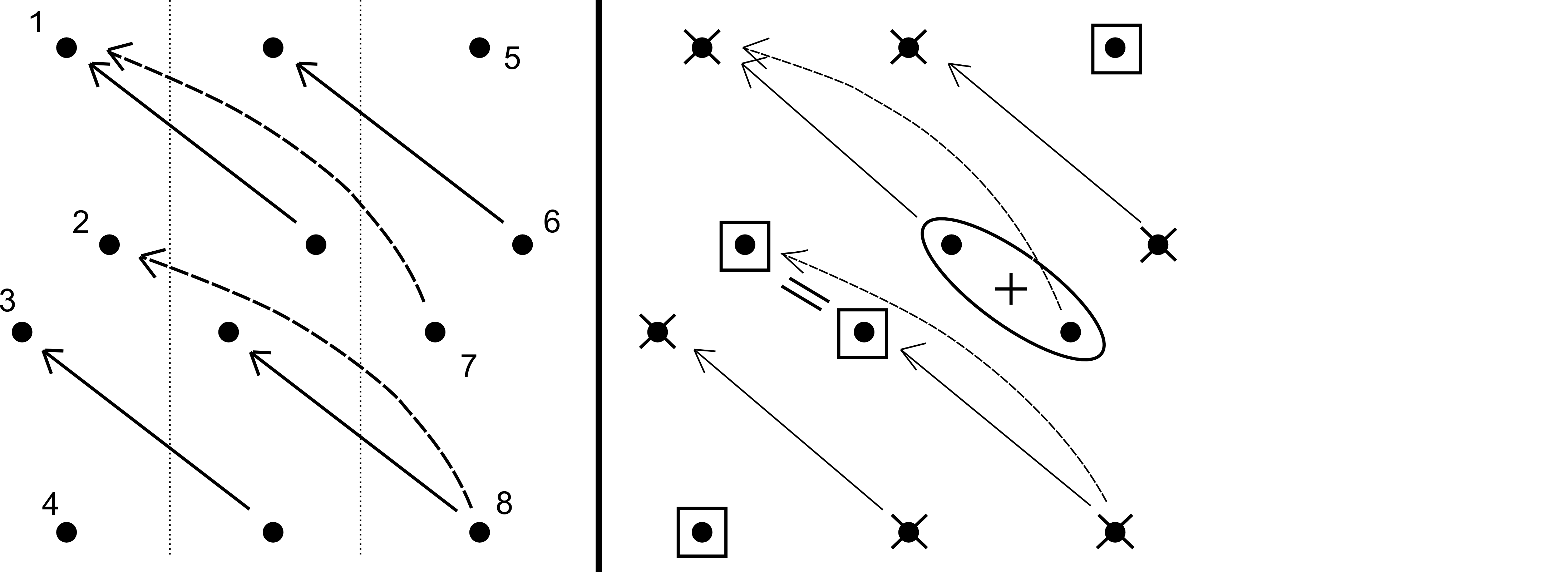}
\caption{ $(R_2 \otimes_A \mathcal{L}_3)_3$: generators for the complex, differentials, and generators for the cohomology. In both diagrams, each dot is a generator for $(R_2 \otimes_A \mathcal{L}_3)_3$; 
the $i^{th}$ column contains the generators corresponding to 
$R_3 \otimes \e \otimes \ldots \otimes \e \otimes \mathcal{L}_3$, with $i-1$ copies of $\e$. 
For instance, suppressing the subscripts 2 and 3: the dot labelled 1 is $r^1 \otimes l^1$, 2 is $r^0 \otimes l^1$, 3 is $r^1 \otimes l^0$, 4 is $r^0 \otimes l^0$; the dot labelled 5 corresponds to $r^1 \otimes \e \otimes \e  \otimes l^1$, and similarly for 6, 7 and 8. The arrows give the differential; the dashed ones come from $\mu^3( \e, \e, \cdot)$ on $\mathcal{L}_3$, and the full ones from $\mu^2(\cdot, \e)$ on $R_2$. The right-hand side diagram gives generators for the cohomology, which has dimension exactly four.
}
\label{fig:j2k3n3}
\end{center}
\end{figure}

\item If $j \leq n <k$, the classes $ r_j^0 \otimes l_k^1 $ and $r_j^1 \otimes \underbrace{\e \ldots \e}_{n-1}  \otimes  l_k^0 $ survive to cohomology. 

\item If $n < j$, the differential on $(R_j \otimes_A \Lk)_n$ is trivial, and the (co)homology has rank $4n$.

\end{itemize}

In all cases, two of the classes are at the `start' of the complex ($r_j^0\otimes l_k^0$ and $r_j^0 \otimes l_k^1$ or its equivalence class), and two at the `end' (e.g.  $r_j^1 \otimes \underbrace{\e \otimes \ldots \otimes \e}_{n-1} \otimes l_k^1$ and $r_j^1 \otimes \underbrace{\e \ldots \e}_{n-1} \otimes l_k^0$, or its equivalence class). 
Suppose additionally that $n \geq 2$ and, and neither $L_0$ nor $L_1$ is quasi-isomorphic to $L$ in the Fukaya category. By Corollary \ref{th:multiplybyenontrivialimage}, $k \geq j \geq 3$; it is then easy to check that there are actually at least 8 classes that survive. The strategy is similar to that in (b) and (b'), and we shall not go through each case in detail; the point is that you can add two `front'-like classes, shifted by an inserted $\e$, and two `back'-like classes, shifted by deleting one of the $(n-1)$ $\e$'s that appear in the expressions for them.
See figures ref{fig:j3k3n4} through \ref{fig:j3k4n5} for examples: the cases where $(j , k, n)$ is equal to $(3, 3, 4)$, $(3,4,4)$ and $(3,4,5)$.

\begin{figure}[htb]
\begin{center}
\includegraphics[height=1.6in, width=5in,angle=0]{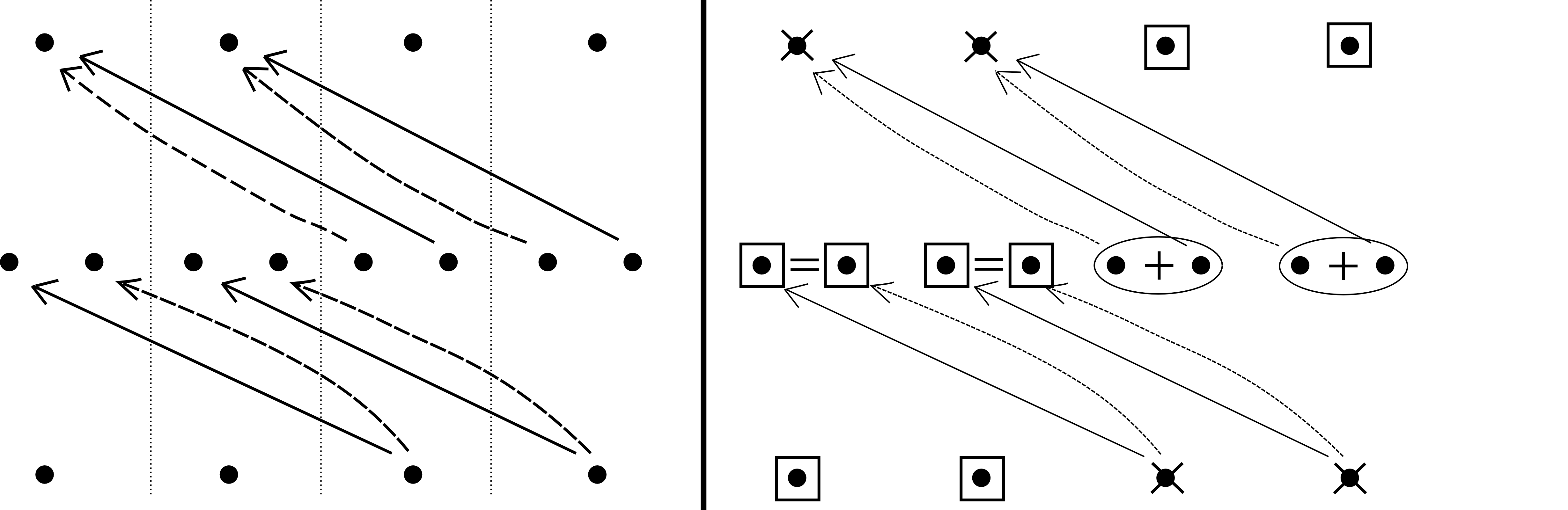}
\caption{ $(R_3 \otimes_A \mathcal{L}_3)_4$: generators for the complex, differentials, and generators for the cohomology. 
}
\label{fig:j3k4n5}
\end{center}
\end{figure}

\begin{figure}[htb]
\begin{center}
\includegraphics[height=1.6in, width=5in,angle=0]{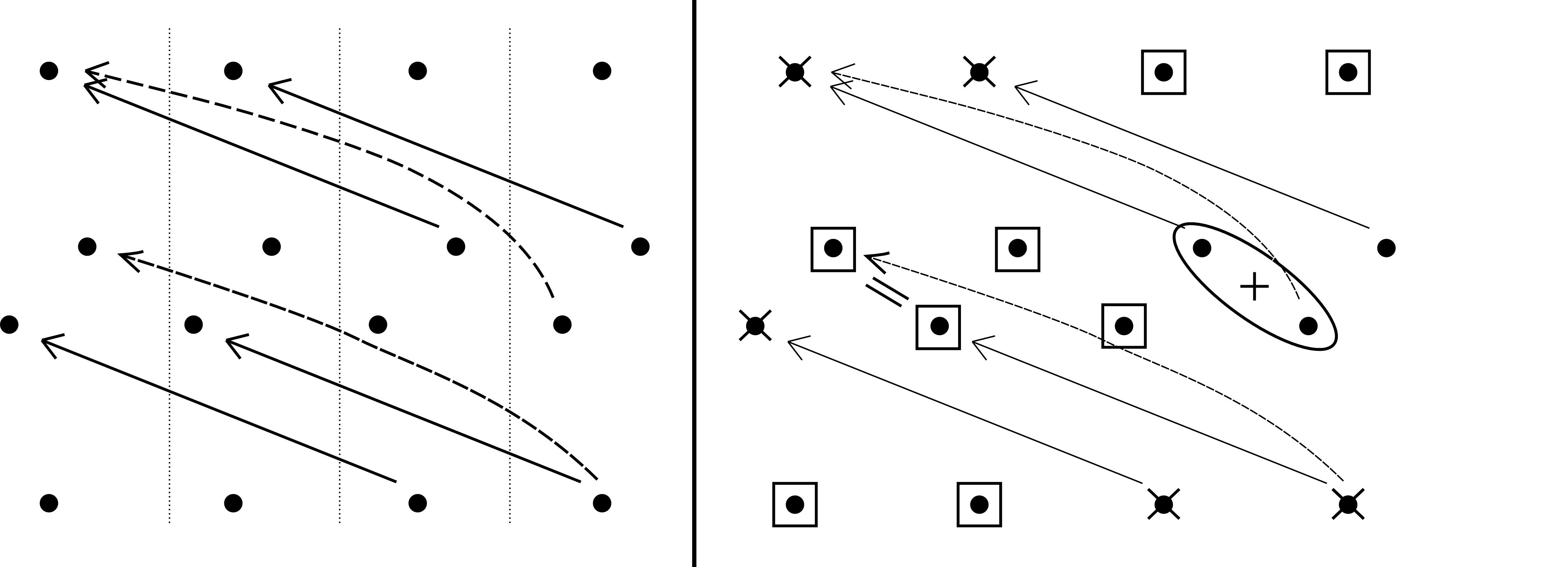}
\caption{$(R_3 \otimes_A \mathcal{L}_4)_4$: differentials, and generators for the cohomology.}
\label{fig:sequence5}
\end{center}
\end{figure}
\end{proof}

\begin{figure}[htb]
\begin{center}
\includegraphics[height=1.6in, width=6in,angle=0]{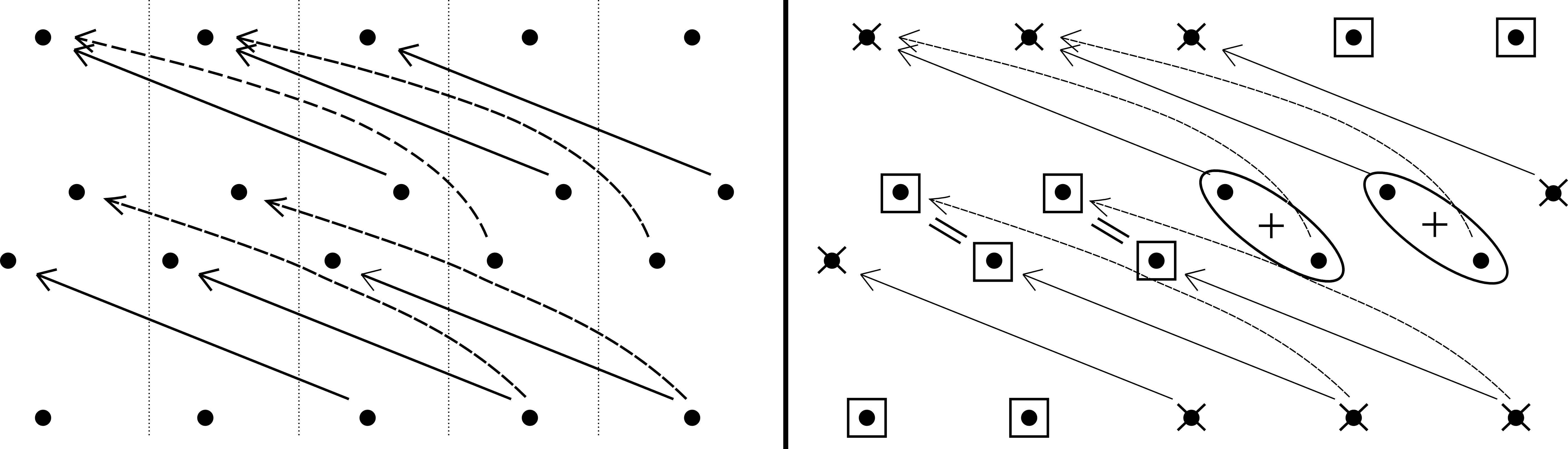}
\caption{ $(R_3 \otimes_A \mathcal{L}_4)_5$: generators for the complex, differentials, and generators for the cohomology. 
}
\label{fig:j3k4n5}
\end{center}
\end{figure}

\begin{remark}\label{rmk:weakerhypothesis2}
Suppose that we started with weaker hypothesis, as in Remark \ref{rmk:weakerhypothesis}; one could proceed analogously to above to get the inequality
\bq
hf (\tau^n_L, L_0, L_1) + hf(L_0,L_1) \geq rk H\big( (hom(L, L_0)\otimes_B hom(L_1, L))_n \big).
\eq
 However, we cannot in general obtain similar lower bounds for the right-hand side.
\end{remark}

%------------------------------------------------------------------------------------------------------
%------------------------------------------------------------------------------------------------------
% CONCLUSION OF ARGUMENT

\section{Conclusion of argument}\label{sec:conclusion}
 
From here on, the proof of Theorems \ref{th:freegroup} and \ref{th:freegrouponFukaya} closely follows Ishida's argument. Rank of Lagrangian Floer cohomology plays the role of intersection numbers, except we have to be careful whenever it is 2. To get the conclusion of Theorem \ref{th:freegroup}, we will be repeatedly using the fact that if a symplectomorphism $\phi$ of $M$ is symplectically isotopic to the identity $Id$, then for any Lagrangian sphere $L$, $\phi(L)$ is exact Lagrangian isotopic to $L$.

\begin{lem}\label{lem2.3}(see \cite[Lemma 2.3]{Ishida})
Let $L$, $L_0$ and $L_1$ be Lagrangians such that $L$ is a sphere, $L \ncong L_0$ in the Fukaya category, and $hf(L,L_0) \geq 2$. Then for all $n \neq 0$,
\bq
hf(L, L_1) > hf (L_0,L_1) \Rightarrow hf(L, \tau^n_L (L_1)) < hf(L_0, \tau^n_L(L_1))
\eq
\end{lem}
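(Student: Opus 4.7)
The plan is to recognize the statement as a direct arithmetic consequence of inequality \eqref{eq:ineq1} of Proposition \ref{th:rankmasterinequalities}, combined with the fact that $\tau_L$ preserves $L$.

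First I would simplify the left-hand side $hf(L, \tau^n_L(L_1))$. Because $\tau_L$ restricts to the antipodal map on $L$, it maps $L$ to itself as a Lagrangian, so $\tau^n_L(L) = L$ for all $n$. Invariance of Floer cohomology under symplectomorphisms, applied to $\tau^n_L$ acting on the pair $(L, L_1)$, then gives
\begin{equation}
hf(L, \tau^n_L(L_1)) \;=\; hf\bigl(\tau^n_L(L),\, \tau^n_L(L_1)\bigr) \;=\; hf(L, L_1),
\end{equation}
so the desired conclusion becomes $hf(L_0, \tau^n_L(L_1)) > hf(L, L_1)$.

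Next I would bound $hf(L_0, \tau^n_L(L_1))$ from below using the rank master inequality. Applying \eqref{eq:ineq1} with the twisted argument being $L_1$ rather than $L_0$ (i.e., taking the pair $(L_0, L_1)$ in the statement of \eqref{eq:ineq1} to be our $(L_1, L_0)$), and invoking the ungraded $\Z_2$ Floer duality $hf(A, B) = hf(B, A)$, yields
\begin{equation}
hf(L_0, \tau^n_L(L_1)) + hf(L_0, L_1) \;\geq\; hf(L_0, L) \cdot hf(L, L_1).
\end{equation}

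Finally I would feed in the hypotheses. The assumption $hf(L, L_0) \geq 2$ turns the above into
\begin{equation}
hf(L_0, \tau^n_L(L_1)) \;\geq\; 2\, hf(L, L_1) - hf(L_0, L_1),
\end{equation}
and the assumption $hf(L, L_1) > hf(L_0, L_1)$ forces the right-hand side to strictly exceed $hf(L, L_1)$. Combined with the first step, this gives $hf(L_0, \tau^n_L(L_1)) > hf(L, \tau^n_L(L_1))$, as required.

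I do not anticipate a serious obstacle here: the argument reduces to a short numerical manipulation once the rank master inequality is in hand, with the only care required being the relabelling of \eqref{eq:ineq1} and the invocation of Floer duality. Notably this chain uses only the weaker inequality \eqref{eq:ineq1} (valid for all $n \neq 0$) rather than the stronger \eqref{eq:ineq2} requiring $|n| \geq 2$ and $L \ncong L_0, L_1$; correspondingly, the hypothesis $L \ncong L_0$ does not itself enter into the inequality chain but is part of the natural setup under which the lemma will be iterated in section \ref{sec:conclusion}.
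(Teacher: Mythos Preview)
Your proof is correct and essentially identical to the paper's: the paper reformulates the conclusion by replacing $n$ with $-n$ (which implicitly uses the same invariance $\tau_L(L)=L$ you make explicit), then applies inequality \eqref{eq:ineq1} directly and performs the same numerical manipulation $2\,hf(L,L_1)-hf(L_0,L_1)>hf(L,L_1)$. Your observation that $L\ncong L_0$ is not actually used in the argument is also accurate; it matches the paper, where this hypothesis only becomes relevant when the lemma is iterated later in section~\ref{sec:conclusion}.
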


\begin{proof}
Replacing $n$ by $-n$, the statement is equivalent to
\bq
hf(L, L_1) > hf(L_0, L_1) \Rightarrow hf(L, L_1) < hf (\tau^n_L (L_0), L_1)
\eq
As $hf(L_0, L) \geq 2$, from Proposition \ref{th:rankmasterinequalities} we have
\bq
2 hf(L,L_1) - hf(L_0, L_1) \leq hf(\tau^n_L (L_0), L_1)
\eq
which, assuming $hf(L, L_1) - hf(L_0, L_1) > 0$, gives
\bq
hf(L, L_1) < hf (\tau^n_L (L_0), L_1)
\eq
as required.
\end{proof}
We are now ready to conclude the proofs. 
\subsubsection* {Case $hf(L, L') > 2$.}
As $hf(L,L') >  hf(L',L')$, Lemma \ref{lem2.3} immediately implies that we cannot have $\tau_L^n = 1$ for any $n \neq 0$. The same holds for $\tau_{L'}$.
Thus, if the group generated by $\tau_L$ and $\tau_{L'}$ is not free, there must exist $k \in \mathbb{N}$ and $a_i$, $b_i \in \Z^\ast$, $1\leq i \leq n$, such that
\bq
\tau_{L'}^{b_n} \tau_L^{a_n} \ldots \tau_{L'}^{b_1} \tau_L^{a_1} = 1.
\eq
Notice that by assumption, $hf(L', \tau_L^{a_1}L) > hf (L, \tau_L^{a_1}L) = 2.$ By Lemma \ref{lem2.3}, 
\bq
hf(L', \tau_{L'}^{b_1} \tau_L^{a_1}L) < hf(L, \tau_{L'}^{b_1} \tau_L^{a_1}L).
\eq
We may then use Lemma \ref{lem2.3} to see that 
\bq
hf(L', \tau_L^{a_2} \tau_{L'}^{b_1} \tau_L^{a_1}L) > hf(L, \tau_L^{a_2} \tau_{L'}^{b_1} \tau_L^{a_1} L).
\eq
Repeated iterations give
\bq
hf(L', \tau_{L'}^{b_k} \ldots  \tau_L^{a_1} L) < hf(L, \tau_{L'}^{b_k} \ldots \tau_L^{a_1} L)
\eq
where we recognise the left-hand side to be $hf(L',L)$ and the right-hand side to be $hf(L,L)$, a contradiction.

\subsubsection*{Case $hf(L,L') = 2$.}
The key will be:
\begin{cl}\label{th:claim}
For all $m \neq 0$, we have $hf(L', \tau_{L'}^m L) < hf(L, \tau_{L'}^m L)$.
\end{cl}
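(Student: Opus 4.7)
The strategy is to bound the two sides of the inequality separately. On the left-hand side, the key point is that the Dehn twist $\tau_{L'}$ is a symplectomorphism sending $L'$ to itself (setwise: a generalised Dehn twist restricts to the antipodal map on its vanishing sphere), so $\tau_{L'}^{-m}(L')$ is Hamiltonian isotopic to $L'$. Invariance of Floer cohomology under symplectomorphism then gives
\begin{equation*}
hf(L', \tau_{L'}^m L) \;=\; hf\bigl(\tau_{L'}^{-m}(L'),\, L\bigr) \;=\; hf(L', L) \;=\; 2.
\end{equation*}
The content of the claim is therefore the strict inequality $hf(L, \tau_{L'}^m L) > 2$ for every $m \neq 0$, which I would handle in two regimes.

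For $|m| \geq 2$, I would appeal directly to the stronger inequality \eqref{eq:ineq2} in Proposition \ref{th:rankmasterinequalities}. The standing hypothesis in the case $hf(L, L') = 2$ is precisely that $L \ncong L'$ in the Fukaya category, so setting $L_0 = L_1 = L$ in \eqref{eq:ineq2} yields
\begin{equation*}
hf(\tau_{L'}^m L,\, L) + hf(L, L) \;\geq\; 2\,hf(L', L)\cdot hf(L, L') \;=\; 8,
\end{equation*}
whence $hf(L, \tau_{L'}^m L) \geq 6$, well in excess of $2$.

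The real obstacle is the case $|m| = 1$, where \eqref{eq:ineq1} only delivers $hf(L, \tau_{L'} L) \geq 2$ with no strict information. Here I would work with Seidel's exact triangle directly: by Theorem \ref{th:Seidel}, $hom(L', L) \otimes L' \xrightarrow{ev} L \to \tau_{L'} L$ is distinguished, and applying $hom_{Tw\A}(L, -)$ produces a long exact sequence in which the connecting map identifies, via Lemma \ref{th:evmaps}, with the Floer product $\mu^2 : HF(L', L) \otimes HF(L, L') \to HF(L, L)$. Since there are no gradings, the exact sequence is $3$-periodic, and a routine rank count gives
\begin{equation*}
hf(L, \tau_{L'} L) \;=\; hf(L, L) + hf(L', L)\cdot hf(L, L') - 2\,\mathrm{rk}(\mu^2) \;=\; 6 - 2\,\mathrm{rk}(\mu^2).
\end{equation*}
For this quantity to equal $2$, the product $\mu^2$ would have to be surjective onto $HF(L, L)$; but by Corollary \ref{th:surjectivegiveiso} that would force $L \cong L'$ in the Fukaya category, contradicting the hypothesis. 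Hence $\mathrm{rk}(\mu^2) \leq 1$ and $hf(L, \tau_{L'} L) \geq 4$. The case $m = -1$ follows from $hf(L, \tau_{L'}^{-1} L) = hf(\tau_{L'} L, L) = hf(L, \tau_{L'} L) \geq 4$, using symplectic invariance together with the duality $hf(X, Y) = hf(Y, X)$. Combining the three cases proves the claim.
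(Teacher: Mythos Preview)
Your proof is correct and follows essentially the same line as the paper's. Both reduce the left-hand side to $hf(L',L)=2$ via $\tau_{L'}(L')=L'$, use the second inequality of Proposition~\ref{th:rankmasterinequalities} for $|m|\geq 2$, and for $|m|=1$ use Seidel's triangle together with Corollary~\ref{th:surjectivegiveiso} to rule out $hf(L,\tau_{L'}L)=2$. The only cosmetic difference is that you extract the explicit formula $hf(L,\tau_{L'}L)=6-2\,\mathrm{rk}(\mu^2)$ from the $3$-periodic sequence, whereas the paper argues by rank-counting that if $hf(L,\tau_{L'}^{-1}L)=2$ the long exact sequence must split short, forcing surjectivity of the product; these are the same observation.
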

Notice that this immediately implies that for all $m\neq 0$, $\tau_{L'}^m \neq 1$. Similarly, $\tau_L^m \neq 1$ for all such $m$. Moreover, the statement is equivalent to 
\bq
hf(L', \tau_{L'}^{b_1} \tau_L^{a_1}L) < hf (L, \tau_{L'}^{b_1} \tau_L^{a_1} L)
\eq
for all $a_1\in \Z$, $b_1 \in \Z^\ast$,
which is the second inequality we got in the iteration we conducted for the first case. 
We may then proceed in exactly the same way, repeatedly using Lemma \ref{lem2.3} to get that
\bq
hf(L', \tau_{L'}^{b_k} \ldots  \tau_L^{a_1} L) < hf(L, \tau_{L'}^{b_k} \ldots \tau_L^{a_1} L)
\eq
which is a contradiction.

Thus, to prove that the subgroup generated by $\tau_L$ and $\tau_L'$ is free, it remains only to prove the claim.
\begin{proof}{of claim \ref{th:claim}.}
We treat the cases $m=1$ and $m >1$ separately.
For $m>1$, the second part of Lemma \ref{lem2.3} gives 
\bq
hf(\tau_{L'}^m L, L) \geq 2 hf(L', L)\cdot hf(L,L') - hf(L,L)
\eq
where the right-hand side is equal to 6, so we are done.

For $m=1$, we are certainly done unless $hf(\tau_{L'}L,L)=2$. If this is the case, use the exact sequence:

\begin{equation} \ldots \to {HF(L,L)} \to {HF(L,\tau^{-1}_{L'}L)} \to
 {HF(L',L) \otimes HF(L,L')} \to \ldots
\end{equation}
Considering ranks, this must split as a short exact sequence
\bq
0 \to HF(L, \tau_{L'}^{-1}L) \to
HF(L, L') \otimes HF(L',L) \to
HF(L,L) \to 0.
\eq
In particular, multiplication $HF(L, L') \otimes HF(L',L) \to
HF(L,L)$ is surjective. By Corollary \ref{th:surjectivegiveiso}, $L$ and $L'$ are quasi-isomorphic in the Fukaya category, a contradiction.
\end{proof}

%------------------------------------------------------------------------------------------------------
%------------------------------------------------------------------------------------------------------
% EXAMPLES: MILNOR FIBRES

\section{Examples: some Milnor fibres}\label{sec:examples}

We first explain what we mean by a Milnor fibre of a hyperplane singularity, as a symplectic manifold, and establish some relationships between fibres of a fixed singularity (section \ref{sec:symplecticfibre}). Section \ref{sec:adjacency} discusses adjacency of singularities. We then collect material on Khovanov and Seidel's framework for studying fibres of $(A_m)$ singularities, and certain Lagrangian spheres in them (section \ref{sec:KhSeidel}). Subsection \ref{sec:actualexamples} presents the actual examples, constructed using this framework and adjacency arguments. 

\subsection{Milnor fibres as symplectic manifolds}\label{sec:symplecticfibre}

Let $f$ be a non-constant polynomial $\C^{n+1} \to \C$; suppose that $f(0) = 0$, and that $f$ has an isolated critical point at 0.  Assume that $n \geq 2$.

\begin{thm*}(Milnor \cite[Theorems 4.8 and 5.11]{MilnorSingularities})
Fix $f$ as above.
There exists $\e_f$ such that for all $\e < \e_f$,
the sphere $S_\e$ intersects  $ f^{-1}(0)$ transversely, and
 the space $S_\e \backslash f^{-1}(0)$ is a smooth fibre bundle over $S^1$, with projection mapping $$\phi(z) = f(z) / |f(z)|.$$ 
The class of the fibre, as a smooth manifold, is independent of $\e$. Moreover, given $\e$, there exists $\delta(\e)$ such that for all $c$ with $0 \neq |c| < \delta(\e)$, the complex hypersurface $f^{-1}(c)$ intersects $S_\e$ transversely, and $f^{-1}(c) \cap B_\e$, where $B_\e$ is the open $\e$-ball, is a smooth manifold which is diffeomorphic to the fibre $\phi^{-1}(\text{arg}(c))$.
\end{thm*}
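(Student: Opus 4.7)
The plan is to follow Milnor's original four-step strategy: (i) transversality of the sphere $S_\e$ with $V := f^{-1}(0)$ for all sufficiently small $\e$; (ii) the argument map $\phi$ is a submersion on $S_\e \setminus V$; (iii) independence of the fibre diffeomorphism type on $\e$; (iv) identifying $f^{-1}(c) \cap B_\e$ with $\phi^{-1}(\arg c)$. Throughout, the key analytic tool is the real-algebraic curve selection lemma, applied to semi-algebraic sets defined by the polynomial $f$ and its gradient.

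First, I would establish (i) by contradiction. If there were a sequence $\e_k \to 0$ for which $S_{\e_k}$ fails to be transverse to $V$, then the semi-algebraic set of points $z \in V \setminus \{0\}$ where $z$ is tangent to $S_{|z|}$ has $0$ in its closure, and the curve selection lemma supplies a real analytic arc $\gamma:[0,1) \to V$ with $\gamma(0)=0$ and $\gamma(t)$ tangent to $S_{|\gamma(t)|}$ for $t>0$. Then $\tfrac{d}{dt}|\gamma(t)|^2 = 2\,\mathrm{Re}\langle\gamma(t),\dot\gamma(t)\rangle = 0$, so $|\gamma|$ is constant, contradicting $\gamma(0)=0$. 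This produces the $\e_f$ in the statement.

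The hard step is (ii), and I expect this to be the main obstacle. One wants to show that for $\e<\e_f$ the map $\phi$ has no critical points on $S_\e \setminus V$, so that Ehresmann's theorem, applied after trimming a tubular neighbourhood of $V$ to get a proper submersion, delivers the bundle structure over $S^1$. A Lagrange multiplier computation identifies critical points of $\phi|_{S_\e \setminus V}$ with points $z$ where the complex vectors $z$ and $\overline{\nabla \log f(z)}$ are $\mathbb{R}$-proportional via an imaginary scalar. If such critical points accumulated at $V$, the curve selection lemma would yield a real analytic arc $\gamma(t)$ into this critical set with $\gamma(0)\in V$; the proportionality condition, rewritten as $\mathrm{Re}\,\tfrac{d}{dt}\log f(\gamma(t)) = 0$, forces $|f(\gamma(t))|$ constant, contradicting $f(\gamma(0))=0 \neq f(\gamma(t))$ for small $t>0$. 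This is essentially Milnor's Lemma 4.3.

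For (iii), given $\e<\e'<\e_f$, I would construct a vector field $v$ on $B_{\e_f}\setminus V$ that is tangent to the level sets of $\phi$ and has strictly positive radial component, using a partition of unity together with the submersion property from (ii). Integrating $v$ transports $\phi^{-1}(\theta)\cap S_\e$ diffeomorphically to $\phi^{-1}(\theta)\cap S_{\e'}$, fibrewise over $S^1$. Finally, for (iv), fix $\e<\e_f$. Compactness of $S_\e$ together with transversality from (i) makes $\{c : f^{-1}(c)\pitchfork S_\e\}$ open around $0$, yielding $\d(\e)$. For $0<|c|<\d(\e)$, I would construct a vector field $w$ on a neighbourhood of $V\cap \overline{B_\e}$ that preserves $\arg f$ (i.e.\ is tangent to $\phi^{-1}(\text{const})$), is outward-transverse to the tubes $\{|f|=\text{const}\}$, and is also outward-transverse to $\partial B_\e$; the existence of $w$ again rests on (ii), combined with a partition of unity. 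Flowing the tube piece $f^{-1}(c)\cap B_\e$ along $w$ until it reaches $S_\e$ yields the required diffeomorphism $f^{-1}(c)\cap B_\e \xrightarrow{\cong} \phi^{-1}(\arg c)$, completing the proof.
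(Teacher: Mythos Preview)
The paper does not prove this theorem at all; it is quoted verbatim as a result of Milnor, with a citation to \cite[Theorems~4.8 and 5.11]{MilnorSingularities}. So there is no ``paper's own proof'' to compare your proposal against. Your outline is the standard one from Milnor's book, and steps (i), (iii), (iv) are essentially correct as sketched.

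There is, however, a genuine oversimplification in your step~(ii). You correctly identify the Lagrange-multiplier condition for a critical point of $\phi|_{S_\e\setminus V}$ (the vectors $z$ and $\overline{\nabla\log f(z)}$ are proportional via a purely imaginary scalar), and you correctly apply curve selection to obtain a real-analytic arc $\gamma(t)\to 0$ in this critical locus. But your claim that the proportionality condition ``rewrites as $\mathrm{Re}\,\tfrac{d}{dt}\log f(\gamma(t))=0$'' is not justified: the condition constrains the \emph{position} vector $\gamma(t)$, not the \emph{velocity} $\dot\gamma(t)$, and the chain rule gives $\tfrac{d}{dt}\log f(\gamma(t))$ in terms of $\dot\gamma$. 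One does not get $|f(\gamma(t))|$ constant so cheaply. Milnor's actual argument (his Lemmas~4.3--4.4, resting on the growth Lemmas~3.1--3.3 for real-analytic arcs) is more delicate: he shows that near $0$, whenever $z$ and $\nabla\log f(z)$ are $\C$-linearly dependent, the scalar of proportionality has \emph{strictly positive real part}, which is incompatible with the purely imaginary multiplier required at a critical point. This requires comparing the leading orders of $\|\gamma(t)\|^2$ and $f(\gamma(t))$ along the arc. If you want a self-contained proof you will need to fill in this step; otherwise, citing Milnor (as the paper does) is the honest route.
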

Milnor \cite{MilnorSingularities} also shows that $f^{-1}(c) \cap B_\e$ is homotopy equivalent to a finite bouquet of half-dimensional spheres. The count of these is called the Milnor number of the singularity; there is an algebraic expression for it in terms of the partial derivatives of $f$. 

We may assume that $\delta(\e)$ increases monotonely for $\e \in (0, \e_f)$. Let $\d(\e_f)$ be their supremum. We say $c \in \C$ is admissible if $|c| < \delta(\e_f)$. For a given $c$, we define its admissible range to be the interval consisting of all $\e$ such that $c < \delta(\e)$. Note that as $c$ tends to zero, the lower end-point of the admissible range for $c$ does too.

Assume $c$ is admissible, and that $\e$ is in the admissible range of $c$. Define 
\bq
F_{c, \e} := f^{-1}(c) \cap \overline{B}_{\e}
\eq
where $\overline{B}_{\e}$ be the closed ball of radius $\e$. 
When it is clear which $\e$ we are using, we will sometimes denote this simply by $F_c$; if there are several polynomials involved, we will use the notation $F^f_c$ or $F^f_{c, \e}$. 
For any admissible $\e$, there exists $\e' > \e$ that is also admissible; if $p \in \partial F_{c, \e}$, our convention is that $T_p F_{c, \e}$ means $T_p F_{c, \e'}$. 
Let $\theta = i /4 \sum_{i = 0}^n (z_i d \bar{z}_i - \bar{z}_i dz_i)$ and $\Omega = d \theta$ be the usual forms on $\C^{n+1}$. 
Let $Z$ be the associated negative Liouville vector field: $i_Z \Omega = -\theta$, and $\phi_t$ its flow, where $t$ parametrizes time.

The restrictions of $\theta$ and $\Omega$, say $\theta_c$ and  $\omega_c$, give each $F_{c, \e}$ the structure of an exact symplectic manifold. We shall call any such manifold the`Milnor fibre of $f$'.
Let $Z_c$ be the negative Liouville vector field on $F_{c, \e}$. Notice $- Z_c$ is the gradient of $\sum_{i=0}^n |z_i|^2$ with respect to the usual K\"ahler metric; this points outwards along level-sets. Thus, as  $f^{-1}(c) \pitchfork S_{\e}$, $Z_c$ points inwards along the boundary of $F_{c, \e}$. 
We denote the negative Liouville flow on each fibre by $\phi^{\fib}_t$.

Following a well-established approach for exact symplectic manifolds with contact type boundary, we glue cylindrical ends to $F_{c, \e}$ to obtain a non-compact symplectic manifold $\overline{F}_{c, \e}$:
\bq
(\overline{F}_{c, \e}, \overline{\omega}_c, \overline{\theta}_c) = (F_{c, \e}, \omega_c, \theta_c) \cup (\partial F_{c, \e} \times \R^+ , d(e^t \theta_c |_{\partial F_{c, \e}}), e^t \theta_c |_{\partial F_{c, \e}})
\eq
where $t$ is the coordinate on $\R^+$, and the gluing is made using the negative Liouville flow $\phi^{fib}_t$.  
For a fixed $c$, this construction is independent of the choice of admissible $\e$; hereafter we will often suppress the subscript $\e$.

\begin{lemma}\label{th:changingc}
Suppose $c_1$ and $c_2$ are admissible for $f$. Then there is an exact symplectomorphism between  $\overline{F}_{c_1}$ and $\overline{F}_{c_2}$. 
More precisely, suppose that $\e$ is any constant in $(0, \e_f)$ that lies in the admissible range for both $c_1$ and $c_2$ (notice that we can always find such an $\e$). To any smooth path in $B_{\d(\e)} \backslash 0$, with end-points at $c_1$ and $c_2$, we associate a (non-canonical) exact symplectomorphism of the completed fibres, defined up to Hamiltonian isotopy.
\end{lemma}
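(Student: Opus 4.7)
The plan is to build the desired exact symplectomorphism by modified symplectic parallel transport along $\gamma$, then extend it across the cylindrical ends. Fix $\e$ in the admissible range of both $c_1$ and $c_2$, and set $W = f^{-1}(\gamma) \cap \overline{B}_\e$; admissibility ensures $f|_W$ is a proper submersion and that $f^{-1}(c) \pitchfork S_\e$ for every $c$ along $\gamma$, so $W$ is a smooth manifold with corners fibered by the $F_{c,\e}$. On the interior of each fiber the restriction of $\Omega$ is non-degenerate, so there is a symplectic connection whose horizontal distribution is the $\Omega$-orthogonal complement of the vertical. Symplectic parallel transport would preserve the symplectic form on fibers, but its horizontal lift is not tangent to $\partial W = S_\e \cap W$ (since $Z$ is outward-pointing on $S_\e$). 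This is the main obstacle.

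To circumvent it I would replace the symplectic connection by a smooth Ehresmann connection on $f|_W$ that is tangent to $\partial W$, obtained by blending the symplectic connection on an interior region with an arbitrary boundary-tangent connection via a cut-off. Parallel transport along $\gamma$ by this connection gives a diffeomorphism $\Phi_0 : F_{c_1,\e} \to F_{c_2,\e}$ that is generally not symplectic, but pulls back $\omega_{c_2}$ to a symplectic form on $F_{c_1,\e}$ cohomologous to $\omega_{c_1}$ (both are exact). A Moser argument applied to the resulting smooth one-parameter family of exact symplectic forms on $F_{c_1,\e}$ corrects $\Phi_0$ to a genuine symplectomorphism $\Phi : F_{c_1,\e} \to F_{c_2,\e}$. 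Since the Milnor fiber is $(n-1)$-connected and $n \geq 2$, we have $H^1(F_{c,\e})=0$, so the closed $1$-form $\Phi^\ast \theta_{c_2} - \theta_{c_1}$ is exact; hence $\Phi$ is an exact symplectomorphism.

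To pass to the completions, recall that near $\partial F_{c,\e}$ the Liouville flow $\phi^{\fib}_t$ trivializes the exact structure as a piece of the symplectization of $\theta_c|_{\partial F_{c,\e}}$. Because our connection is tangent to $\partial W$, $\Phi$ restricts to a diffeomorphism $\partial F_{c_1,\e} \to \partial F_{c_2,\e}$; a further Moser-type adjustment in a collar, again exact because $H^1=0$, brings $\Phi$ into the form of a contactomorphism times the identity in the $\R^+$ coordinate. Extending by this contactomorphism over the cylindrical ends yields the desired exact symplectomorphism $\overline{F}_{c_1} \to \overline{F}_{c_2}$. Any two such constructions differ by the choices of cut-off, Moser primitive, and collar adjustment; interpolating these smoothly produces a symplectic isotopy starting at the identity, which is automatically Hamiltonian by vanishing of $H^1(\overline{F}_{c_1})$, yielding the claimed uniqueness up to Hamiltonian isotopy.
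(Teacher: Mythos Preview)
Your outline is a plausible strategy, but it diverges from the paper's argument and has a genuine gap in the boundary handling. You build $\Phi_0$ from a non-symplectic Ehresmann connection and then Moser-correct to a symplectomorphism $\Phi$. The problem is that the Moser isotopy $\psi_t$ on $F_{c_1,\e}$ has no reason to be tangent to $\partial F_{c_1,\e}$: the Moser vector field is determined by a primitive of $\frac{d}{dt}\omega_t$, and since your blended connection was \emph{arbitrary} near the boundary, $\Phi_0^\ast\omega_{c_2}$ need not agree with $\omega_{c_1}$ there. So your sentence ``Because our connection is tangent to $\partial W$, $\Phi$ restricts to a diffeomorphism $\partial F_{c_1,\e}\to\partial F_{c_2,\e}$'' conflates $\Phi_0$ (which does preserve boundaries) with the corrected $\Phi$ (which a priori does not). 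The subsequent ``Moser-type adjustment in a collar'' is then being asked to repair an unspecified defect, and as written is too vague to carry weight.

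The paper avoids all of this by working on the total space rather than on individual fibres. Instead of abandoning the symplectic connection near $\partial E$, it modifies the closed $2$-form $\Omega$ to a closed $\Omega''$ which (i) restricts to the original $\omega_c$ on every fibre, and (ii) has horizontal distribution invariant under the fibrewise Liouville flow $\phi^{\fib}_t$ in a collar of $\partial E$ (Lemma~\ref{th:nicehorizontaltangentspace}). Parallel transport for $\Omega''$ is then genuinely symplectic on fibres \emph{and} commutes with the Liouville flow near the boundary, so it extends tautologically to the conical completions. No Moser correction, no separate boundary argument, and exactness comes for free from the standard fact that symplectic parallel transport is exact. Your approach can likely be salvaged by choosing the boundary connection more carefully (e.g.\ so that $\Phi_0$ is already symplectic and Liouville-equivariant near $\partial W$), but at that point you are essentially reconstructing the paper's $\Omega''$.
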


Fix $\e < \e_f$, and let \bq E = \bigcup_{|c| < \delta(\e)} F_{c, \e}. \eq
Let $\partial E \subset E$ be its `horizontal' boundary, i.e. the union of the boundaries of all the $F_{c, \e}$. (When talking about the tangent space to $E$ at a point of $\partial E$, we use the same convention as for $F_{c, \e}$.) Putting together the negative Liouville flows on each fibre gives smooth maps $E \to E$, that we shall also denote $\phi^{\fib}_t$; the associated vector field ($Z^c$ on each fibre) will be called $Z^{\fib}$. 

Fix $c$ with $|c| < \d(\e)$. For every $p \in F_c$, the vector space $T_p F_c \subset T_p E$ has a canonical complement, given by taking the symplectic orthogonal to $T_p F_c$. This gives a `horizontal tangent space' for our fibration: every vector in $T_{f(p)} B_{\d(\e)}$ has a preferred lift in $T_p E$. 
Fix a smooth path $ \gamma: [0, 1] \to B_{\d(\e)} \backslash 0$. The `symplectic parallel transport' associated to the horizontal tangent space is a priori only defined on a (possibly empty) subset of $F_{\gamma(0)}$: at points of the boundary $\partial E$, the horizontal tangent space may not lie in the tangent space of $\partial E$.  By construction, we have the following:
\begin{lemma}
Symplectic parallel transport defines an exact symplectomorphism from its domain in $F_{\gamma(0)}$ to its image in $F_{\gamma(1)}$. 
\end{lemma}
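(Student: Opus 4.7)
The plan is to reduce the lemma to two applications of Cartan's magic formula on the total space $E$, using only the fact that the horizontal distribution is defined as the symplectic orthogonal to $TF_c$.

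First I would set up notation. Given $\gamma:[0,1]\to B_{\delta(\e)}\setminus 0$, let $X_t$ be the time-dependent vector field on the relevant open subset of $E$ whose value at $p\in F_{\gamma(t)}$ is the horizontal lift of $\dot\gamma(t)$, and let $\Phi_t$ be its flow, so $\Phi_t$ is a diffeomorphism from an open subset of $F_{\gamma(0)}$ to an open subset of $F_{\gamma(t)}$ wherever it is defined. The key observation to record at the outset is that by the very definition of horizontality, the 1-form $i_{X_t}\Omega$ vanishes on vectors tangent to the fiber: for $v\in T_pF_{\gamma(t)}$ we have $\Omega(X_t,v)=0$. Equivalently, the pullback of $i_{X_t}\Omega$ under the inclusion $F_{\gamma(t)}\hookrightarrow E$ is zero.

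For the symplectomorphism statement, I would differentiate and use Cartan's formula:
\bq
\frac{d}{dt}\,\Phi_t^*\omega_{\gamma(t)} \;=\; \Phi_t^*\bigl((\mathcal{L}_{X_t}\Omega)|_{F_{\gamma(t)}}\bigr) \;=\; \Phi_t^*\bigl((d\,i_{X_t}\Omega + i_{X_t}d\Omega)|_{F_{\gamma(t)}}\bigr).
\eq
The second term vanishes since $\Omega$ is closed; the first, since $d$ commutes with pullback along the fiber inclusion, equals $d$ of the restriction of $i_{X_t}\Omega$ to $F_{\gamma(t)}$, which is zero by the observation above. Integrating from $0$ to $1$ yields $\Phi_1^*\omega_{\gamma(1)}=\omega_{\gamma(0)}$.

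For exactness, I would run the same calculation with $\theta$ in place of $\Omega$. Cartan's formula gives
\bq
\mathcal{L}_{X_t}\theta \;=\; d\,i_{X_t}\theta + i_{X_t}d\theta \;=\; d\,i_{X_t}\theta + i_{X_t}\Omega.
\eq
Restricted to $F_{\gamma(t)}$ the second term vanishes as above, while the first is $d h_t$, where $h_t:=\theta(X_t)|_{F_{\gamma(t)}}$. Integrating $\frac{d}{dt}\Phi_t^*\theta_{\gamma(t)} = d(\Phi_t^* h_t)$ then produces
\bq
\Phi_1^*\theta_{\gamma(1)} - \theta_{\gamma(0)} \;=\; d\!\int_0^1 (h_t\circ\Phi_t)\,dt,
\eq
which is manifestly exact. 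There is no serious obstacle; the only bookkeeping issue is distinguishing pullback by $\Phi_t$ from restriction along fiber inclusions, and observing that since parallel transport stays inside $E$ throughout, the cylindrical ends of the completion $\overline{F}$ play no role in this argument.
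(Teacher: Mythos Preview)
Your argument is correct and is the standard justification for this fact. The paper does not actually supply a proof: it simply prefaces the lemma with ``By construction, we have the following'' and moves on, treating the result as well known. Your Cartan's-formula computation is exactly the verification one would expect to see filled in; the only point worth flagging is that the same computation, with $\Omega$ replaced by any closed $2$-form $\Omega''$ whose restriction to each fibre equals $\omega_c$, goes through verbatim---this is what the paper's remark immediately after the lemma is pointing at, and it is what is used later when $\Omega$ is replaced by the modified form $\Omega''$ of Lemma~\ref{th:nicehorizontaltangentspace}.
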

Note that we could carry out this process for any symplectic form on $E$ that restricts to one on the fibres, and that it is enough for the form to be closed and non-degenerate on each fibre. 

Considering volumes, one cannot hope in general for symplectomorphisms defined on the whole of each fibre. Instead, we shall work with the completed fibres. The key is:

\begin{lemma}\label{th:nicehorizontaltangentspace}
We can construct a closed form $\Omega''$ on $E$ such that
\begin{enumerate}
\item for every $c$, $\Omega'' |_{F_c} = \Omega|_{F_c}$;

\item the horizontal tangent space determined by $\Omega''$ is invariant under $\phi^{\fib}_t$ in some collar neighbourhood of the boundary $\partial E$;

\item $\Omega''$ and $\Omega$ agree outside a collar neighbournood of $\partial E$.
\end{enumerate}
\end{lemma}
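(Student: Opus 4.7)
The strategy is to modify $\Omega$ only in a collar neighbourhood of $\partial E$, by replacing its primitive $\theta$ there with a local model adapted to the fibrewise flow $\phi^{\fib}$. The need for modification arises because the ambient Liouville vector field $Z$ (satisfying $\iota_Z\Omega=-\theta$) is not in general tangent to the fibres of $f$, whereas $Z^{\fib}$ is. Even after trivialising a collar by $\phi^{\fib}$, the primitive $\theta$ does not have the product normal form that would make the horizontal distribution commute with $\phi^{\fib}$; we must therefore alter $\theta$ by something that is invisible on fibres.

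Concretely, since $Z^{\fib}$ points inwards along $\partial E$, its flow yields a fibre--preserving diffeomorphism
$$\Phi\colon \partial E\times [0,r_0]\longrightarrow N,\qquad \Phi(p,r)=\phi^{\fib}_{r}(p),$$
onto a closed collar $N$ of $\partial E$ in $E$, provided $r_0$ is small. Let $\pi\colon \partial E\times [0,r_0]\to \partial E$ be the projection, and set $\alpha:=\theta|_{\partial E}$; this is a contact form on $\partial E$, since $\partial E$ is an open subset of $S_\e$ on which $\theta$ restricts to the standard contact form. Define the \emph{model primitive}
$$\tilde\theta := e^{-r}\,\pi^*\alpha$$
on $\partial E\times [0,r_0]$. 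For a fibre slice $\partial F_c\times [0,r_0]\subset \Phi^{-1}(F_c\cap N)$, the restriction of $\tilde\theta$ is $e^{-r}\pi^*\alpha_c$; on the other hand, the fibrewise Liouville flow scales $\theta_c$ by $e^{-r}$, so $\Phi^*\theta$ restricts to the same $e^{-r}\pi^*\alpha_c$ on every fibre slice. Thus $\tilde\theta$ and $\Phi^*\theta$ agree on all vectors tangent to fibres, though they disagree on horizontal ones in general.

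Now interpolate. Fix a cutoff $\chi\colon [0,r_0]\to [0,1]$ smooth, with $\chi\equiv 1$ on $[0,r_0/3]$ and $\chi\equiv 0$ on $[2r_0/3,r_0]$. Define
$$\theta'':= (\Phi^{-1})^{*}\bigl(\chi\,\tilde\theta+(1-\chi)\,\Phi^{*}\theta\bigr)\ \text{on } N,\qquad \theta'':=\theta\ \text{on }E\setminus N,$$
and $\Omega'':=d\theta''$. Condition (1) is immediate: on each fibre both terms of the convex combination restrict to the same form, so $\theta''|_{F_c}=\theta|_{F_c}$ and hence $\Omega''|_{F_c}=\Omega|_{F_c}$. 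Condition (3) holds by construction, with the collar of condition (3) equal to $\Phi(\partial E\times [0,2r_0/3))$. For condition (2), on the sub--collar $\Phi(\partial E\times [0,r_0/3))$ we have $\Omega''=(\Phi^{-1})^{*}d\tilde\theta$, and a direct computation of
$$d\tilde\theta = -e^{-r}\,dr\wedge\pi^*\alpha + e^{-r}\,\pi^*d\alpha$$
shows that a vector $(X,t)\in T_p\partial E\oplus \R$ at $(p,r)$ lies in the symplectic orthogonal to the vertical subspace $T_p\partial F_{f(p)}\oplus \R$ precisely when $\alpha(X)=0$ and $t\,\alpha(Y)=d\alpha(X,Y)$ for all $Y\in T_p\partial F_{f(p)}$. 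Since these conditions are independent of $r$, the horizontal distribution is $\partial_r$--invariant in the chart, hence $\phi^{\fib}$--invariant in the sub--collar. That $\alpha$ is contact guarantees this system cuts out a $2$--dimensional subspace at each point, the correct dimension for the horizontal.

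The one delicate point is the choice of model: taking $\Phi^{*}\theta$ itself would fail, because $(\phi^{\fib}_{r})^{*}\theta\neq e^{-r}\theta$ when $Z\neq Z^{\fib}$, and this discrepancy is precisely the obstruction to $\phi^{\fib}$--invariance of $\Omega$'s own horizontal distribution. Replacing $\Phi^{*}\theta$ by $\tilde\theta$ surgically deletes the discrepancy inside the collar while leaving fibrewise restrictions intact, and the cutoff $\chi$ lets us glue back to $\theta$ further in.
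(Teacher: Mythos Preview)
Your proof is correct and follows essentially the same construction as the paper: both define a model primitive $e^{-r}\pi^*\alpha$ (the paper writes this as $e^t(\phi_t^{\fib})^*\alpha$ with the opposite sign convention on the collar parameter) on a collar trivialised by the fibrewise Liouville flow, observe that it agrees with $\theta$ on fibre directions, and then interpolate via a cutoff. Your version is somewhat more explicit in verifying property~(2) by computing the horizontal distribution for $d\tilde\theta$ directly, whereas the paper simply asserts that $\Omega'=d\theta'$ satisfies (1) and (2) by construction; but the underlying idea and the form $\Omega''$ produced are the same.
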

To prove Lemma \ref{th:changingc}, let us replace $\Omega$ by such an $\Omega''$, and work instead with the horizontal tangent space given by $\Omega''$. Property 2 then allows us to extend the corresponding parallel transport to the completed fibres, in the obvious fashion. 

\begin{proof}of Lemma \ref{th:nicehorizontaltangentspace}.
There exists $\tau > 0$ such that $Z^{\fib}$ is non zero at all points of
\bq
U  = \bigcup_{t \in [0, \tau] } \phi^{\fib}_t (\partial E) \,\, \subset E.
\eq
We will use the identification
\begin{equation}
\begin{array}{ccl}
\partial E \times [- \tau, 0] & \cong & U  \\
 (a, t) & \mapsto & \phi^{\fib}_{-t}(a).
\end{array}
\end{equation}
Let $\alpha = \theta|_{\partial E}$. Define $\theta' \in \Omega^1(U)$ by
\bq
\theta'= e^t (\phi_t^{\fib})^{\ast} \alpha
\eq
where $t \in [-\tau, 0]$. By construction, $\theta'|_{F_c} = \theta|_{F_c}$, for all fibres $F_c$. The form $\Omega' = d \theta' \in \Omega^2 (U)$ satisfies conditions one and two.

Let $\xi = \theta' - \theta \in \Omega^1 (U)$. Let $\psi$ be  a smooth cut-off function on $[-\tau, 0]$ such that $\psi = 1$ on $[-\tau/2, 0]$ and $\psi =0 $ on $[-\tau, -2\tau /3]$. This induces a function $U \to \R$, that we also denote by $\psi$. Set 
\bq
\Omega'' = \Omega + d (\psi \xi) \in \Omega^2(U).
\eq
This agrees with $\Omega'$ on an collar neighbourhood on $\partial E$, and with $\Omega$ outside a (larger) collar neighbourhood of $\partial E$, and so satisfies conditions 2 and 3. As $\xi$ vanishes on fibres, condition one is also satisfied.
\end{proof}

%% Biholomorphic change of coordinates

\paragraph{Holomorphic reparametrization.}

So far, we have treated $f$ simply as the germ at the origin of a function $\C^{n+1} \to \C$; from the point of view of singularity theory, it is more natural to think of $f$ as a representative of its equivalence class under biholomorphic change of coordinates (preserving the origin). What can we say for the corresponding (completed) fibres? 

\begin{lemma}\label{th:holoreparemetrisation}
Say $f = g \circ h$, some holomorphic change of coordinates $h$. 
Then there is an exact symplectic embedding from  a Milnor fibre of $f$ to a completed Milnor fibre of $g$, and vice-versa.
\end{lemma}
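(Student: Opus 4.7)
The plan is to use the biholomorphism $h$ directly as the candidate embedding, then correct its failure to be exact symplectic via a Moser-type argument, exploiting the freedom to shrink the source Milnor fibre and the extra room provided by completing the target.

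First, I would fix admissible parameters. Since $h$ fixes the origin and is a biholomorphism, one can choose $0 < \epsilon < \tilde{\epsilon} < \epsilon_f$, some $\epsilon' < \epsilon_g$, and a $c$ admissible for all three such that $h(\overline{B}_{\tilde{\epsilon}}) \subset B_{\epsilon'}$. Then $h$ restricts to a diffeomorphic embedding $h\colon F^f_{c,\tilde{\epsilon}} \hookrightarrow F^g_{c,\epsilon'}$, biholomorphic onto its image. Pulling back the Liouville data yields an exact K\"ahler structure $(h^*\omega_g, h^*\theta_g)$ on $F^f_{c,\tilde{\epsilon}}$ compatible with the \emph{same} induced complex structure as $(\omega_f, \theta_f)$; the two K\"ahler $2$-forms are then cohomologous, and their convex interpolation $\omega_t = (1-t)\omega_f + t\, h^*\omega_g$ consists of K\"ahler (hence symplectic) forms.

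Next, I would apply the exact-symplectic Moser trick on $F^f_{c,\tilde{\epsilon}}$. Set $\theta_t = (1-t)\theta_f + t\, h^*\theta_g$ and define a time-dependent vector field $X_t$ by $\iota_{X_t}\omega_t = \theta_f - h^*\theta_g$. A direct Cartan-formula computation gives $\frac{d}{dt}(\psi_t^*\omega_t) = 0$ and $\frac{d}{dt}(\psi_t^*\theta_t) = d(\psi_t^*\iota_{X_t}\theta_t)$, where $\psi_t$ is the flow of $X_t$. It follows that $\psi_1^*(h^*\omega_g) = \omega_f$ and $\psi_1^*(h^*\theta_g) - \theta_f$ is exact. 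Choosing $\tilde{\epsilon} - \epsilon$ sufficiently large compared to the uniform bound on $X_t$ over the compact set $F^f_{c,\tilde{\epsilon}}$, the flow starting from $F^f_{c,\epsilon}$ stays inside $F^f_{c,\tilde{\epsilon}}$ for all $t \in [0,1]$. The composition
$$
h \circ \psi_1 \colon F^f_{c,\epsilon} \;\longrightarrow\; F^g_{c,\epsilon'} \;\subset\; \overline{F^g_{c,\epsilon'}}
$$
is then the required exact symplectic embedding. The embedding in the reverse direction (from a Milnor fibre of $g$ into a completed Milnor fibre of $f$) is obtained by running the identical argument with $h^{-1}$ in place of $h$.

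The main obstacle is the boundary control in the Moser step: one must ensure that the flow $\psi_t$ does not escape the domain on which $h$, and hence the pulled-back Liouville structure, is defined. This is resolved by exploiting the freedom built into the statement --- the lemma asks only for \emph{some} Milnor fibre to embed --- so one simply picks $\tilde{\epsilon}$ enough larger than $\epsilon$ to absorb the finite displacement of the flow. A secondary bookkeeping point is checking that a single $c$ can be made admissible for all the various radii simultaneously, which is immediate from the monotone dependence of $\delta(\cdot)$. One could in principle avoid passing to completions altogether, since the constructed map lands inside $F^g_{c,\epsilon'}$ itself, but the completion is convenient and matches the form of the statement used elsewhere in the section.
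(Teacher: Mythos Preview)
Your argument is correct and follows the same route as the paper: embed via $h$, interpolate linearly between the two $J$-compatible K\"ahler forms, and apply a Moser isotopy. The paper is terser --- it leaves the boundary-control step implicit and deduces exactness a posteriori from $H^1(F;\R)=0$ (using $n\geq 2$) rather than from your direct computation with the primitives $\theta_t$ --- but the strategy is identical.
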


\begin{proof}
First note that $h$ maps (subsets of) Milnor fibres of $f$ to (subsets of) Milnor fibres of $g$. 
There exists $\e'_f$ such that $h(B_{\e'_f}) \subset B_{\e_g}$. Fix $c$ such that $ |c| < \d(\e'_f)$. $h$ maps $F^f_c$ into $F^g_c$, and thus gives an embedding $F_{c, f} \hookrightarrow \overline{F}_{c,g}$; this is not in general symplectic. However, notice that both symplectic forms are compatible with $J$. By connecting the associated metrics, we can find a path connecting the symplectic forms. Also, note that any closed two-form is automatically exact. Thus we can use a Moser argument to deform our embedding to a symplectic one. By homology considerations, this symplectic embedding is then automatically exact (notice that we are using $n \geq 2$). 
\end{proof}

\begin{remark}
A more careful set-up might give exact symplectomorphisms between completed Milnor fibres of $f$ and $g$. The above will suffice for Corollary \ref{th:propetySforanyfibre}.
\end{remark}

%% Property $S$. 

\paragraph{Property $S$.} 

We say that a exact symplectic manifold with contact type boundary has `property $S$' if it satisfies the hypothesis of Theorem \ref{th:freegroup}: its interior contains two Lagrangian spheres $L_0$, $L_1$ such that $hf(L_0, L_1) \geq 2$ and $L_0$ and $L_1$ are not Fukaya isomorphic.

\begin{lemma}\label{th:propertyS}
Suppose there's an exact symplectic embedding from a Milnor fibre $F$ to a completed Milnor fibre $\overline{F'}$ (these needn't be fibres of the same polynomial), and, moreover, that $F$ has property $S$.  Then $F'$ has property $S$. 
\end{lemma}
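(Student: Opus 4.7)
The plan is to transport the witnessing Lagrangian spheres from $F$ into the interior of $F'$ and verify that the two defining conditions of property $S$ survive the exact symplectic embedding, using Abouzaid's confinement lemma (Lemma \ref{th:Abouzaid}) to identify the relevant Floer-theoretic data on both sides.

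Let $L_0, L_1 \subset F$ be Lagrangian spheres witnessing property $S$ for $F$, and let $\iota\colon F \hookrightarrow \overline{F'}$ be the given exact symplectic embedding. First I would push $\iota(L_0)$ and $\iota(L_1)$ into the interior of $F'$. Since $\iota(F)$ is compact, it lies in $F' \cup \big(\partial F' \times [0, T]\big)$ for some $T > 0$. Applying the negative Liouville flow of $\overline{F'}$ for time $T' > T$ is an exact symplectomorphism of $\overline{F'}$ that carries $\iota(L_0), \iota(L_1)$ to a pair of exact Lagrangian spheres $L_0', L_1'$ contained in the interior of $F'$. Since exact symplectomorphisms preserve quasi-isomorphism classes and Floer ranks in $\Fuk(\overline{F'})$, it suffices to verify the two conditions for the pair $\iota(L_0), \iota(L_1)$.

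Next I would set up compatible auxiliary data. Fix a Floer datum for each of the pairs $(L_i, L_j)$ in $F$ for $i, j \in \{0, 1\}$, together with consistent universal choices of strip-like ends and perturbation data. Extend the Hamiltonian perturbations by zero and, using contractibility of the space of $\omega$-compatible almost complex structures, extend each almost complex structure to one on $\overline{F'}$ of contact type near infinity and agreeing with the chosen structure on a neighbourhood of $\iota(F)$; this is exactly the construction preceding the proof of Proposition \ref{th:lagspherestructure}. By Lemma \ref{th:Abouzaid}, every pseudo-holomorphic strip or disc in $\overline{F'}$ with boundary on $\iota(L_0) \cup \iota(L_1)$ and asymptotics inside $\iota(F)$ lies entirely inside $\iota(F)$. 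Consequently, for all $i, j \in \{0, 1\}$ the Floer complexes $CF_{\overline{F'}}(\iota(L_i), \iota(L_j))$ agree on the nose with $CF_F(L_i, L_j)$, and the $\mu^2$ products between them agree as well. In particular
\bq
hf_{\overline{F'}}(\iota(L_0), \iota(L_1)) \;=\; hf_F(L_0, L_1) \;\geq\; 2.
\eq

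Finally, suppose that $\iota(L_0)$ and $\iota(L_1)$ were quasi-isomorphic in $\Fuk(\overline{F'})$; then there would exist $\mu^1$-closed cochains $a \in CF_{\overline{F'}}(\iota(L_0), \iota(L_1))$ and $b \in CF_{\overline{F'}}(\iota(L_1), \iota(L_0))$ with $[\mu^2(b,a)] = 1_{\iota(L_0)}$ and $[\mu^2(a,b)] = 1_{\iota(L_1)}$. Under the identification of chain complexes and $A_\infty$-operations above, the same $a, b$ regarded as cochains in $F$ would exhibit $L_0, L_1$ as quasi-isomorphic in $\Fuk(F)$, contradicting property $S$ for $F$. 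Hence $L_0', L_1' \subset \mathrm{int}(F')$ witness property $S$ for $F'$. The main obstacle is purely administrative: one has to arrange the consistent universal choices of strip-like ends and perturbation data on $\overline{F'}$ so that they restrict to the chosen data on $\iota(F)$, exactly as in the set-up for Proposition \ref{th:lagspherestructure}; once this is done, Abouzaid's lemma gives the strict identification of $A_\infty$-operations for free.
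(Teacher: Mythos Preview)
Your argument follows the same strategy as the paper's proof---transport the spheres via $\iota$, invoke Abouzaid's confinement lemma to identify Floer data, and use the Liouville flow to push the images into $\mathrm{int}(F')$---but there is one genuine gap and one point of imprecision.

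First, the paper's Fukaya category is only defined for exact symplectic manifolds with contact type boundary; the completion $\overline{F'}$ is non-compact, so ``$\Fuk(\overline{F'})$'' is not a priori available in this framework. The paper handles this by passing to a compact truncation $\overline{F'}_T = F' \cup \big(\partial F' \times [0,T]\big)$ with $T$ large enough that $\iota(F) \subset \mathrm{int}(\overline{F'}_T)$, and works there. You should do the same.

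Second, and more importantly, your final sentence does not follow from what precedes it. You have shown that $L'_0, L'_1$ satisfy the two conditions in $\Fuk(\overline{F'}_T)$ (or $\Fuk(\overline{F'})$, informally), but property $S$ for $F'$ requires the conditions to hold in $\Fuk(F')$. This needs a second application of Lemma~\ref{th:Abouzaid}, now with $U = F'$ sitting inside $N = \overline{F'}_T$: since $L'_0, L'_1 \subset \mathrm{int}(F')$, the same extension-and-confinement argument identifies $CF_{F'}(L'_i, L'_j)$ and the relevant $\mu^2$-products with their counterparts in $\overline{F'}_T$. The paper makes exactly this second invocation explicit (``Using Lemma~\ref{th:Abouzaid} again\ldots''); once you add it, your proof is complete and matches the paper's.
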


\begin{proof}
Let $\iota: F \hookrightarrow \overline{F'}$ be the exact symplectic embedding, and $L_0$, $L_1 \subset F$ the Lagrangian spheres given by property $S$. 
Let $\overline{F'}_T = F' \cup \partial F' \times [0, T]$ be a truncation of the completed Milnor fibre; this is an exact symplectic manifold with contact type boundary. Choose $T$ large enough such that $\iota(F) \subset int(\overline{F'}_T)$. 
If follows from Lemma \ref{th:Abouzaid} that $hf(\iota(L_0), \iota(L_1)) = hf(L, L')$, and $\iota(L_0)$ and $\iota(L_1)$ are not isomorphic in $\Fuk (\overline{F'}_T)$. 
Flow $\iota(L_0)$ and $\iota(L_1)$ by the negative Liouville vector field inside $\overline{F'}_T$ until their images, say $L'_0$ and $L'_1$, lie in the interior of $F'$. $L'_0$ is quasi-isomorphic to $L_0$ in $\Fuk(\overline{F'}_T)$, and similarly for $L'_1$ and $L_1$. Thus $L'_0$ and $L'_1$ are not quasi-isomorphic, and $hf(L'_0, L'_1) = hf(L_0, L_1)$. Using Lemma \ref{th:Abouzaid} again, we see that this is also true in the Fukaya category of $F'$.
\end{proof}

The following is now immediate.

\begin{cor}\label{th:propetySforanyfibre}
If $F_c$, any Milnor fibre of a polynomial $f$, has property $S$, then so do all completed Milnor fibres of $f$. Moreover, if $g$ is another representative of the equivalence class of the germ of $f$, then all Milnor fibres of $g$ have property $S$.
\end{cor}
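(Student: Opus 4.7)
The plan is to obtain both claims as formal consequences of Lemmas \ref{th:changingc}, \ref{th:holoreparemetrisation}, and \ref{th:propertyS}: property $S$ is essentially a statement about the Fukaya category, and each of these lemmas transports Fukaya-theoretic data in a controlled way between (completed) Milnor fibres.

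For the first claim, suppose some Milnor fibre $F_{c_0}$ of $f$ has property $S$. For any other admissible $c$, Lemma \ref{th:changingc} supplies an exact symplectomorphism $\Phi \colon \overline{F}_{c_0} \to \overline{F}_c$. Composing the inclusion $F_{c_0} \hookrightarrow \overline{F}_{c_0}$ with $\Phi$ gives an exact symplectic embedding $F_{c_0} \hookrightarrow \overline{F}_c$, and Lemma \ref{th:propertyS} then yields that $F_c$ has property $S$. Since $c$ was an arbitrary admissible point, every Milnor fibre of $f$ has property $S$; on the level of completed fibres (which are all mutually exact symplectomorphic via Lemma \ref{th:changingc}) the same conclusion follows.

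For the second claim, pick a holomorphic change of coordinates $h$ taking $f$ to $g$ (in either direction). Lemma \ref{th:holoreparemetrisation} then provides an exact symplectic embedding $F^{f}_{c_0} \hookrightarrow \overline{F}^{g}_{c'}$ for appropriate admissible $c'$. Starting from a fibre $F^{f}_{c_0}$ with property $S$ (available by the first part of the statement), Lemma \ref{th:propertyS} produces a Milnor fibre of $g$ with property $S$. Invoking the first claim now, applied to $g$, shows that every Milnor fibre of $g$ has property $S$.

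The main conceptual point (already built into Lemma \ref{th:propertyS}) is that after embedding and truncating, one needs the two spheres witnessing property $S$ to remain in the interior of the target (uncompleted) fibre and to retain both their Floer rank and their non-Fukaya-isomorphism. This is handled by flowing by the negative Liouville vector field (which preserves the quasi-isomorphism class) and then appealing to Abouzaid's Lemma \ref{th:Abouzaid}, which guarantees that the relevant Floer strips cannot escape the image of the embedded fibre. Granted these ingredients, the corollary is a bookkeeping exercise in composing the three lemmas in the order indicated above.
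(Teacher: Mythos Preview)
Your proof is correct and matches the paper's approach exactly: the paper simply declares the corollary ``immediate'' after stating Lemmas \ref{th:changingc}, \ref{th:holoreparemetrisation}, and \ref{th:propertyS}, and you have filled in precisely the intended bookkeeping. The only slight imprecision is that Lemma \ref{th:holoreparemetrisation} gives an embedding of \emph{some} Milnor fibre of $f$ rather than a chosen $F^f_{c_0}$, but since you invoke the first part to know every fibre of $f$ has property $S$, this is harmless.
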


%%---------------------------------------
%% 	ADJACENCY OF SINGULARITIES AND SYMPLECTIC EMBEDDINGS
\subsection{Adjacency of singularities and symplectic embeddings}\label{sec:adjacency}

Hereafter an `isolated singularity' $[f]$ will be the germ of a polynomial $f: \C^{n+1} \to \C$, with $f(0) = 0$ and an isolated singularity at $0$, up to biholomorphic change of coordinates (fixing the origin).

\begin{definition}
Let $[f]$ and $[g]$ be isolated singularities. $[f]$ is said to be adjacent to $[g]$ if there exist arbitrarily small polynomial perturbations $p_k$, $k \in \mathbb{N}$, such that $[f + p_k ] = [g]$. 
\end{definition}
Adjacency of singularities is a well-studied classical topic; we shall use material collected in the survey \cite{Arnold6}. 

\begin{lemma}\label{th:adjacencyfibresembed}
Suppose that $[f]$ and $[g]$ are isolated singularities such that $[f]$ is adjacent to $[g]$, and that $n \geq 2$. Then there exists an exact symplectic embedding from a Milnor fibre of $g$ to a completed Milnor fibre of $f$.
\end{lemma}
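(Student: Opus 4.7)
The plan is to reduce to Lemma~\ref{th:holoreparemetrisation} via a small polynomial perturbation combined with a parameter-dependent Moser argument. First, I would fix a Milnor radius $\e \in (0, \e_f)$ and an admissible value $c_0 \in \C$ with $0 < |c_0| < \d(\e)$, so that $F^f_{c_0, \e}$ is a genuine Milnor fibre of $f$ with contact type boundary. By the adjacency hypothesis I can then choose a polynomial perturbation $p$ with $[f + p] = [g]$ and $\|p\|_{C^1(\overline{B}_\e)}$ so small that, for every $s \in [0,1]$, the function $f_s := f + sp$ satisfies:
(i) $f_s^{-1}(c_0) \pitchfork S_\e$ (an open condition on $f_s$), and
(ii) $c_0$ is a regular value of $f_s|_{\overline{B}_\e}$ (critical values of $f_s$ are $O(\|p\|_{C^1})$, hence smaller than $|c_0|$ once $p$ is sufficiently small).

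With these choices in hand, I would form the total space
\[
Y := \{(z, s) \in \overline{B}_\e \times [0, 1] : f_s(z) = c_0\},
\]
which by (i) and (ii) is a smooth manifold with corners whose projection to $[0,1]$ is a proper submersion; Ehresmann's theorem then furnishes a trivialization $Y \cong X_0 \times [0,1]$ with $X_s := f_s^{-1}(c_0) \cap \overline{B}_\e$. Pulling the restriction of $\Omega = d\theta$ back along this trivialization yields a smooth family of exact symplectic forms $\omega_s = d\theta_s$ on $X_0$. A standard Moser argument then produces a diffeomorphism $\psi : X_0 \to X_1$ with $\psi^* \omega_1 = \omega_0$. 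Since $n \geq 2$, the Milnor fibre is homotopy equivalent to a bouquet of $n$-spheres, so $H^1(X_0; \R) = 0$; this forces the closed one-form $\psi^*\theta_1 - \theta_0$ to be exact, and hence $\psi$ is automatically an \emph{exact} symplectomorphism. Provided I choose the Ehresmann trivialization to be cylindrical in a collar of $\partial X_0$ (along the lines of the modification carried out in the proof of Lemma~\ref{th:nicehorizontaltangentspace}), $\psi$ extends to an exact symplectomorphism of the completed fibres $\overline{F}^f_{c_0, \e} \cong \overline{F}^{f+p}_{c_0, \e}$.

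Finally, since $[f + p] = [g]$, Lemma~\ref{th:holoreparemetrisation} provides an exact symplectic embedding from a (small) Milnor fibre of $g$ into the completed Milnor fibre $\overline{F}^{f+p}_{c_0, \e}$. Composing with the inverse of the symplectomorphism constructed above yields the desired exact symplectic embedding of a Milnor fibre of $g$ into a completed Milnor fibre of $f$.

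The main obstacle I expect is the careful bookkeeping needed to keep the Moser isotopy tangent to the boundary in a collar of $\partial X_0$, so that extension to the completion is automatic and preserves the contact type structure; this requires choosing the Ehresmann trivialization compatibly with the negative Liouville flow on each fibre, mirroring the collar argument used to construct $\Omega''$ in Lemma~\ref{th:nicehorizontaltangentspace}. The hypothesis $n \geq 2$ enters in exactly one place: it ensures $H^1$ of the fibre vanishes, which is what promotes the bare symplectomorphism provided by Moser into an exact one, and it is also what allowed the appeal to Lemma~\ref{th:holoreparemetrisation} in the first place.
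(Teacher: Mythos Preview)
Your overall strategy---interpolate between $f$ and $f+p$ and transport the fibre---matches the paper's, but there is a genuine gap in the final step. You claim that Lemma~\ref{th:holoreparemetrisation} gives an embedding of a Milnor fibre of $g$ into $\overline{F}^{f+p}_{c_0,\e}$. But $\overline{F}^{f+p}_{c_0,\e}$ is \emph{not} a completed Milnor fibre of $f+p$ in the sense that lemma requires: your $\e$ was chosen as a Milnor radius for $f$, and the perturbed function $f+p$ will in general have additional critical points inside $B_\e$ (coming from the splitting of the singularity). Indeed, your own Ehresmann/Moser argument shows $X_1 \cong X_0$, so $X_1$ is diffeomorphic to the Milnor fibre of $f$ and carries $\mu_f$ vanishing cycles, not $\mu_g$. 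Lemma~\ref{th:holoreparemetrisation} only produces an embedding into the completion of a \emph{genuine} Milnor fibre of $f+p$, which is a topologically strictly smaller manifold when $[f]\neq[g]$.

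The root of the problem is the order of quantifiers: you fix $c_0$ \emph{before} $p$. As $\|p\|\to 0$, the Milnor data of $f+p$ shrink, so a fixed $c_0$ is eventually too large to be admissible for $f+p$, and $X_1$ does not contain any Milnor fibre of $f+p$ at the value $c_0$. The paper avoids this by working with a two-complex-parameter family $(c,\gamma)\in B_{\d(\e)}\times B_2$: the discriminant is then a complex curve, its complement is path-connected, and one can connect a point $(c,1)$ with $c$ small enough that $Q_{(c,1)}$ \emph{literally contains} a Milnor fibre of $f+p_k$ (hence of a representative of $[g]$) by inclusion, to a point $(c',0)$ where $Q_{(c',0)}$ is a Milnor fibre of $f$; parallel transport along that path then does the job, with no appeal to Lemma~\ref{th:holoreparemetrisation}. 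Your one-real-parameter Moser version can be repaired along the same lines: fix $p$ first, then choose $c_0$ small enough to be admissible for $f+p$ and lying off the (one-real-dimensional) set of critical values of $f_s$, $s\in[0,1]$; then $X_1$ contains $(f+p)^{-1}(c_0)\cap\overline{B}_{\e'}$ for suitable $\e'<\e_{f+p}$, which \emph{is} a Milnor fibre of a representative of $[g]$, and the composition with your $\psi^{-1}$ finishes the proof.
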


\begin{proof}
Consider the map
\begin{equation}
\begin{array}{crclcrcl}
q: & \C^{n+1} \times \C & \to  & \C \times\C \\
& q (\mathbf{z} , \, \gamma) & = & ( f(\mathbf{z}) + \, \gamma p_k(\mathbf{z}), \gamma )
\end{array}
\end{equation}
There exists an $n \in \mathbb{N}$ and $\e < \e_f$ such that for all $(\mathbf{z}, \gamma) \in B_{\d(\e)} \times B_2$, $q^{-1}(c, \gamma) \pitchfork S_{\e}$, where $S_\e$ is the sphere in $\C^{n+1} \times {\gamma}$.
Let $Q_{(c, \gamma)} = q^{-1}(c , \gamma) \cap \overline{B}_\e$. Let $H$ be the set of points $(c, \gamma)$ in $B_{\d(\e)} \times B_2$ such that $Q_{(c, \gamma)}$ is singular. $H$ is an algebraic curve; in particular, this means that $(B_{\d(\e)} \times B_2 )\backslash H$ is path-connected. The $Q_{(c, \gamma)}$ inherit an exact symplectic structure from $\C^{n+2}$, and can be extended to non-compact symplectic manifolds $\overline{Q}_{(c, \gamma)}$ by gluing conical ends, in a similar fashion to the $\overline{F}_c$. For sufficiently small $c$, $Q_{(c, 1)}$ contains a Milnor fibre of a representative of $[g]$ as a subset: if $[f] \neq [g]$, we certainly have $\e_{f+p_k} \leq e$. $Q_{(c', 0)}$ is a Milnor fibre of $f$. Pick a smooth path in $(B_{\d(\e)} \times B_2 )\backslash H$ connecting $(c,1)$ and $(c',0)$.
Adapting the construction used to prove Lemma \ref{th:changingc}, we get exact symplectomorphisms from $\overline{Q}_{(c,1)}$ to $\overline{Q}_{(c', 0)}$, completing the proof.

\end{proof}

\begin{cor}\label{th:propertySadjacency}
Suppose that $[f]$ is adjacent to $[g]$, and that $[g]$ has property $S$. Then $[f]$ also has property $S$.
\end{cor}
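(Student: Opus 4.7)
The plan is to simply chain together the results that have already been established in this subsection. Since the statement is phrased for germs $[f]$ and $[g]$ rather than specific fibres, I would first invoke Corollary \ref{th:propetySforanyfibre} to pass from the hypothesis that $[g]$ has property $S$ to the statement that every Milnor fibre $F^g_c$ of every polynomial representative of $[g]$ has property $S$. In particular, I can pick a convenient representative $g$ and a convenient admissible value of $c$ to work with.

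Next, I would apply Lemma \ref{th:adjacencyfibresembed} to the pair $[f]$, $[g]$: since $[f]$ is adjacent to $[g]$ and we are in dimension $n \geq 2$ (required already by the running hypothesis of this section and by Theorem \ref{th:freegroup}), there exists an exact symplectic embedding from some Milnor fibre $F^g$ of $g$ into some completed Milnor fibre $\overline{F^f}$ of $f$. Being free to choose the representative and the value of $c$ on the $g$ side, I can arrange the embedding to take a fibre already known to have property $S$.

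Then Lemma \ref{th:propertyS} applies directly: the exact symplectic embedding $F^g \hookrightarrow \overline{F^f}$ together with property $S$ for $F^g$ implies property $S$ for the Milnor fibre $F^f$. Finally, another application of Corollary \ref{th:propetySforanyfibre} upgrades this from the specific Milnor fibre $F^f$ back to the statement that the germ $[f]$ has property $S$.

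All the work has been done in Lemmas \ref{th:adjacencyfibresembed} and \ref{th:propertyS} and Corollary \ref{th:propetySforanyfibre}, so there is no real obstacle — the only thing to be careful about is the bookkeeping that lets us move from ``a fibre'' to ``the germ'' on both ends, which is exactly what Corollary \ref{th:propetySforanyfibre} is designed for. The proof is essentially one or two sentences of composition.
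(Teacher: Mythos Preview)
Your proposal is correct and matches the paper's approach: the paper's proof is simply ``This follows immediately from Lemmata \ref{th:propertyS} and \ref{th:adjacencyfibresembed}.'' Your only addition is making explicit the bookkeeping via Corollary \ref{th:propetySforanyfibre} to pass between germs and individual fibres, which is a reasonable (and arguably necessary) elaboration of what the paper leaves implicit.
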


\begin{proof}
This follows immediately from Lemmata \ref{th:propertyS} and \ref{th:adjacencyfibresembed}.
\end{proof}

%-----------------------------------------------------------
%% LAGRANGIAN SPHERES IN (A_m) FIBRES

\subsection{Lagrangian spheres in $(A_m)$ fibres}\label{sec:KhSeidel}

The singularity of type $(A_m)$ is the one associated to the polynomial $z_0^2 + \ldots + z_n^2 +z_{n+1}^{m+1}$; we are interested in the case $n \geq 2$. 
For any $m$, \cite{KhovanovSeidel} describes how to associate Lagrangian submanifolds of the $(A_m)$  fibre to certain curves on the unit disc with $(m+1)$ marked points, which we denote by $D_{m+1}$. If the curve intersects the marked points exactly twice, once at both endpoints, one gets a Lagrangian sphere, defined up to Lagrangian isotopy. Also, an isotopy of the curve, relative to the marked points, gives a Lagrangian isotopy of the sphere.

Suppose the marked points are aligned horizontally. A basis for the homology of the fibre is given by the spheres corresponding to the straight-line segments between consecutive marked points. (See figure \ref{fig:basis} for  the case $m=2$.)
\begin{figure}[htb]
\begin{center}
\includegraphics[height=1.5in, width=4in,angle=0]{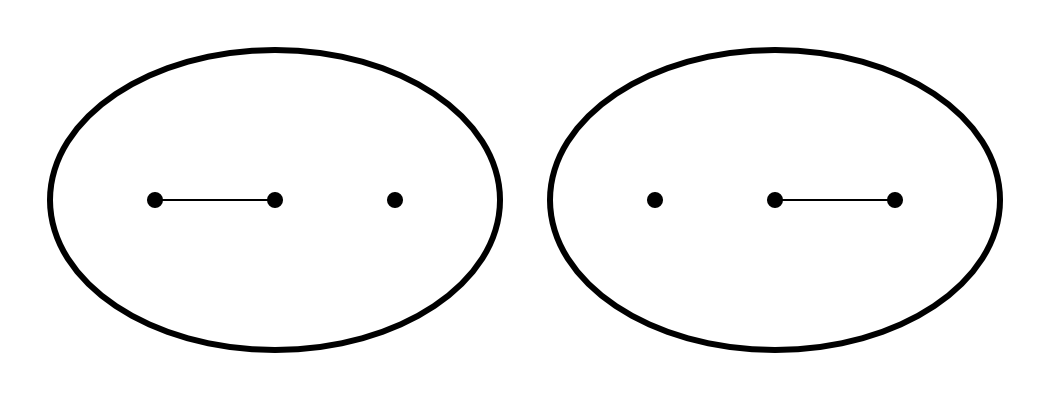}
\caption{Curves corresponding to a basis for homology for the $(A_2)$ fibre, say $a$ and $b$.}
\label{fig:basis}
\end{center}
\end{figure}

A lot of information about these spheres can be read straight from the curves in $D_{m+1}$. Notably, the rank of the Floer cohomology between any two such spheres is twice the geometric intersection number of the curves they are are associated to \cite[Theorem 1.3]{KhovanovSeidel}. For two curves that intersect `minimally' (see \cite{KhovanovSeidel}), this number counts $1/2$ for a common end-point, and 1 for all other intersection points.

 It is also easy to determine the homology class of a sphere. 
For even $n$, it only depends on the end-points of the corresponding curve, and orientation; any two curves with common end-points actually give smoothly isotopic spheres \cite[Proposition 5.1]{Maydanskiy}. Given a sphere associated to a curve in $D_{m+1}$, the action of the Dehn twist about that sphere can be described in terms of a half-twist of $D_{m+1}$, preserving the marked points \cite[Lemma 7.1]{MaydanskiySeidel}. To compute homology classes of spheres when $n$ is odd, one needs to use this together with the Picard-Lefschetz theorem (\cite{Picard,Lefschetz}; see e.g.~\cite[section 2.1]{Arnold6} for a concise account). This
applies to all the Lagrangian spheres considered here; for a fixed sphere $S$, it describes the action of the Dehn twist $\tau_S$ on a class $x$ in the middle homology of the Milnor fibre:
\begin{equation}
\tau_S(x) = x + (-1)^{n(n+1)/2}(x \circ S)S.
\end{equation}

\subsection{Some families of examples}\label{sec:actualexamples}

\subsubsection{Even complex dimension}

There are many examples of pairs of Lagrangian spheres in the $(A_2)$ fibre realising property $S$ that can be constructed using the set-up of \cite{KhovanovSeidel}. Moreover, one can arrange for them to be smoothly isotopic by using the fact that it is enough for them to share end-points. 

\begin{figure}[htb]
\begin{center}
\includegraphics[height=1.5in, width=4in,angle=0]{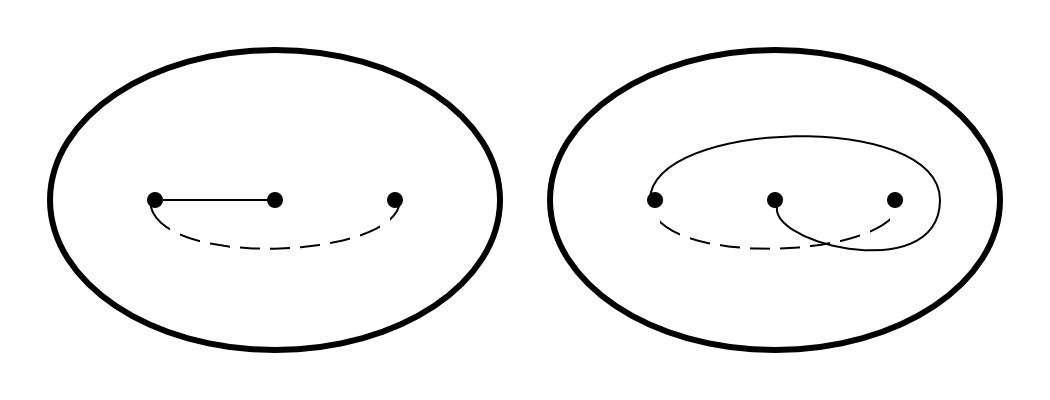}
\caption{Even case: curves corresponding to two homologous spheres in the $(A_2)$ fibre.}
\label{fig:A_2case1}
\end{center}
\end{figure}

Arguably the simplest example is given in figure \ref{fig:A_2case1}; the two full-stroke curves correspond to two isotopic spheres, say $L_0$ and $L_1$. The corresponding curves only intersect at their end-points, so by \cite[Theorem 1.3]{KhovanovSeidel},
$ hf(L_0, L_1) = 2 $.
Consider the sphere associated to the dashed curves, say $L_2$. We have $hf(L_0, L_2) = 1$ and $hf(L_1, L_2) = 3$; thus $L_0$ and $L_1$ are not Fukaya isomorphic. They satisfy the hypothesis of Theorem \ref{th:freegroup}: $(A_2)$ has property $S$.

\begin{lem}
Any degenerate singularity is adjacent to $(A_2)$.
\end{lem}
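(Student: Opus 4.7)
The plan is to reduce, via a standard application of the splitting lemma, to the case of a singularity vanishing to order at least three, and then to produce an $(A_2)$ critical point by Morsification combined with structural facts about the miniversal deformation.

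First, since $[f]$ is degenerate, the rank $r$ of the Hessian of $f$ at the origin satisfies $r \leq n$. The splitting lemma provides a biholomorphically equivalent representative of the form
\[
f(z) = z_0^2 + \cdots + z_{r-1}^2 + g(z_r, \ldots, z_n),
\]
where $g$ has order at least $3$ at the origin and an isolated critical point there; this $g$ is nontrivial because $r \leq n$. Quadratic summands behave well under small perturbations and do not affect the local type at a critical point (suspension), so it is enough to produce arbitrarily small polynomial perturbations $g + q_k$ with an $(A_2)$ critical point somewhere near $0$. Pulling back through the splitting and adding the quadratic block $z_0^2 + \cdots + z_{r-1}^2$ then yields the required perturbations $p_k$ of $f$.

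Next, I would invoke the Morsification theorem: because $g$ vanishes to order $\geq 3$, its Milnor number satisfies $\mu(g) \geq 2$, and there exist arbitrarily small polynomial perturbations that split $g$ into $\mu(g)$ transverse Morse critical points. Embed these perturbations in a miniversal unfolding $G(z,\lambda) = g(z) + \sum_i \lambda_i \phi_i(z)$ with base $\mathbb{C}^{\mu(g)}$. The bifurcation set $D$ is an analytic hypersurface through $0$, stratified according to the type of critical points that appear in the fiber. The open top stratum of $D_{\mathrm{smooth}}$ parametrises deformations with exactly one $(A_2)$ critical point and $\mu(g) - 2$ Morse critical points; since $\mu(g) \geq 2$ this stratum is nonempty, and since $D$ is a positive-dimensional analytic germ containing $0$ the $(A_2)$ stratum accumulates at $0$. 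Hence one obtains arbitrarily small $\lambda$ such that $G(\cdot,\lambda)$ has an $(A_2)$ critical point, giving the claim for $g$, and by suspension for $f$.

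The main obstacle is the last step, namely verifying that the $(A_2)$ stratum has $0$ in its closure in a sufficiently elementary and self-contained way. In one variable this is easy: for $g(z) = z^k$ with $k \geq 3$ one writes down an explicit polynomial perturbation whose derivative acquires a double root near $0$ and $k-3$ simple roots otherwise, producing an $(A_2)$ point and $k-3$ Morse points. The higher-dimensional case reduces to this by choosing coordinates so that $g(z_r, 0, \ldots, 0)$ is a nonzero power series of order $k \geq 3$ and perturbing only in the $z_r$ direction, or, alternatively, by appealing directly to Arnold's tabulated hierarchy of adjacencies (see e.g.~\cite{Arnold6}), where $(A_2)$ is shown to be the minimal non-Morse type, adjacent to every other singularity class.
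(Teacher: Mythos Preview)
Your main argument via the miniversal deformation and the stratification of its discriminant is correct: the facts you invoke (the discriminant is a hypersurface through the origin, its smooth locus is dense and parametrises fibres with a single $A_2$ point plus Morse points) are standard, and together with $\mu(g)\geq 2$ they do give arbitrarily small perturbations with an $(A_2)$ critical point near $0$. After suspension this proves the lemma.

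However, this is a substantially heavier route than the paper's. The paper stays entirely within the parametric Morse lemma: after splitting off the nondegenerate quadratic block exactly as you do, it simply adds $\epsilon(z_1^2+\cdots+z_k^2)$ to the residual part $\phi\in\mathcal{M}^3$. This perturbation keeps the critical point at the origin and forces the Hessian there to have corank exactly one, so the perturbed germ is some $(A_m)$ with $m\geq 2$, which is adjacent to $(A_2)$ via the chain $(A_m)\to(A_{m-1})\to\cdots$. No deformation theory, no discriminant stratification, no need to track a wandering critical point. Your approach buys generality (the same machinery would locate any stratum you like in the adjacency hierarchy), but for the bare statement at hand the paper's two-line trick of ``kill all but one corank direction with an $\epsilon$-small quadratic form'' is both shorter and more self-contained.

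One caution about your fallback arguments: the ``perturb only in the $z_r$ direction'' sketch does not work as written. Adding a polynomial $q(z_r)$ to $g$ does not force the resulting critical points to lie on the $z_r$-axis, nor does it control their type in the remaining variables; without also adding quadratic terms in $z_{r+1},\ldots,z_n$ (which is exactly the paper's move) you cannot conclude that any critical point of $g+q$ is of type $(A_2)$ in the full ambient space. The citation to Arnold is of course fine but is not a proof.
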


\begin{proof}
By the parametric Morse lemma (see e.g.~\cite[Section 1.3]{Arnold6}), a degenerate singularity is equivalent to a function of the form 
\begin{equation}
\phi(z_0, \ldots, z_k) + z_{k+1}^2 + \ldots + z_{n}^2
\end{equation}
where $\phi \in \mathcal{M}^3$, and $\mathcal{M} \subset \C[z_0, z_1, \ldots, z_n]$ is the maximal ideal. For sufficiently small $\e >0$, 
\begin{equation}
\phi(z_0, \ldots, z_k) +\e(z_1^2 + z_2^2+ \ldots + z_k^2) + z_{k+1}^2 + \ldots + z_{n}^2
\end{equation}
has an isolated singularity at zero. Moreover, by construction, it has corank one. The parametric Morse lemma implies that it is an $(A_m)$ singularity, for some $m \geq 3$; in particular, it is adjacent to $(A_3)$ \cite[Section 2.7]{Arnold6}.
\end{proof}

 Thus, by Corollary \ref{th:propertySadjacency}, the Milnor fibre $F$ of a degenerate singularity has property $S$; moreover, the two Lagrangian spheres realising this can be chosen to be smoothly isotopic, as they come from smoothly isotopic spheres in $(A_2)$. This implies that the Dehn twists about them agree as elements of $\pi_0(\Diff^+F)$.

\subsubsection{Odd complex dimension}

One can still find examples of spheres in the $(A_2)$ fibre that satisfy the hypothesis of Theorem \ref{th:freegroup}, e.g. those above. (The calculation of the rank of Floer cohomology is independent of dimension.) 
However, we cannot find examples where they are also homologous. 

\begin{lem}\label{th:A_2nothomologous}
If two curves in $D_3$ correspond to homologous spheres in the $(A_2)$ fibre, then they are isotopic relative to the marked points. 
\end{lem}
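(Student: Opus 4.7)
The first step is to classify isotopy classes, relative to the marked points, of embedded arcs in $D_3$ whose endpoints lie at two distinct marked points and whose interior is disjoint from the marked point set. By a standard arc-diagram argument, such isotopy classes are parametrised by an unordered pair of endpoints together with an integer counting how many times the arc winds around the remaining marked point. Equivalently, after fixing a reference ``straight'' arc in each of the three endpoint classes, every other arc in that class is obtained by applying an integer power of the Dehn twist about a small loop enclosing the third marked point; distinct powers give non-isotopic arcs.

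The second step is to translate this winding parameter into an action on $H_n(F;\Z)$ via Picard--Lefschetz. Let $a$, $b$ denote the spheres corresponding to the straight arcs $p_1p_2$ and $p_2p_3$, so that $\{a,b\}$ is a $\Z$-basis of $H_n(F;\Z)\cong \Z^2$ with $a\cdot b=\pm 1$; since $n$ is odd the intersection form is skew-symmetric, hence $a\cdot a=b\cdot b=0$. A full twist preserving the endpoint set of an arc corresponds, under the Khovanov--Seidel correspondence and \cite[Lemma 7.1]{MaydanskiySeidel}, to the square of the appropriate Dehn twist on the fibre side; by Picard--Lefschetz,
\begin{equation}
\tau_S(x) \;=\; x + (-1)^{n(n+1)/2}(x\cdot S)\,S,
\end{equation}
so the square shifts the class of any sphere not proportional to $S$ by a nonzero even multiple of $S$. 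Iterating over the winding parameter, each family of arcs with fixed endpoints produces an arithmetic progression of homology classes in $H_n(F;\Z)$ with nonzero step vector.

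The third step is to verify that the resulting assignment (isotopy class) $\mapsto$ (homology class, modulo the global sign $\pm$ inherent in unoriented spheres) is injective. Within a single endpoint class this is immediate: the arithmetic progression $v + kw$ with $w\neq 0$ primitive is injective in $k$, and remains so after identifying $v+kw \sim -(v+kw)$. Between the three endpoint classes, the three progressions lie along three distinct primitive directions in $H_n(F;\Q)\cong \Q^2$, namely the lines spanned by $a$, by $b$, and by $a\pm b$; a direct comparison of slopes and offsets shows that no two of these progressions share a class, even modulo sign.

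The main technical obstacle is bookkeeping the signs, orientations, and precise step vectors arising in the Picard--Lefschetz action, which depend on $n\bmod 4$ and on the orientation conventions for the spheres. However, none of these choices affect the conclusion, since what matters is only that each step vector is nonzero and that the three direction vectors remain pairwise non-proportional in $H_n(F;\Q)$. The essential use of \emph{odd} complex dimension enters precisely here: the skew-symmetry of the intersection form forces every nontrivial winding to genuinely change the homology class, whereas in the even-dimensional case the symmetric form with $a\cdot a=\pm 2$ permits different isotopy classes of arcs to yield homologous spheres (as already exploited in the previous subsection).
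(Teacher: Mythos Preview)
Your Step 1 contains a genuine error that undermines the entire argument. You claim that isotopy classes of arcs in $D_3$ with fixed endpoints $\{p_i,p_j\}$ are parametrised by a single integer, obtained by applying powers of the Dehn twist about a loop enclosing only the third marked point $p_k$. But that Dehn twist is \emph{trivial} in the mapping class group of $D_3$: the loop bounds a disc containing exactly one marked point, and by the Alexander trick the mapping class group of a once-marked disc rel boundary is trivial. Concretely, if the loop is small enough to miss a given arc, the twist fixes that arc pointwise; so this twist cannot produce new isotopy classes.

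More seriously, the set of arcs with fixed endpoints is \emph{not} a $\Z$-torsor at all. Passing to the double branched cover (the once-punctured torus, as in the paper's proof), arcs in $D_3$ correspond bijectively to isotopy classes of essential non-boundary-parallel simple closed curves, hence to primitive vectors $(p,q)\in\Z^2$ modulo $\pm 1$; the endpoint pair is determined by $(p,q)\bmod 2$. Thus arcs with a fixed endpoint pair correspond to primitive $(p,q)$ in a fixed congruence class mod $2$, which is far from a single integer parameter. For example, the arcs corresponding to $(1,2)$ and $(3,2)$ have the same endpoints and the same winding number about the third marked point (one checks $(3,2)=\tau_a(1,2)$ and $\tau_a$ is supported away from $p_3$), yet they are not isotopic. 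Your Steps 2 and 3 therefore only treat a one-parameter slice of each endpoint class and miss infinitely many arcs.

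The paper sidesteps this by working directly with the double branched cover and Birman--Hilden theory: one lifts the problem to the once-punctured torus, uses that elements of $\pi_0(\Diff^+T^2)$ are determined by their action on $H_1$, and then descends the resulting isotopy $\Z_2$-equivariantly. The passage to general odd $n$ via Picard--Lefschetz, which you also invoke, is the same as in the paper; the difficulty is entirely in the $n=1$ classification of arcs, where your combinatorial model is too coarse.
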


\begin{proof}
Start, exceptionally, by considering the case where the fibre is one-dimensional.  
Let $\CC_0$ and $\CC_1$ be the two curves in $D_3$, and $s_0$ and $s_1$ be the corresponding circles.
For this case, one could carry out a `by hand' proof, starting by analysing the contributions of various portions of the curve to the homology classes corresponding to $a$ and $b$ (as introduced in Figure \ref{fig:basis}).

Alternatively, let us make use of some classical results (see e.g.~\cite{BirmanHilden, Birman}). The $(A_2)$ fibre, a once-punctured torus, is a double cover of $D_3$, branched over the marked points. Denote it by $A^1_2$. Let $\Diff^+D^3$ be the group of orientation-preserving diffeomorphisms of the disc that preserve the three marked points; $\Diff^+_cD^3$ the subgroup of such diffeomorphisms that are compactly supported; similarly with  $\Diff^+A^1_2$ and $\Diff^+_cA^1_2$. Every element of $\Diff^+ D_3$ lifts to two possible elements of $\Diff^+A^1_2$, differing by the deck action; every element of $\Diff^+_c D_3$ induces a unique element of $\Diff^+_c A^1_2$.

The half-twists in $a$ and $b$ are generators for an action of the braid group $Br_3$ on the base $D_3$; this lifts to an action on the total space, and the half-twists give the Dehn twists in the circles corresponding to $a$ and $b$. We have

\bq \label{eq:isotopyclasses}
\xymatrix{
 PSL_2(\Z) \ar[r]^-{\cong} &\pi_0(\Diff^+D^3) 
& \ar[l]_-{2:1} \pi_0(\Diff^+A^1_2) \ar[r]^-{\cong} 
& \pi_0(\Diff^+T^2) \cong  SL_2(\Z)  
\\
Br_3 \ar[r]^-{\cong} 
& \pi_0(\Diff^+_cD^3) \ar@{->>}[u] \ar[r]^-{\cong} 
& \pi_0(\Diff^+_cA^1_2) \ar@{->>}[u] 
& 
}
\eq
where one can use the presentations
\begin{eqnarray}
Br_3 & \cong &
\langle \s_1, \, \s_2 \, |  \, \s _1 \s_2 \s_1 = \s_2 \s_1 \s_2 \rangle \\
 PSL_2(\Z) & \cong & 
\langle \s_1, \, \s_2 \, |  \, \s _1 \s_2 \s_1 = \s_2 \s_1 \s_2, \, (\s_1 \s_2)^3 =1 \rangle \\
 SL_2(\Z)  & \cong & 
\langle \s_1, \, \s_2 \, |  \, \s _1 \s_2 \s_1 = \s_2 \s_1 \s_2, \, (\s_1 \s_2)^6 =1 \rangle.
\end{eqnarray}
(See, for instance, the book \cite{Birman}. Note that a $\Z_2$ equivariant diffeomorphism which is isotopic to the identity is $\Z_2$ equivariantly isotopic to the identity \cite[Theorem 1]{BirmanHilden}.) In particular, the action of $SL_2(\Z)$ on the homology of $A^1_2$ can be realised by $\Z_2$ equivariant maps.

The curves $C_0$ and $C_1$ can both be obtained from $a$ (or $b$) through a series of forwards or backwards half-twists in $a$ and $b$. Choose a map $f \in \Diff^+D^3$ mapping $C_0$ to $C_1$. This lifts to a $\Z_2$ equivariant $\tilde{f} \in \Diff^+(A^1_2)$, mapping $s_0$ to $s_1$. Now pick a $\Z_2$ equivariant $h \in \Diff^+ A^1_2$ that fixes $s_1$, and such that $h \circ \tilde{f}$ acts as the identity on homology (remember that we are assuming that $s_0$ and $s_1$ are isotopic). Any element of $\pi_0(\Diff^+T^2)$ is uniquely determined by its action on homology; thus $h \circ \tilde{f}$ is isotopic to the identity. By construction, it is a $\Z_2$ equivariant map; by \cite[Theorem 1]{BirmanHilden}, it is $\Z_2$ equivariantly isotopic to the identity. This implies that $C_0$ and $C_1$ are isotopic through elements of $\Diff^+(D_3)$. 

Now suppose the fibre, say $A_2^n$, has odd dimension $n \geq 3$. Fix a curve $\CC$ in $D_3$ with boundary on the marked points, and interior disjoint from them. Let $S_{\CC}$ be the sphere corresponding to $\CC$; say $[S_{\CC}] = (S_a, S_b) \in H_n (A_2^n)$, with respect to the basis corresponding to $a$ and $b$. One can also consider the circle $s_{\CC} \subset A_2^1$ associated to $\CC$, as discussed in the previous paragraph; say $[s_{\CC}] = (s_a, s_b) \in H_1(A_2^1)$, also with respect to the basis corresponding to $a$ and $b$. $\CC$ can be obtained from $a$ (or $b$) through a series of forward and backward half-twists in $a$ and $b$; thus the Picard-Lefschetz theorem implies that
\bq
(S_a, S_b) =  (\pm s_a, \pm s_b).
\eq
Reversing some orientations if needed, this allows us to conclude using the $n=1$ case.
\end{proof}

By \cite[Theorem 1.3]{AbouzaidSmith}, every Lagrangian sphere in $A^n_2$, for $n \geq 3$, as an element of the Fukaya category, is in the braid group orbit of (either of) the Lagrangian spheres corresponding to $a$ or $b$. Thus we expect Lemma \ref{th:A_2nothomologous} to imply that there can be no pair of \emph{homologous} spheres in the $(A_2)$ fibre satisfying the hypothesis of Theorem \ref{th:freegroup}.

\begin{figure}[htb]
\begin{center}
\includegraphics[height=1.5in, width=4in,angle=0]{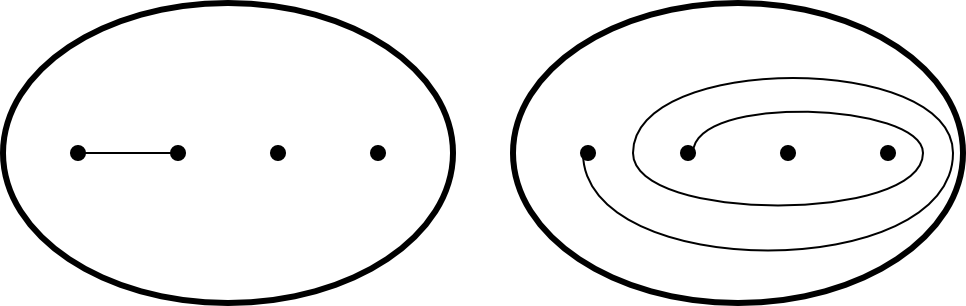}
\caption{Odd case: curves corresponding to two homologous spheres in the $(A_3)$ fibre.}
\label{fig:A_3case2}
\end{center}
\end{figure}

One can nonetheless find pairs of homologous spheres in the $(A_3)$ fibre satisfying the hypothesis of Theorem \ref{th:freegroup}. Figure \ref{fig:A_3case2} gives an example. Let $L_0$ (resp.~$L_1$) be the sphere corresponding to the left (resp.~right)-hand curve. We have $hf(L_0, L_1) = 4$. 
Let $\tau_a$, $\tau_b$ and $\tau_c$ be the Dehn twists in the spheres corresponding to the first, second and third straight-line segments between marked points, similarly to the $(A_2)$ case and figure \ref{fig:basis}. (Note $\tau_a = \tau_{L_0}$.) One can check that 
\bq L_1 = \tau_b \, \tau_c^2 \, \tau_b^2 \, \tau_c^2 \, \tau_b \, L_0 \eq and, using the Picard-Lefschetz theorem, that $L_0$ and $\pm L_1$ are homologous (the sign depends on the $\mathit{mod}$ 4 remainder of $n$). As the $L_i$ have odd real dimension, this implies that $[L_0] \cdot [L_1] = 0$. Moreover, notice that the fibre has real dimension $\geq 6$, and is simply connected. Thus, by the Whitney trick, there is a smooth isotopy of the fibre such that the image of $L_0$ is disjoint from $L_1$. This means that $\tau_{L_0}$ and $\tau_{L_1}$ commute as elements of $\pi_0(\Diff^+)$. 

The same clearly applies to fibres of singularities that are adjacent to $(A_3)$.
What singularites are adjacent to $A_3$? Let us give a family of examples.

\begin{lemma}
Let $\mathcal{M} \subset \C[z_1, z_2]$ be the maximal ideal, generated by $z_1$ and $z_2$, and $f(z_1, z_2)$ be a non-zero element of $\mathcal{M}^4$. Then $f$ is adjacent to $(A_3)$. 
\end{lemma}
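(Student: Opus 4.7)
The plan is to perturb $f$ by $\epsilon(z_1^2 + z_2^4)$ for arbitrarily small $\epsilon \in \C$ and show that the perturbed polynomial represents the $(A_3)$-germ at the origin. The key idea is that $z_1^2 + z_2^4$ is quasi-homogeneous of weighted degree $4$ with weights $(w_1, w_2) = (2, 1)$, and the hypothesis $f \in \mathcal{M}^4$ forces the monomials of $f$ to have weighted degree at least $4$, with equality only for $z_2^4$.

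More concretely, I would first observe that the only monomial $z_1^a z_2^b$ with $a+b \geq 4$ and $(2,1)$-weighted degree equal to $4$ is $z_2^4$: indeed, $2a+b = 4$ together with $a+b \geq 4$ forces $a=0$, $b=4$. Writing $f = c z_2^4 + r$ where every monomial of $r$ has $(2,1)$-weighted degree at least $5$, the perturbed polynomial becomes
\begin{equation*}
F_\epsilon := f + \epsilon(z_1^2 + z_2^4) = \epsilon z_1^2 + (c+\epsilon) z_2^4 + r.
\end{equation*}
For any $\epsilon \neq 0$ with $c+\epsilon \neq 0$ (excluding at most one value of $\epsilon$, so leaving arbitrarily small choices available), the $(2,1)$-weighted-homogeneous principal part of $F_\epsilon$ is $\epsilon z_1^2 + (c+\epsilon) z_2^4$, which is right-equivalent to $z_1^2 + z_2^4$ after a linear rescaling of each variable.

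The final step is to invoke a standard determinacy result for semi-quasi-homogeneous germs: if a quasi-homogeneous polynomial $g_0$ of weighted degree $d$ has isolated critical point at the origin, and its Milnor algebra has no non-zero homogeneous elements of weighted degree $\geq d$, then adding any convergent sum of monomials of weighted degree strictly greater than $d$ to $g_0$ yields a germ right-equivalent to $g_0$ (see, e.g., the discussion of semi-quasi-homogeneous singularities in \cite{Arnold6}). For $g_0 = z_1^2 + z_2^4$, the Milnor algebra is $\C\{z_1, z_2\}/(z_1, z_2^3)$ with basis $\{1, z_2, z_2^2\}$ of $(2,1)$-weighted degrees $0, 1, 2$, all strictly less than $4$. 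Hence the result applies to $F_\epsilon$, giving $[F_\epsilon] = (A_3)$, so $[f]$ is adjacent to $(A_3)$.

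The main obstacle is ensuring that no $(2,1)$-weighted-degree-$4$ monomial other than $z_2^4$ sneaks into $f$. This is precisely what the hypothesis $f \in \mathcal{M}^4$ provides, by ruling out the competing monomials $z_1^2$ and $z_1 z_2^2$. Once this point is pinned down, the remainder of the argument is a citation of the standard quasi-homogeneous determinacy theorem, together with the trivial Milnor-algebra computation for $(A_3)$.
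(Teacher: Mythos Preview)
Your argument is correct and takes a genuinely different route from the paper's. The paper perturbs only by $\beta z_1^2$, observes that the resulting germ has corank one, invokes the parametric Morse lemma to conclude it is some $(A_m)$, and then bounds the Milnor number from below by $3$ via the containment of the Jacobian ideal in $\langle z_1, z_2^3\rangle$; finally it uses the adjacency chain $(A_m)\to(A_{m-1})\to\cdots\to(A_3)$. Your approach instead perturbs by the full normal form $\epsilon(z_1^2+z_2^4)$ and lands directly on $(A_3)$ in one step, using the $(2,1)$-weighted filtration together with the determinacy theorem for semi-quasi-homogeneous germs. The key observation that $f\in\mathcal{M}^4$ forces every monomial of $f$ other than $z_2^4$ to have $(2,1)$-weight at least $5$ is clean and does the real work. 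Your method is more direct and avoids the auxiliary Milnor-number estimate and the $(A_m)$ adjacency chain; the paper's method trades that directness for lighter machinery (parametric Morse lemma and a dimension count, rather than the semi-quasi-homogeneous classification theorem).
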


\begin{proof}
Consider $f_\beta (z_1, z_2) = f(z_1, z_2) + \beta z_1^2$, where $\beta >0$ is an arbitrarily small constant. Considering Hessians, this is a corank-one singularity; the parametric Morse lemma (see e.g.~\cite[Section 1.3]{Arnold6}) implies it is an $(A_m)$ singularity, for some $m$. As $(A_{k+1})$ is adjacent to $(A_k)$, for all $k$, it is enough to show that $m \geq 3$. To do so, we estimate the Milnor number $\mu$ of $f_\beta$. The ideal $\mathcal{I}$ generated by its partial derivatives is contained in 
\bq
\mathcal{M}^3 + \langle z_1 \rangle = \langle z_1, \, z_2^3 \rangle 
\eq
and thus
\bq
\mu  = rk \big( \C[z_1, z_2] / \mathcal{I} \big) \geq rk\big( \C[z_1, z_2] / \langle z_1, \, z_2^3 \rangle \big) = 3.
\eq
\end{proof}
This implies that the stabilizations of the polynomials $f \in \mathcal{M}^4$, of the form $f(z_1, z_2) + z_3^2 + \ldots + z_{n+1}^2$, are adjacent to the $(A_3)$ singularity in the appropriate dimension. 

\begin{remark}
The reader might wonder whether one could find two Lagrangian spheres $L$, $L'$ in an odd complex-dimensional $(A_2)$ fibre with $[L]\cdot [L'] = 0$, despite Lemma \ref{th:A_2nothomologous}, as one could then imitate the Whitney trick argument above to smoothly isotope them apart. Suppose $L$ and $L'$ are two such Lagrangian spheres. Considering the intersection matrix on homology, we must have $k[L] = m[L']$, some integers $k$, $m$. By \cite[Corollary 1.5]{AbouzaidSmith}, we have $k, m = \pm 1$. Thus $L$ and $L'$ are homologous, and Lemma \ref{th:A_2nothomologous} and the remark thereafter apply. 
\end{remark}

%-------------------------------------------------------------------------------------------------------
% BIBLIOGRAPHY

\small


\begin{thebibliography}{10}

\bibitem{Abouzaid11}
M.~Abouzaid, \emph{A topological model for the Fukaya categories of plumbings},
J. Differential Geom. 87 (2011), no. 1, 1–-80. 

\bibitem{AbouzaidSeidel}
M.~Abouzaid and P.~Seidel, \emph{An open string analogue of Viterbo functoriality}, 
Geom. Topol. 14 (2010), no. 2, 627–-718. 

\bibitem{AbouzaidSmith}
M.~Abouzaid and I.~Smith, \emph{Exact Lagrangians in plumbings}, arXiv:1107.0129v1 (2011)

\bibitem{Albers}
P.~Albers, \emph{A Lagrangian Piunikhin-Salamon-Schwarz morphism and two comparison homomorphisms in Floer homology}, 
Int. Math. Res. Not. IMRN 2008, no. 4.

\bibitem{Arnold6}
V.~Arnol'd et al., \emph{Dynamical systems VI}, Encyclopaedia of Mathematical Sciences, 3. Springer-Verlag, Berlin, 1988.

\bibitem{BirmanHilden}
J.~Birman and H.~Hilden, \emph{Isotopies of homeomorphisms of Riemann surfaces and a theorem about Artin's braid group}, 
Bull. Amer. Math. Soc. 78 (1972), 1002–-1004.

\bibitem{Birman}
J.~Birman, \emph{Braids, links, and mapping class groups}, Annals of Mathematics Studies, No. 82. Princeton University Press, 1974.

\bibitem{FLP}
A.~Fathi, F.~Laudenbach, V.~Po\'enaru, \emph{Travaux de Thurston sur les surfaces},  Ast\'erisque, no. 66--67 (1979).

\bibitem{FloerHoferSalamon}
A.~Floer, H.~Hofer, D.~Salamon, \emph{Transversality in elliptic Morse theory for the symplectic action}, 
Duke Math. J. 80 (1995), no. 1, 251-–292. 

\bibitem{Fukaya93}
K.~Fukaya, \emph{Morse homotopy, A∞-category, and Floer homologies},  Proceedings of GARC Workshop on Geometry and Topology '93 (Seoul, 1993), 1–-102, Lecture Notes Ser., 18, Seoul Nat. Univ., Seoul, 1993.

\bibitem{Fukaya97}
K.~Fukaya, \emph{Morse homotopy and its quantization}, AMS/IP Studies in Advanced Math. 2 (1997), 409--440.

\bibitem{FukayaOh}
K.~Fukaya and Y.-G.~Oh, \emph{Zero-loop open strings in the cotangent bundle and Morse homotopy}, 
Asian J. Math. 1 (1997), no. 1, 96--180.

\bibitem{GelfandManin}
S.~I.~Gelfand and Yu.~Manin, \emph{Methods of homological algebra}, Translated from the 1988 Russian original, Springer-Verlag, Berlin, 1996.

\bibitem{Hasegawa}
K.~Lef\`evre-Hasegawa, \emph{Sur les $A_\infty$-cat\'egories}, Th\`ese de Doctorat Univ. Paris 7, 2003.

\bibitem{Ishida}
A.~Ishida, \emph{The structure of subgroup of mapping class groups generated by two Dehn twists}, Proc. Japan Acad. Ser. A Math. Sci. 72 (1996), no. 10, 240–-241. 

\bibitem{KMS}
J.~Kati\'c, D.~Milinkovi\'c, T.~Sim\v cevi\'c, \emph{Isomorphism between Morse and Lagrangian Floer cohomology rings}, Rocky Mountain J. Math. 41 (2011), no. 3, 789-–811.

\bibitem{Keller}
B.~Keller, \emph{Introduction to A-infinity algebras and modules},  Homology Homotopy Appl. 3 (2001), no. 1, 1–-35.

\bibitem{KhovanovSeidel}
M.~Khovanov and P.~Seidel, \emph{Quivers, Floer cohomology and braid group actions}, J. Amer. Math. Soc. 15 (2002), no. 1, 203-–271.

\bibitem{Kontsevich}
M.~Kontsevich, 
\emph{Homological algebra of mirror symmetry},  Proceedings of the International Congress of
Mathematicians (Z\"urich, 1994) , 120–-139.

\bibitem{Krylov}
N.~Krylov, \emph{Relative Mapping Class Group of the Trivial and the Tangent Disk Bundles over the Sphere}, Pure App. Math. Quart.  (2007), vol. 3, no. 3, 631--645.

\bibitem{Lefschetz}
S.~Lefschetz, \emph{L'analyse situs et la geometrie algebrique}, Gauthier--Villars, Paris.

\bibitem{Maydanskiy}
M.~Maydanskiy, \emph{Exotic symplectic manifolds from Lefshetz fibrations}, 	arXiv:0906.2224 (2009).

\bibitem{MaydanskiySeidel}
M.~Maydanskiy and P.~Seidel, \emph{Lefschetz fibrations and exotic symplectic structures on cotangent bundles of spheres}, J. Topol. 3 (2010), no. 1, 157–-180.

\bibitem{MilnorSingularities}
J.~Milnor, \emph{Singular points of complex hypersurfaces}, Annals of Mathematics Studies, no. 61, Princeton University Press, 1968.

\bibitem{Oh}
Y.-G.~Oh, \emph{On the structure of pseudo-holomorphic discs with totally real boundary conditions}, J. Geom. Anal. 7 (1997), no. 2, 305–-327.

\bibitem{Picard}
E.~Picard, G.~Simart, \emph{Theorie des fonctions algebriques de deux variables independantes},  Vol.~1, Gauthier--Villars, Paris

\bibitem{PSS}
S.~Piunikhin, D.~Salamon, and M.~Schwarz, \emph{Symplectic Floer-Donaldson theory and quantum cohomology}, Contact and symplectic geometry (Cambridge, 1994), Publ. Newton Inst., vol. 8, Cambridge Univ. Press, Cambridge (1996), 171-–200.

\bibitem{RobbinSalamon}
J.~Robbin and D.~Salamon, \emph{The Maslov index for paths}, Topology 32 (1993), no. 4, 827–-844. 

\bibitem{Seidel99}
P.~Seidel, \emph{Lagrangians spheres can be symplectically knotted}, J. Differential Geom. 52 (1999), no. 1, 145–-171.

\bibitem{Seidel00}
P.~Seidel,\emph{Graded Lagrangian submanifolds}, 
Bull. Soc. Math. France 128 (2000), no. 1, 103–-149.

\bibitem{Seidel03}
P.~Seidel, \emph{A long exact sequence for symplectic Floer cohomology}, Topology 42 (2003), 1003–-1063.

\bibitem{Seidel08}
P.~Seidel,
 \emph{Fukaya categories and Picard-Lefschetz theory}, Z\"urich Lectures in Advanced Mathematics,
 European Mathematical Society, Z\"urich, 2008.

\bibitem{SeidelThomas}
P.~Seidel and R.~P.~Thomas, \emph{Braid group actions on derived categories of coherent scheaves}, Duke Math. J. 108 (2001), no. 1, 37-–108. 


\end{thebibliography}
\end{document}